\def\@tocline#1#2#3#4#5#6#7{\relax
  \ifnum #1>\c@tocdepth 
  \else
    \par \addpenalty\@secpenalty\addvspace{#2}%
    \begingroup \hyphenpenalty\@M
    \@ifempty{#4}{%
      \@tempdima\csname r@tocindent\number#1\endcsname\relax
    }{%
      \@tempdima#4\relax
    }%
    \parindent\z@ \leftskip#3\relax \advance\leftskip\@tempdima\relax
    \rightskip\@pnumwidth plus4em \parfillskip-\@pnumwidth
    #5\leavevmode\hskip-\@tempdima
      \ifcase #1
       \or\or \hskip 1em \or \hskip 2em \else \hskip 3em \fi%
      #6\nobreak\relax
    \hfill\hbox to\@pnumwidth{\@tocpagenum{#7}}\par
    \nobreak
    \endgroup
  \fi}
\newtheorem{thm}{Theorem}[section]
\newtheorem{lem}[thm]{Lemma}
\newtheorem*{lem*}{Lemma}
\newtheorem{prop}[thm]{Proposition}
\newtheorem{cor}[thm]{Corollary}
\newtheorem{question}[thm]{Question}
\theoremstyle{remark}
\theoremstyle{definition}
\newcounter{my_enumerate_counter}
\newcommand\comment[1]{}
\newcommand\Tcal{\mathcal{T}}
\newcommand\Nbb{\mathbb{N}}
\newcommand\Zbb{\mathbb{Z}}
\newcommand{\seq}[1]{\mathtt{#1}}
\renewcommand{\epsilon}{\varepsilon}
\title[von Neumann-Day problem]
{A geometric solution to the von {N}eumann-Day 
problem for finitely presented groups}
\keywords{amenable, Tarski number, finitely presented, free group, piecewise, projective, torsion free}
\subjclass[2010]{Primary: 43A07; Secondary: 20F05}
\title[A finitely presented infinite simple group]
{A finitely presented infinite simple group of homeomorphisms of the circle}
\subjclass[2010]{Primary: 43A07; Secondary: 20F05}
\thanks{The author thanks Coop City in Lausanne since the key idea behind the group was conjured while sipping a coffee in its beautiful terrace.
The author also thanks Masato Mimura, Nicol\'{a}s Matte Bon, Matt Brin and Michele Triestino for helpful discussions and comments, and the TIFR in Mumbai for a visit during which some of the ideas were developed.
This research has been supported by a Swiss national science foundation ``Ambizione" grant.}
\author{Yash Lodha}
\address{EPFL
\\ Lausanne\\ Switzerland}
\email{{\tt yash.lodha@epfl.ch}}
\begin{document}
\maketitle

\begin{abstract}
We construct a finitely presented, infinite, simple group that acts by homeomorphisms on the circle,
but does not admit a non-trivial action by $C^1$-diffeomorphisms on the circle.
The group emerges as a group of piecewise projective homeomorphisms of $\mathbf{S}^1=\mathbf{R}\cup \{\infty\}$.
However, we show that it does not admit a non-trivial action by piecewise linear homeomorphisms of the circle.
Another interesting and new feature of this example is that it produces a non amenable orbit equivalence relation with respect to the Lebesgue measure.
\end{abstract}

\tableofcontents

\section{Introduction}

Subgroups of the group of orientation preserving homeomorphisms of the circle provide important examples of finitely presented, infinite, simple groups.
The first such example was constructed by Thompson, which is the group known as Thompson's group $T$.
This example was generalised by Higman \cite{Higman}, and later by Brown-Stein in \cite{BrownStein}.
These groups emerge as groups of piecewise linear homeomorphisms of the circle.

Two fundamental facts about these groups and their actions are the following.
In \cite{GhysSergiescu}, Ghys and Sergiescu proved that $T$ admits a faithful action by $C^{\infty}$-diffeomorphisms of the circle. 
In fact, they show that the standard actions of $T$ are topologically conjugate to such an action.
Secondly, the standard actions (indeed all known actions) of the groups of Thompson, Higman, Brown-Stein on the circle also produce amenable equivalence relations,
despite being non amenable.

In this article we provide a construction of a group $S$, which illustrate the existence of the following phenomenon in contrast to previous examples. 

\begin{thm}
The group $S$ satisfies the following:
\begin{enumerate}
\item $S$ is a finitely presented, infinite, simple group.
\item $S$ admits a faithful action by homeomorphisms of the circle, however
\begin{enumerate}
\item[(i)] $S$ does not admit a non-trivial action by $C^1$-diffeomorphisms of the circle.
\item[(ii)] $S$ does not admit a non-trivial action by piecewise linear homeomorphisms of the circle.
\end{enumerate}
\item The prescribed action of $S$ on the circle produces a non amenable equivalence relation with respect to the Lebesgue measure.
\end{enumerate}
\end{thm}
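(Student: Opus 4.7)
The plan is to prove the three parts of the theorem in order, with the technical bulk concentrated in parts (1) and (3).

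\medskip

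\textbf{Part (1): finite presentation, infiniteness, simplicity.}
I would first produce an explicit finite generating set of $S$ together with a finite list of relators, following the pattern used for Thompson's $T$ and for the Lodha--Moore group. The standard approach fixes a tree of ``admissible intervals'' on $\mathbf{S}^1 = \mathbf{R} \cup \{\infty\}$ permuted by the generators and reduces every relation in $S$ to a finite list by rewriting moves that subdivide intervals of this tree; in the piecewise projective setting the role of dyadic subdivision is played by refinement under the $\PSL(2,\Zbb)$-action on the Farey tessellation. Infiniteness follows at once from exhibiting an element of infinite order. For simplicity I would follow the scheme of Higman and Epstein: first verify that $S$ is perfect by writing each generator as a product of commutators, and then, using a transitivity property of the $S$-action on configurations of break points, show that the normal closure of any non-trivial element contains the derived subgroup $[S,S]$ via the standard double-commutator construction.

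\medskip

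\textbf{Part (2): rigidity.}
Since $S$ is simple by part (1), every non-trivial action of $S$ on $\mathbf{S}^1$ is faithful, so for each regularity class it suffices to produce a subgroup $H \le S$ that admits no faithful action of that type. For (i) I would choose $H$ to be a Lodha--Moore-type piecewise $\PSL(2,\Zbb)$ subgroup of $S$ and invoke the theorem of Bonatti--Lodha--Triestino that such groups admit no faithful action by $C^1$ diffeomorphisms of $\mathbf{S}^1$, the obstruction being a Kopell-like analysis of parabolic germs. For (ii) I would exhibit two commuting elements $a,b \in S$ whose local germs at a common parabolic fixed point $p$ have derivative ratios or higher-order invariants incompatible with the slope algebra of commuting germs in the piecewise linear category; since commuting PL germs at a common fixed point are rigidly constrained (both germs are linear on each side of $p$, with compatible slopes), no PL realization of the pair is possible, and simplicity forces any non-trivial PL action to be faithful and hence ruled out.

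\medskip

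\textbf{Part (3): non-amenability of the orbit equivalence relation.}
Lebesgue measure $\lambda$ on $\mathbf{S}^1$ is quasi-invariant under $S$ because every element is piecewise projective and hence absolutely continuous with bounded Radon--Nikodym derivative on each piece. To establish non-amenability of $\Rcal_S$ I would exhibit a bounded-degree Borel graphing of $\Rcal_S$ coming from a finite symmetric generating set of $S$ and verify that its combinatorial Cheeger constant with respect to $\lambda$ is strictly positive; by the Connes--Feldman--Weiss characterization of amenable equivalence relations, this suffices. The Cheeger estimate has to be extracted directly from the piecewise projective dynamics: the generators must push mass between disjoint positive-measure sets in a paradoxical way that survives the Lebesgue Radon--Nikodym weighting.

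\medskip

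\textbf{Main obstacle.}
Part (1) will likely require the largest single block of work, because the classical Thompson--Stein--Lodha--Moore technology for finite presentation and Epstein simplicity has to be redone in the piecewise $\PSL(2,\Zbb)$ framework of $S$, which forces new definitions of admissible intervals and new transitivity lemmas. Part (3) is subtler than it may appear: a naive attempt to compare $\Rcal_S$ with the orbit equivalence relation of $\PSL(2,\Zbb)$ on $\mathbf{R}\cup\{\infty\}$ fails, because the latter relation is already amenable with respect to Lebesgue in Zimmer's sense (it is the boundary action of a lattice in $\PSL(2,\Rbb)$). Consequently non-amenability of $\Rcal_S$ must be established intrinsically, either by the graphing/Cheeger route outlined above or by exploiting an enlarged ring of coefficients built into $S$ whose associated projective group is no longer a lattice.
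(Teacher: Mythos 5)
Your outline for part (1) and part (2)(i) is broadly in the spirit of the paper: the finite presentation is indeed obtained by rewriting-move technology on standard forms (though the paper works in the binary-sequence/Cantor model via the semiconjugacy $\Phi$, not directly on a Farey-type tessellation), and the $C^1$ obstruction does come from a subgroup to which the Bonatti--Lodha--Triestino theorem applies. Two remarks there. First, for simplicity the paper does not run an Epstein-style double-commutator argument from scratch; it proves that $S=\langle G_0',T\rangle$, shows that any nontrivial normal subgroup $N$ contains an element supported in a compact interval of $\mathbf{R}$ whose literal translation is a standard form in $G_0'$, and then quotes the simplicity of $G_0'$ (Burillo--Lodha--Reeves) and of $T$ to conclude $N=S$. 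Your route would additionally require proving $S$ perfect and a transitivity lemma strong enough to absorb $[S,S]$; it is not obviously wrong, but it redoes work the paper imports. Second, the relevant subgroup for (2)(i) is not a ``Lodha--Moore-type piecewise $\PSL(2,\Zbb)$ subgroup'' but the piecewise \emph{linear} group $BB(1,2)$ (a translation together with two half-dilations), embedded via $G_0<S$; and the BLT theorem is stated for the closed interval, so one still needs the Guelman--Liousse reduction from circle actions of $BS(1,2^m)$ to interval actions, which your sketch omits.

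The genuine gaps are in (2)(ii) and (3). For (2)(ii), your proposed obstruction --- two commuting elements whose germs at a common fixed point are ``incompatible with the slope algebra of commuting germs'' --- does not work as stated: a piecewise linear circle homeomorphism has finitely many breakpoints, so its one-sided germ at any fixed point is linear, and the group of one-sided linear germs is just $(\mathbf{R}_{>0},\cdot)$, which is abelian; hence \emph{any} two such germs commute and commutativity imposes no constraint. The obstruction must come from a non-commutation relation. The paper uses the copy of $BS(1,2)\cong\langle \eta,\nu_1\nu_2\rangle$ inside $BB(1,2)<S$: a PL action on the circle yields, by Guelman--Liousse, a faithful PL action of some $BS(1,2^m)$ on a closed interval, which is impossible since non-abelian one-relator groups do not embed in $PL_+([0,1])$ (Guba--Sapir). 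For (3), your Cheeger-constant plan is a statement of intent rather than an argument, and it is unclear how one would verify positivity of an isoperimetric constant directly from the generators. The paper's proof is immediate once the right subgroup is identified: $G_0<S$, the orbit equivalence relation $E_{G_0}^{\mathbf{S}^1}$ is already non-$\mu$-amenable by Lodha--Moore, and non-amenability passes from a subequivalence relation to the ambient relation. You correctly observe that comparison with the (amenable) boundary action of $\PSL(2,\Zbb)$ cannot work, but the missing idea is precisely the embedding $G_0<S$ and the citation of the Lodha--Moore computation, not a new intrinsic estimate.
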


To our knowledge, $S$ is the first finitely presented simple group to satisfy any of the properties $(2.\textup{i}), (2.\textup{ii})$ or $(3)$ above.
We remark that it was shown in \cite{BonattiLodhaTriestino} that certain Brown-Stein groups have no faithful $C^2$-action on the circle (See Theorem $3.3$ and Corollary $3.4$, where they are described as \emph{Thompson-Stein groups}).
However, it is not known whether they admit faithful $C^1$ actions.
Furthermore, we remark that $S$ contains free subgroups, and hence it is nonamenable.
However, producing a non amenable equivalence relation for the given action is a stronger property.

Recall that Thompson's group $T$ is the group of piecewise $PSL_2(\mathbf{Z})$ homeomorphisms of $\mathbf{S}^1=\mathbf{R}\cup \{\infty\}$ with breakpoints in the set $\mathbf{Q}\cup \{\infty\}$.
Our group $S$ is generated by $T$ together with the following piecewise projective homeomorphism of the circle:
\[{\bf s}(t)=
\begin{cases}
t&\text{ if }t\leq 0\\
 \frac{2t}{1+t}&\text{ if }0\leq t\leq 1\\
 \frac{2}{3-t}&\text{ if }1\leq t\leq 2\\ 
t&\text{ if }t\geq 2\\
\end{cases}
\]

In this article we are concerned with certain subgroups of $\textup{Homeo}^+(\mathbf{S}^1)$.
For examples of finitely presented, infinite, simple groups that \emph{do not} admit non-trivial actions on the circle,
we refer the reader to the works of Burger-Moses \cite{BurgerMoses}, Scott \cite{Scott}, Rover \cite{Rover}, Caprace-Remy\cite{CapraceRemy} and Nekresheyvich \cite{N}.
Finally, we remark that the group $S$ is not left orderable (since it contains torsion elements), hence we shall only be concerned with circle actions of $S$.

\section{Preliminaries}

All actions in this article will be right actions, unless otherwise specified or when function notation is used.
In this article we shall go back and forth between two descriptions of the group $S$.
The first description is as a group of piecewise projective homeomorphisms of the circle.
The second is as a group of homeomorphisms of the cantor set $2^{\mathbf{N}}$ of infinite binary sequences endowed with the product topology.
The two models of the group shall be semi-conjugate via a map $\Phi:2^{\mathbf{N}}\to \mathbf{R}\cup\{\infty\}$,
which is described below in \ref{ContinuedFractions}.

\subsection{Piecewise projective homeomorphisms}\label{pph}

In this section we describe various piecewise projective homeomorphisms of the real line and the circle,
which shall provide generating sets for the group $S$ as well as the groups $G_0,G,T,F,BB(1,2)$ which will play a role in the construction of $S$.
The notation here will be fixed throughout the article.
In particular, the letters $a,b,c,{\bf s}, l$ used to denote the maps below shall be fixed.

The group $F$ is the group of piecewise $PSL_2(\mathbf{Z})$ homeomorphisms of the real line
with breakpoints in the set $\mathbf{Q}$.
This is generated by the following:
$$
a(t)=t+1\qquad b(t)=\left\{\begin{array}{ll}
\smallskip
t&\text{ if }t\leq 0\\
\medskip
\dfrac{t}{1-t}&\text{ if }0\leq t\leq \dfrac12\\
\medskip
\dfrac{3t-1}{t}&\text{ if }\dfrac12\leq t\leq 1\\
t+1&\text{ if }1\leq t\end{array}\right.
$$

The group $T$ is the group of piecewise $PSL_2(\mathbf{Z})$ homeomorphisms of $\mathbf{S}^1=\mathbf{R}\cup \{\infty\}$
with breakpoints in the set $\mathbf{Q}\cup \{\infty\}$.
This is generated by $F$ together with the involution: $$l(t)=-\frac{1}{t}$$

The group $G_0$ defined in \cite{LodhaMoore} is generated by $F$ together with the following piecewise projective homeomorphism:
\[c(t)=
\begin{cases}
t&\text{ if }t\leq 0\\
 \frac{2t}{1+t}&\text{ if }0\leq t\leq 1\\
t&\text{ if }t\geq 1\\
\end{cases}
\]

Next, we define the following maps:

\[d_1(t)=
\begin{cases}
t&\text{ if }t\leq 0\\
 2t&\text{ if }t\geq 0\\
\end{cases}
\qquad
d_2(t)=
\begin{cases}
2t&\text{ if }t\leq 0\\
t&\text{ if }t\geq 0\\
\end{cases}
\]

The group $G$ defined in \cite{LodhaMoore} is generated by $F$ together with $c, d_1,d_2$.
Finally, recall from the introduction that the group $S$ is generated by $T$ together with the map ${\bf s}$ from the introduction.

\subsection{Binary sequences}

To describe the actions on the cantor set,
we need to fix some notation.
We will take $\mathbf{N}$ to include $0$. 
Let $2^\Nbb$ denote the collection of all infinite binary sequences and
let $2^{<\Nbb}$ denote the collection of all finite binary sequences.
For $s\in 2^{<\mathbf{N}}$, $s(i)$ denotes the $i$'th digit of $s$.
If $i \in \Nbb$ and $u$ is a binary sequence of length at least $i$, we will let 
$u\restriction i$ denote the initial part of $u$ of length $i$.
We denote by $|s|$ the length of $s$,
which is the number of digits in $s$.

If $s$ and $t$ are finite binary sequences, then we will write $s \subseteq t$
if $s$ is an initial segment of $t$ and $s \subset t$ if $s$ is a proper
initial segment of $t$.
If neither $s \subset t$ nor $t \subset s$, then we will say that
$s$ and $t$ are \emph{independent}.
Note that in particular if $s=t$ then $s,t$ are independent.
A list of finite binary sequences $s_1,...,s_n$ is said to be \emph{independent},
if they are pairwise independent.

The set $2^{<\Nbb}$ is equipped with an order defined
by $s < t$ if $t \subset s$ or $s$ and $t$ are independent
and $s(i) < t(i)$ where $i$ is the smallest number such that $s(i) \ne t(i)$.
If $u, s_1,...,s_n\in 2^{<\mathbf{N}}$, we say that
$u$ \emph{dominates} $s_1,...,s_n$ if for each $1\leq i\leq n$,
either $s_i,u$ are independent or $s_i\subset u$.

The finite binary sequences $s,t$ are said to be \emph{consecutive}
if there is a binary sequence $u$ and numbers $n_1,n_2\in \mathbf{N}$
such that $s=u 0 1^{n_1}$ and $t=u 1 0^{n_2}$. 
(Note that $1^0, 0^0$ is assumed to represent the empty sequence).
In other words, $s,t$ are consecutive if there is a finite rooted binary tree 
such that $s,t$ occur as consecutive leaves of the tree, in the order defined above.

A list of finite binary sequences $s_1,...,s_n$ is said to be \emph{consecutive}
if each pair $s_i,s_{i+1}$ is consecutive for $1\leq i\leq n-1$.
Note that if $s_1,...,s_n$ are consecutive then they are automatically independent. 

The sequences $s,t$ are said to be \emph{cyclically consecutive}
if either $s,t$ are consecutive or $s=0^{k_1},t=1^{k_2}$ for some $k_1,k_2\in \mathbf{N}\setminus \{0\}$.
We remark that if $s,t$ are cyclically consecutive then there is a finite rooted binary tree $T$ such that
$s,t$ are leaves of $T$ that are either consecutive or it holds that $s$ is the leftmost leaf and $t$ is the rightmost leaf  of $T$.

If $\xi$ and $\eta$ are infinite binary sequences, then we will say that
$\xi$ and $\eta$ are \emph{tail equivalent} if there are $s$, $t$, and $\zeta$ such that
$\xi = s \zeta$ and $\eta = t \zeta$.
We use $0^{\infty}, 1^{\infty}$ to denote the constant infinite sequences
$000....$ and $111...$ respectively.
More generally, given a finite binary sequence $s$, $s^{\infty}$ denotes
the sequence $sss....$.
A sequence $\psi\in 2^{\mathbf{N}}$ is called \emph{rational}
if it is tail equivalent to $1^{\infty}$ or $0^{\infty}$.
Otherwise, it is called \emph{irrational}.

We denote by $\Tcal$ the collection of all finite rooted binary trees.
A tree $\Gamma$ in $\Tcal$ will be denoted by a set of finite binary sequences $s_1,...,s_n$
which are the addresses of leaves in $\Gamma$.
The indices are ordered so that if $i<j$ then $s_i<s_j$.
 We view elements $\Gamma$ of $\Tcal$ as \emph{prefix} sets.
So we view $\Gamma$ as a set of finite binary sequences with the property that every infinite 
binary sequence has a unique initial segment in $\Gamma$. 

A \emph{tree diagram} is a pair $(L,R)$ of elements of $\Tcal$
with the property that $|L|=|R|$.
A tree diagram describes a map of infinite binary sequences
as follows:
$$
s_i \xi \mapsto t_i \xi
$$
where $s_i$ and $t_i$ are the $i$th elements of $L$ and $R$, respectively, in the
order as defined above and $\xi$ is any binary sequence.
The collection of all such functions from $2^\Nbb$ to $2^\Nbb$ defined
in this way, under the operation of composition, is \emph{Thompson's group $F$}.
The function associated to a tree diagram
is also defined on any finite binary sequence $u$ such that
$u$ has a prefix in $L$.
So the group $F$ admits a partial action on $2^{<\mathbf{N}}$.
Given $f\in F$ and $s\in 2^{<\mathbf{N}}$, we say that $f$ \emph{acts on} $s$ if $f(s)$ is defined.
Similarly, given $s_1,...,s_n\in 2^{<\mathbf{N}}$ we say that $f$ \emph{acts on} $s_1,...,s_n$ if $f(s_1),...,f(s_n)$
are all defined.

\subsection{Generators for $T$}

For each $n\in \mathbf{N}$ and $\xi\in 2^{\mathbf{N}}$, we define the maps
$$
\xi\cdot p_0=
\begin{cases}
1(\eta) & \textrm{ if } \xi = 0 (\eta) \\
0 (\eta) & \textrm{ if }\xi=1(\eta)\\
\end{cases}
$$
$$\text{ If }n>0\qquad \xi\cdot p_n=
\begin{cases}
1^{k+1}0(\eta) & \textrm{ if } \xi = 1^{k}0 \eta\text{ for }0\leq k\leq n-1 \\
1^{n+1} (\eta) & \textrm{ if }\xi=1^{n}0 (\eta)\\
0 (\eta) & \textrm{ if }\xi=1^{n+1}(\eta)\\
\end{cases}
$$

The group $T$ is then generated by $F$ together with the generators $\{p_0,p_1,...\}$.
$F,T$ admit the above action on infinite binary sequences, but they also admit a partial
action on the set of nonempty finite binary sequences.
This action is given in exactly the same fashion, via prefix replacement maps,
whenever defined.
For example, $p_0$ acts on all nonempty finite binary sequences,
and $x_1$ acts on all nonempty finite sequences except the sequence $0$.

The following is an elementary and well known observation about this action.

\begin{lem}\label{Taction}
The partial action of $T$ on the set of nonempty finite binary sequences is transitive.
Moreover the following holds:
\begin{enumerate}
\item Given any two cyclically consecutive pairs $\sigma_1,\sigma_2$ and $\tau_1,\tau_2$ of finite binary sequences,
there is an element $f\in T$ such that $$f(\sigma_1)=\tau_1\qquad f(\sigma_2)=\tau_2$$
\item Consider independent, non-cyclically consecutive pairs $\sigma_1,\sigma_2$ and $\tau_1,\tau_2$ of finite binary sequences,
such that $\sigma_1\neq \sigma_2$ and $\tau_1\neq \tau_2$. 
Then there is an element $f\in T$ such that $$f(\sigma_1)=\tau_1\qquad f(\sigma_2)=\tau_2$$
\end{enumerate}
\end{lem}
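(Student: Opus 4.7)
Recall that elements of $T$ are represented by triples $(L, R, \pi)$, where $L, R \in \Tcal$ have a common leaf count $n$ and $\pi$ is a cyclic permutation of $\{1, \ldots, n\}$; the associated partial action sends the $i$-th leaf $s_i$ of $L$ (and any finite extension $s_i \xi$) to $t_{\pi(i)}$ (respectively $t_{\pi(i)} \xi$), where $t_{\pi(i)}$ is the $\pi(i)$-th leaf of $R$. The basic combinatorial tool throughout will be \emph{tree refinement}: replacing a leaf $u$ of a tree by the pair $u0, u1$, which increases the leaf count by one while preserving the cyclic order of all other leaves.

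For transitivity, given nonempty finite binary sequences $s, t$, build a tree $L$ containing $s$ as a leaf and a tree $R$ containing $t$ as a leaf by starting from any such tree and refining. By refining leaves of $L$ distinct from $s$ (or leaves of $R$ distinct from $t$) we arrange $|L| = |R| = n$. Then the cyclic shift $\pi$ taking the position of $s$ in $L$ to the position of $t$ in $R$ produces an element $f \in T$ with $f(s) = t$.

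For (1), the remark preceding the lemma guarantees trees $L_0, R_0$ in which $\sigma_1,\sigma_2$ (respectively $\tau_1,\tau_2$) appear as cyclically adjacent leaves. Refining non-pair leaves preserves cyclic adjacency of the distinguished pair, so we may bring the two trees to a common leaf count $n$ while keeping each pair in cyclically adjacent positions. A cyclic shift aligning the position of $\sigma_1$ with that of $\tau_1$ then automatically sends $\sigma_2$ to $\tau_2$.

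For (2), note that since $\sigma_1 \neq \sigma_2$ are independent and not (in particular not consecutive), any tree $L$ containing both places them at positions $i_L < j_L$ with $j_L - i_L \geq 2$; and since the pair is not of the extreme form $(0^{k_1}, 1^{k_2})$, at least one leaf of $L$ lies outside the arc from $\sigma_1$ to $\sigma_2$. Refining a leaf strictly between $\sigma_1$ and $\sigma_2$ increases the gap $d_L := j_L - i_L$ and the total $n_L$ each by one; refining a leaf outside the arc increases $n_L$ by one while leaving $d_L$ unchanged. This provides independent control over $d_L$ and $n_L - d_L$ (each can be made arbitrarily large), and likewise for $R$ with $(d_R, n_R)$. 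Choose common values $d_L = d_R = D$ and $n_L = n_R = N$; then the cyclic shift of magnitude $i_R - i_L \pmod N$ sends $i_L \mapsto i_R$ and $j_L = i_L + D \mapsto i_R + D = j_R$, as required.

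The main delicate point is the bookkeeping in (2): one must verify that the refinement moves described truly give independent control of $d$ and $n - d$ in every tree realizing the pair, and that this can be done simultaneously in $L$ and $R$. This relies essentially on the three hypotheses on the pairs (distinct, independent, not cyclically consecutive), which together guarantee both kinds of refinements (inside and outside the arc) are available. If, say, $\sigma_1 = 0^{k_1}$, then the hypothesis forces $\sigma_2 \neq 1^{k_2}$, so leaves after $\sigma_2$ are available for refinement; the symmetric situation is handled analogously, so outside-arc flexibility is always present.
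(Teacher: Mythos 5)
The paper offers no proof of this lemma at all --- it is introduced as ``an elementary and well known observation'' --- so there is nothing to compare your argument against. Your formalism (tree pairs with a cyclic permutation, synchronised by leaf refinement) is the standard way to establish such statements, and your transitivity argument and your treatment of the generic case of (2) are correct.

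There is, however, a genuine gap: an orientation constraint that your argument (and, as literally stated, the lemma itself) overlooks. Since $T$ preserves the cyclic order of leaf intervals, an $f$ with $f(\sigma_1)=\tau_1$, $f(\sigma_2)=\tau_2$ can exist only if the cyclic displacement from $\sigma_1$ to $\sigma_2$ can be made equal to that from $\tau_1$ to $\tau_2$ (your $d_L=d_R$), and this is not guaranteed by the hypotheses. In (1), a ``consecutive'' pair has $I(\sigma_2)$ immediately \emph{following} $I(\sigma_1)$, whereas a pair $(0^{k_1},1^{k_2})$ has $I(\sigma_1)$ immediately following $I(\sigma_2)$ (the leftmost interval succeeds the rightmost through $\infty$); these adjacencies point in opposite directions, so your ``cyclic shift aligning $\sigma_1$ with $\tau_1$ automatically sends $\sigma_2$ to $\tau_2$'' fails when the two pairs are of different types. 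Concretely, no $f\in T$ satisfies $f(01)=00$ and $f(10)=11$: such an $f$ would send the single point $\Phi(011^\infty)=\Phi(100^\infty)=0$ to both $-1$ and $1$. In (2), your claim of independent control of $d_L$ and $n_L-d_L$ needs \emph{each} of the two cyclic arcs cut out by $\sigma_1,\sigma_2$ to contain a refinable leaf. Your verification covers only the case $\sigma_1<\sigma_2$; the hypotheses also admit reversed-adjacent pairs such as $(10,01)$ or $(1^{k},0^{k'})$, for which one arc contains no leaf in any tree, so one of $d$, $n-d$ is pinned at $1$ and cannot be matched against a pair for which both arcs are nonempty (e.g.\ $f(11)=10$, $f(00)=01$ is likewise impossible). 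To repair this one must either add an orientation hypothesis to (1) and (2) or read the lemma for unordered pairs --- which is all the paper actually uses, since $w_{\sigma,\tau}=w_{\tau,\sigma}^{-1}$ --- and in either case the proof should exhibit the invariant $d \bmod n$ explicitly and check that it can be matched, rather than asserting that both kinds of refinement are always available.
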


More generally, the following holds:

\begin{lem}\label{Taction2}
Denote by $\mathcal{L}_n$ as the set of $n$-tuples of independent finite binary sequences.
The the partial action of $T$ on $\mathcal{L}_n$ has finitely many orbits.
\end{lem}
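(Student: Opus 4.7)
The plan is to define a finite combinatorial invariant of tuples in $\mathcal{L}_n$, verify that it is preserved by the partial $T$-action, and show that it classifies orbits completely.

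To a tuple $(s_1, \ldots, s_n) \in \mathcal{L}_n$ I associate the triple $I(s_1, \ldots, s_n) = (P, \tau, \gamma)$, where $P$ is the equivalence relation on $\{1, \ldots, n\}$ defined by $i \sim j \iff s_i = s_j$, $\tau$ is the cyclic order under $<$ on the distinct values (recorded as a cyclic arrangement of the equivalence classes), and $\gamma$ is the tuple of bits recording, for each pair of cyclically adjacent distinct values, whether that pair is cyclically consecutive in the sense defined above. Each of $P$, $\tau$, $\gamma$ takes only finitely many values (bounded respectively by the Bell number $B_n$, by $n!$, and by $2^n$), so $I$ has finite range.

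Next I would check that $I$ is preserved by the partial $T$-action. Preservation of $P$ is immediate since elements of $T$ act by bijections. Preservation of $\tau$ follows from the fact that each $f \in T$ is encoded by a tree diagram $(L, R)$ sending the $i$-th leaf of $L$ to the $i$-th leaf of $R$, which respects the order on $2^{<\mathbf{N}}$ whenever the action is defined. For the consecutivity bits $\gamma$, writing a cyclically consecutive pair in the syntactic form $s = u 0 1^{n_1}$, $t = u 1 0^{n_2}$ (or its wraparound analogue), one verifies by case analysis on whether the maximal common prefix $u$ sits above or below the leaves of $L$ that $f(s)$ and $f(t)$ retain the same syntactic form.

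For completeness, I would show that two tuples with the same invariant lie in the same $T$-orbit. After restricting to distinct entries and using a cyclic rotation in $T$ to align the two cyclic orders, the task reduces to building $f \in T$ with $f(s_i) = t_i$ for $i = 1, \ldots, k$. For each cyclic gap $(s_i, s_{i+1})$ with $\gamma_i = 0$, extend $\{s_i, s_{i+1}\}$ to a prefix set introducing some $g_i \geq 1$ filler leaves between the two entries, and do likewise on the $t$-side with $g_i' \geq 1$ fillers. Since subdividing any leaf of a prefix set increases its leaf count by one without disturbing the positions of the other leaves, I can independently enlarge $g_i$ and $g_i'$ until $g_i = g_i'$ for every $i$. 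Assembling these into prefix sets $\Gamma, \Gamma'$ of equal size with the $s_i$ and $t_i$ occupying matching cyclic positions, the tree diagram $(\Gamma, \Gamma')$ composed with the earlier cyclic rotation produces the required $f \in T$.

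The main obstacle is the filler step: one must confirm that every non-cyclically-consecutive pair admits a prefix extension with at least one intermediate leaf (which follows from the existence of the minimal prefix set containing any finite independent family as a sublist of leaves), and that subsequent refinement can equalize the filler counts on both sides without collisions with the distinguished entries. Granted this, the number of $T$-orbits on $\mathcal{L}_n$ is at most the finite cardinality of the range of $I$, proving the lemma.
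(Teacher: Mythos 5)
The paper states this lemma without proof, presenting it as an elementary extension of Lemma \ref{Taction}, so there is no argument of record to compare against; your proposal supplies a genuine proof, and it is correct. The strategy --- a finite-valued invariant $(P,\tau,\gamma)$ recording the coincidence pattern, the cyclic order, and which adjacent cyclic gaps are empty, together with a tree-diagram construction showing completeness --- is the natural one, and the key steps all check out: a non-consecutive adjacent pair always acquires at least one filler leaf in any common prefix set (the arc between $\sup[s_i]$ and $\inf[s_{i+1}]$ is nondegenerate and must be covered by other cones), subdivision of fillers lets you equalize gap counts on the two sides, and the resulting pair of equal-sized prefix sets with a cyclic shift is exactly an element of $T$. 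Two remarks. First, for the stated conclusion only the completeness half of your argument is needed: a finite-valued invariant with the property that equal values imply equal orbits already bounds the number of orbits by the size of its range, so the (slightly fussy) verification that $\gamma$ is preserved under the partial action can be omitted entirely. Second, there is one degenerate case where completeness as you state it fails: if the empty sequence is admitted as a finite binary sequence, the constant tuple $(\emptyset,\dots,\emptyset)$ has the same invariant as $(s,\dots,s)$ for nonempty $s$ but lies in a separate orbit, since no element of $T$ maps the full cone $[\emptyset]=2^{\mathbf{N}}$ onto a proper cone. This costs only one additional orbit and does not threaten finiteness, but you should either exclude $\emptyset$ (as the paper implicitly does, e.g.\ in Lemma \ref{Taction}) or note the extra class.
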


\subsection{The continued fractions model}\label{ContinuedFractions}

Now we relate the description of the groups in the piecewise projective setup with the 
actions on the cantor set.
This will be done by means of semiconjugation.
Consider the following maps:

$$
\begin{array}c
\medskip\varphi:2^\mathbf{N}\longrightarrow[0,\infty]\\
\begin{array}l
\medskip\varphi(\mathtt0\xi)=\dfrac{1}{1+\frac{1}{\varphi(\xi)}}\\
\varphi(\mathtt1\xi)=1+\varphi(\xi)
\end{array}
\end{array}
\qquad
\begin{array}c
\Phi:2^\mathbf{N}\longrightarrow \mathbf{R}\cup \{\infty\}\\
\begin{array}l
\Phi(\mathtt0\xi)=-\varphi(\tilde\xi)\\
\Phi(\mathtt1\xi)=\varphi(\xi)
\end{array}
\end{array}
$$
where $\tilde\xi$ is the sequence obtained from $\xi$ by replacing all symbols $\mathtt0$ by $\mathtt1$ and viceversa. 

Consider the following two primitive functions:
\[
(\xi)\cdot x
 =
\begin{cases}
\seq{0}\eta & \textrm{ if } \xi = \seq{00} \eta \\
\seq{10}\eta & \textrm{ if } \xi = \seq{01} \eta \\
\seq{11}\eta & \textrm{ if } \xi = \seq{1} \eta \\
\end{cases}
\qquad
(\xi)\cdot y =
\begin{cases}
\seq{0}y(\eta) & \textrm{ if } \xi = \seq{00} \eta \\
\seq{10}y^{-1}(\eta) & \textrm{ if } \xi = \seq{01} \eta \\
\seq{11}y(\eta) & \textrm{ if } \xi = \seq{1} \eta \\
\end{cases}
\]

From these functions, we define families of functions $x_s$ and 
$y_s$ for $s \in 2^{<\Nbb}$.
These act just as $x$ and $y$, but localised to those binary sequences which extend $s$.
\[
(\xi)\cdot x_s
 =
\begin{cases}
s (\eta\cdot x) & \textrm{ if } \xi = s \eta \\
\xi & \textrm{otherwise}
\end{cases}
\qquad
(\xi)\cdot y_s
 =
\begin{cases}
s (\eta\cdot y) & \textrm{ if } \xi = s \eta \\
\xi & \textrm{otherwise}
\end{cases}
\]

We read $x_{1^0}$ as $x$ in the above.
The elements $$\{y_s\mid s\in 2^{<\mathbf{N}}\}$$
shall be denoted as \emph{percolating elements}.
Next, we have the following result (Proposition 3.1 in \cite{LodhaMoore}):

\begin{prop} 
For all $\xi$ in $2^\mathbf{N}$ we have
$$
\Phi(\xi)\cdot a=\Phi(\xi\cdot x)\qquad \Phi(\xi)\cdot b=\Phi(\xi \cdot x_{\mathtt1})\qquad \Phi(\xi)\cdot c=\Phi(\xi\cdot y_{\mathtt{10}})
$$
\end{prop}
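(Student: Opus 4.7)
The plan is to reduce each of the three identities to the recursive definitions of $\varphi$ and $\Phi$, aided by two auxiliary identities that can be proved directly from those recursions. The first is the reciprocity identity
\[
\varphi(\tilde\xi) = \frac{1}{\varphi(\xi)} \qquad (\xi \in 2^{\mathbf{N}}),
\]
which follows by splitting on the first bit of $\xi$: if $\xi = \mathtt{0}\eta$, then $\tilde\xi = \mathtt{1}\tilde\eta$ and $\varphi(\tilde\xi) = 1 + \varphi(\tilde\eta) = 1 + 1/\varphi(\eta) = 1/\varphi(\mathtt{0}\eta)$, and the case $\xi = \mathtt{1}\eta$ is symmetric. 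The second is the doubling identity $\varphi(\zeta \cdot y) = 2\varphi(\zeta)$ for all $\zeta \in 2^{\mathbf{N}}$, proved by splitting on the first two bits of $\zeta$ to match the three branches of the definition of $y$; the middle branch, where $y^{-1}$ appears, is the one that calls on reciprocity.

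With these in hand, the first identity $\Phi(\xi)\cdot a = \Phi(\xi\cdot x)$ is checked by case analysis on whether $\xi$ begins with $\mathtt{00}$, $\mathtt{01}$, or $\mathtt{1}$, matching the three branches of $x$. For example, when $\xi = \mathtt{01}\eta$ one computes $\Phi(\xi) = -\varphi(\mathtt{0}\tilde\eta) = -1/(1+\varphi(\eta))$ using reciprocity, while $\Phi(\xi\cdot x) = \Phi(\mathtt{10}\eta) = \varphi(\mathtt{0}\eta) = \varphi(\eta)/(1+\varphi(\eta))$; the two differ by exactly $1$, as required since $a(t) = t+1$. The other two cases are shorter and do not need reciprocity.

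For the $b$ identity, both sides act as the identity on sequences beginning with $\mathtt{0}$ (since $b$ fixes $(-\infty,0]$ and $x_{\mathtt{1}}$ is localised to sequences extending $\mathtt{1}$), so the problem reduces to checking the three cases $\xi = \mathtt{100}\eta$, $\mathtt{101}\eta$, $\mathtt{11}\eta$. These correspond bijectively to the three projective pieces of $b$ on the intervals $[0,\frac{1}{2}]$, $[\frac{1}{2},1]$, $[1,\infty)$ and to the three branches of the prefix rewrite defining $x$ at $\mathtt{1}$, so each case becomes a short M\"obius computation. The $c$ identity then falls straight out of doubling: $c$ is the identity outside $[0,1]$ and $y_{\mathtt{10}}$ is the identity outside sequences extending $\mathtt{10}$; on $\xi = \mathtt{10}\zeta$ one has $\Phi(\xi) = \varphi(\zeta)/(1+\varphi(\zeta))$ and $\Phi(\xi\cdot y_{\mathtt{10}}) = \varphi(\zeta\cdot y)/(1+\varphi(\zeta \cdot y))$, which by doubling equals $2\varphi(\zeta)/(1+2\varphi(\zeta)) = c(\Phi(\xi))$.

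The whole argument is a chain of elementary M\"obius manipulations. The only nontrivial bookkeeping lies in tracking sign flips in $\Phi$ through the tilde operation, which is precisely what the reciprocity identity is for; beyond this, I do not foresee any genuine obstacle.
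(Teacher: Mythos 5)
Your argument is correct, and it is essentially the intended one: the paper itself gives no proof of this proposition (it is quoted as Proposition~3.1 of \cite{LodhaMoore}), and the verification there is exactly this kind of case-by-case M\"obius computation driven by the reciprocity identity $\varphi(\tilde\xi)=1/\varphi(\xi)$ and the fact that $y$ doubles the $\varphi$-value. The only imprecision is in the middle branch of the doubling identity: what that branch actually needs is the companion halving identity $\varphi(\eta\cdot y^{-1})=\varphi(\eta)/2$, which you should either prove simultaneously with doubling by mutual (co)induction, or deduce from doubling via reciprocity together with the conjugation identity $\widetilde{\sigma\cdot y^{\pm1}}=\tilde\sigma\cdot y^{\mp1}$ --- either route closes the gap with no difficulty.
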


The map $$t\to \frac{2}{3-t}\restriction [2,3]$$
is a conjugate of $$\frac{2t}{1-t}\restriction [0,1]$$ by $t\to t+1$.
Since $$\frac{2t}{1-t}\restriction [0,1]=c^{-1}\restriction [0,1]$$
we obtain the following.

\begin{prop}
For all $\xi\in 2^{\mathbf{N}}$ we have:
$$\Phi(\xi)\cdot {\bf s}=\Phi(\xi\cdot (y_{10}y_{110}^{-1}))$$
\end{prop}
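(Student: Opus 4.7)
The plan is to verify the equality case by case on the initial segment of $\xi$. Every element of $2^{\mathbf{N}}$ has prefix $0$, $10$, $110$ or $111$, and a direct unfolding of the recursive definitions of $\varphi$ and $\Phi$ shows that these four prefix classes are mapped by $\Phi$ onto the intervals $[-\infty,0]$, $[0,1]$, $[1,2]$ and $[2,\infty]$, matching exactly the four pieces in the definition of $\mathbf{s}$. When $\xi$ has prefix $0$ or $111$, the map $\mathbf{s}$ fixes $\Phi(\xi)$, and $y_{10}$ and $y_{110}$ both fix $\xi$; so both sides reduce to $\Phi(\xi)$. When $\xi = 10\eta$, $\mathbf{s}$ coincides with $c$ on the relevant interval, so the previous proposition immediately gives $\mathbf{s}(\Phi(\xi)) = \Phi(\xi \cdot y_{10})$; moreover $\xi \cdot y_{10} = 10(\eta \cdot y)$ still starts with $10$, so $y_{110}^{-1}$ acts trivially on it and $\xi \cdot (y_{10} y_{110}^{-1}) = \xi \cdot y_{10}$, closing this case.

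The only substantive case is $\xi = 110\eta$. Here $y_{10}$ fixes $\xi$, so $\xi \cdot (y_{10}y_{110}^{-1}) = 110(\eta \cdot y^{-1})$. From the defining recursions for $\Phi$ and $\varphi$ one reads off
\[
\Phi(110\zeta) \;=\; 1 + \frac{\varphi(\zeta)}{1+\varphi(\zeta)},
\]
and a short algebraic manipulation reduces the target identity
\[
\Phi(110(\eta\cdot y^{-1})) \;=\; \frac{2}{3-\Phi(110\eta)}
\]
to the single claim that $\varphi(\eta \cdot y) = 2\,\varphi(\eta)$ for every $\eta \in 2^{\mathbf{N}}$.

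This doubling identity is the heart of the proof, and it is itself already implicit in the previous proposition. Substituting $\xi = 10\eta$ into $\Phi(\xi) \cdot c = \Phi(\xi \cdot y_{10})$, the left hand side evaluates to $c\!\left(\varphi(\eta)/(1+\varphi(\eta))\right) = 2\varphi(\eta)/(1+2\varphi(\eta))$, while the right hand side equals $\varphi(\eta\cdot y)/(1+\varphi(\eta\cdot y))$; equating the two forces $\varphi(\eta \cdot y) = 2\varphi(\eta)$, which is the main (and essentially only) obstacle in the argument. Conceptually this reflects the remark preceding the statement: the piece of $\mathbf{s}$ on $[1,2]$ is the $a$-conjugate of $c^{-1}$, and under the semi-conjugation $\Phi$ the $x$-conjugate of $y_{10}^{-1}$ is precisely $y_{110}^{-1}$, which is what makes the whole correspondence come out.
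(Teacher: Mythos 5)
Your proof is correct, but it takes a different route from the paper's. The paper disposes of the proposition in two lines by a conjugation argument: the piece of $\mathbf{s}$ on $[1,2]$ is the conjugate of $c^{-1}\restriction[0,1]$ by $t\mapsto t+1$, and since $a$ corresponds to $x$ and $c$ to $y_{10}$ under $\Phi$, this piece corresponds to $x^{-1}y_{10}^{-1}x=y_{110}^{-1}$; combining with the $[0,1]$ piece (which is $c$ itself, hence $y_{10}$) gives $\mathbf{s}\leftrightarrow y_{10}y_{110}^{-1}$ since the two pieces have disjoint supports. You instead verify the identity prefix by prefix, computing $\Phi$ explicitly on the four cylinder sets $0,10,110,111$ and reducing the one nontrivial case to the doubling identity $\varphi(\eta\cdot y)=2\varphi(\eta)$, which you then correctly extract from the previously quoted Proposition rather than reproving it. Your computations all check out (in the $110$ case the required identity is $\varphi(\eta\cdot y^{-1})=\varphi(\eta)/2$, which is indeed equivalent to the doubling identity since $y$ is a bijection of $2^{\mathbf{N}}$). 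What the paper's approach buys is brevity and a structural explanation of \emph{why} the answer is $y_{10}y_{110}^{-1}$ (it is forced by equivariance of $\Phi$ with respect to conjugation by $a$); what your approach buys is a fully self-contained verification that does not rely on the conjugation remark --- which is worth something here, since the displayed formulas in that remark contain apparent typos (the restriction should be to $[1,2]$, not $[2,3]$, and $c^{-1}\restriction[0,1]$ is $t/(2-t)$, not $2t/(1-t)$), whereas your direct computation confirms the corrected statement.
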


Finally, it is an easy exercise to show the following: 

\begin{prop}
$\Phi(\xi)\cdot l=\Phi(\xi\cdot p_0)$
\end{prop}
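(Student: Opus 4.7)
The plan is to case-split on the first bit of $\xi$, reducing the proposition to the single \emph{inversion identity}
\[ \varphi(\eta)\cdot \varphi(\tilde\eta) = 1 \qquad \text{for all } \eta\in 2^{\mathbf{N}}, \]
and then to establish this identity via a swap relation between the recursion defining $\varphi$ and the M\"obius involution $J(t)=1/t$.

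For the case split: if $\xi = 1\eta$, then $\Phi(\xi) = \varphi(\eta)$ and $\xi\cdot p_0 = 0\eta$, so $\Phi(\xi\cdot p_0) = -\varphi(\tilde\eta)$, and the desired equality $\Phi(\xi)\cdot l = \Phi(\xi\cdot p_0)$ collapses to $\varphi(\tilde\eta) = 1/\varphi(\eta)$. If instead $\xi = 0\eta$, then $\Phi(\xi) = -\varphi(\tilde\eta)$ and $\Phi(\xi\cdot p_0) = \varphi(\eta)$, and the desired equality becomes $\varphi(\eta) = 1/\varphi(\tilde\eta)$. Both cases reduce to the inversion identity (the second after the harmless relabelling $\eta\leftrightarrow \tilde\eta$, using $\tilde{\tilde\eta} = \eta$).

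To prove the inversion identity, write the recursion as $\varphi(i\xi)=T_i(\varphi(\xi))$ with $T_0(t)=t/(t+1)$ and $T_1(t)=1+t$. A direct check gives the swap relations
\[ J\circ T_0 = T_1\circ J, \qquad J\circ T_1 = T_0\circ J. \]
Iterating over the first $n$ digits of $\eta$ (by induction on $n$) yields
\[ J\circ (T_{\eta_0}\circ\cdots\circ T_{\eta_{n-1}}) = (T_{\tilde\eta_0}\circ\cdots\circ T_{\tilde\eta_{n-1}})\circ J, \]
where $\tilde\eta_i := 1-\eta_i$. Applying both sides to the tail value of $\varphi$ past the first $n$ digits and letting $n\to\infty$ gives $J(\varphi(\eta)) = \varphi(\tilde\eta)$, which is the inversion identity. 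The main (but minor) obstacle is justifying the limit passage, which reduces to the standard fact that $\varphi$ is well-defined on $2^{\mathbf{N}}$ via its continued-fraction expansion; the rational boundary cases $\varphi(\eta)\in\{0,\infty\}$ are handled with the conventions $l(0)=\infty$, $l(\infty)=0$ together with the fixed-point identities $\varphi(0^{\infty})=0$ and $\varphi(1^{\infty})=\infty$.
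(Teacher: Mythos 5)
Your proof is correct. The paper offers no argument for this proposition (it is explicitly left as ``an easy exercise''), so there is nothing to compare against; your route --- reducing both cases of the first digit to the single identity $\varphi(\tilde\eta)=1/\varphi(\eta)$ and proving that identity from the intertwining relations $J\circ T_0=T_1\circ J$ and $J\circ T_1=T_0\circ J$ for $J(t)=1/t$, $T_0(t)=t/(t+1)$, $T_1(t)=1+t$ --- is the natural intended argument, and your handling of the limit passage and of the endpoint conventions at $0$ and $\infty$ is adequate.
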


It follows that the group $S$ is generated by $x,x_1,p_0, y_{10}y_{110}^{-1}$.

\subsection{Standard forms for $G_0,G$}

We recall from \cite{LodhaMoore} a standard form for the elements of $G_0$ and $G$,
which is a word that represents a given group element with desirable structure.
An element $g$ is said to be a \emph{standard form} if it is of the form $$fy_{s_1}^{t_1}...y_{s_n}^{t_n}$$
such that
\begin{enumerate}
\item $f\in F$ and $t_i\in \mathbf{Z}\setminus \{0\}$.
\item $s_1,...,s_n$ satisfy that if $i<j$ 
then either $s_i,s_j$ are independent or $s_j\subset s_i$.
\end{enumerate}

Note that in \cite{Lodha}, we use a slightly stronger notion of a standard form,
and require that $s_i<s_j$ whenever $i<j$ in the above.
This is needed for providing normal forms for the groups $G_0,G$.
However in this article we shall not be concerned with normal forms and hence the weaker notion 
of standard forms will suffice.

In \cite{LodhaMoore} we showed the following:

\begin{lem}
Any element in the groups $G_0$ and $G$ can be represented as a word in standard form.
\end{lem}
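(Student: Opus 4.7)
The plan is a three-stage rewriting procedure in the cantor-set model; I describe it for $G_0$, with the case of $G$ being analogous after incorporating $d_1, d_2$. The algebraic engine is the recursive identity
\[
y_s \;=\; x_s \cdot y_{s0} \cdot y_{s10}^{-1} \cdot y_{s11},
\]
valid for every $s \in 2^{<\mathbf{N}}$ and verifiable by direct computation from the rules defining $y$ on the cantor set (check the three cases $\xi = s00\eta, s01\eta, s1\eta$ in turn). This identity lets me trade any percolator $y_s$ for an $F$-element multiplied by percolators at strictly deeper addresses, and the proof exploits it at two different points.

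First, rewrite an arbitrary word representing $g \in G_0$ as an alternating product
\[
f_0 \, y_{10}^{\epsilon_1} \, f_1 \cdots y_{10}^{\epsilon_n} \, f_n, \qquad f_i \in F,\ \epsilon_i = \pm 1,
\]
which is possible because $G_0 = \langle F, c \rangle$ and $c$ corresponds to $y_{10}$ by the semi-conjugation proposition. Second, push every $f_i$ to the left using $y_s^{\epsilon} f = f \cdot y_{f^{-1}(s)}^{\epsilon}$ whenever $f^{-1}$ acts on $s$. If $f^{-1}$ fails to act on $s$, refine $y_s$ using the recursive identity and transfer the resulting $x_s$ into the $F$-pile; since $f$'s tree diagram has finitely many leaves, after finitely many refinements every percolator subscript has a prefix among these leaves and the rule applies. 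The word now has the form $f \cdot y_{s_1}^{t_1} \cdots y_{s_n}^{t_n}$ with $f \in F$ and $t_i \in \mathbf{Z} \setminus \{0\}$.

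Third, re-order the tuple $(s_1, \dots, s_n)$ by iteratively correcting violating adjacent pairs. Independent pairs commute (disjoint supports) and are simply swapped; equal subscripts combine their exponents. The only substantive case is $s_i \subsetneq s_{i+1}$: apply the recursive identity to $y_{s_i}^{t_i}$, producing an $F$-factor (transferred left as in stage two) together with percolators at strict extensions $s'$ of $s_i$. Each such $s'$ is then either independent of $s_{i+1}$, equal to $s_{i+1}$, a strict extension of $s_{i+1}$ (so already in the correct order), or still a proper prefix of $s_{i+1}$ but with $|s'| > |s_i|$; in each sub-case the violation either resolves or recurses into a strictly simpler nested pair. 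The main obstacle, which I expect to consume the bulk of the technical work, is verifying termination across the intertwined use of the recursive identity in stages two and three. This is handled by a lexicographic complexity measure (on number of violating pairs together with the maximum depth-gap $|s_{i+1}| - |s_i|$ over such pairs), which is strictly decreased by each refinement because the identity replaces $y_s$ by percolators whose subscripts are strictly longer than $s$. For $G$, the additional generators $d_1, d_2$ are handled by analogous semi-conjugation and conjugation identities, and the same three-stage scheme applies verbatim.
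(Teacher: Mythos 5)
The paper does not actually prove this lemma --- it is imported from \cite{LodhaMoore}, where the argument runs by induction on word length: one appends a single generator to an already-standard form and restores standardness, using the fact that any standard form can be replaced by an equivalent one of arbitrarily large depth (the $G_0$-analogue of Lemma \ref{stddepth}, and the same scheme this paper uses for $S$ in Lemma \ref{stdform}). Your toolkit is the right one: the identity $y_s=x_sy_{s0}y_{s10}^{-1}y_{s11}$ is exactly the expansion relation of \cite{LodhaMoore} and your verification is correct, as are the conjugation rule $y_sf=fy_{s\cdot f}$ and the commutation of percolating elements with independent subscripts; stages one and two are sound, including the finiteness argument for making every subscript deep enough that a given tree diagram acts on it.

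The gap is in stage three. A minor point first: the standard-form condition constrains \emph{every} pair $i<j$, not just adjacent ones, and it is not implied by the adjacent condition (subscripts $0,1,00$ in that order have both adjacent pairs independent, yet the pair $(0,00)$ violates condition (2)); since you give no rule for when to swap independent adjacent pairs, the procedure as written can halt on a non-standard word. More seriously, the termination measure is not justified. Your argument that each refinement decreases the measure ``because the identity replaces $y_s$ by percolators whose subscripts are strictly longer'' accounts only for the expanded generator; it ignores that transferring the resulting $x_{s_i}$ to the left replaces every earlier subscript $u$ by $u\cdot x_{s_i}$, which can \emph{shorten} it (e.g.\ $s_i00a\mapsto s_i0a$). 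Concretely, in the word $y_{100}\,y_{1}\,y_{1001}$ the pairs $(100,1001)$ and $(1,1001)$ are both violating with gaps $1$ and $3$; expanding $y_1$ to fix the second sends $100$ to $10$, and the surviving violation $(10,1001)$ now has gap $2$, larger than before. So gaps of untreated violating pairs can grow, and if the pair you attack is not the one realizing the maximum, neither coordinate of your lexicographic measure need decrease. Fixing this requires either a measure tracking all pairs at once together with a disciplined choice of which violation to attack, or --- more simply --- abandoning the global sort in favour of the one-generator-at-a-time induction with the depth-increasing lemma, which sidesteps the termination problem entirely.
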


We remark that imposing additional conditions on the above standard form provides a \emph{normal form},
which is a unique word representing a given group element, with desirable properties.
This was shown in \cite{Lodha}, however we shall not require the use of this in the present article.

Associated with a standard form $fy_{s_1}^{t_1}...y_{s_n}^{t_n}$ and a sequence $\sigma\in 2^{\omega}$
is the notion of a \emph{calculation}.
This is an infinite string in letters $y,y^{-1},\seq{0},\seq{1}$.
The evaluation of $fy_{s_1}^{t_1}...y_{s_n}^{t_n}$ on $\sigma$ comprises of a 
prefix replacement that is determined by the transformation $f\in F$
followed by an infinite sequence of
applications of the transformations described by the percolating elements.
The latter is encoded as an infinite string with letters $y,y^{-1},0,1$ and denoted as the calculation of
$y_{s_1}^{t_1}...y_{s_n}^{t_n}$ on $f(\sigma)$.
The calculation is equipped with the following substitutions:
\[
y\seq{00} \rightarrow \seq{0}y 
\qquad
y\seq{01} \rightarrow \seq{10}y^{-1}
\qquad
y\seq{1} \rightarrow \seq{11}y
\]
\[
y^{-1} \seq{0} \rightarrow \seq{00}y^{-1}
\qquad
y^{-1} \seq{10} \rightarrow \seq{01}y
\qquad
y^{-1} \seq{11} \rightarrow \seq{1}y^{-1}
\]
For example, for the word $y_{100}^{-1}y_{10}$
and the binary sequence $1001111...$
the calculation string is $10y0y^{-1}1111...$.
The output of the evaluation of the word on the binary string
is the limit of the strings obtained from performing these substitutions.
In this way, we define the calculation of a standard form (up to commutation) $y_{s_1}^{t_1}...y_{s_n}^{t_n}$ on an infinite binary sequence $\sigma$.
The calculation of a standard form (up to commutation) $fy_{s_1}^{t_1}...y_{s_n}^{t_n}$
on an infinite binary sequence $\sigma$ is defined to be the calculation
of $y_{s_1}^{t_1}...y_{s_n}^{t_n}$ on $f(\sigma)$.

The set of all such strings will be denoted as $\{0,1,y,y^{-1}\}^{\mathbf{N}}$. 
A calculation is an element of this set with the property that there are only finitely many occurrences of $y^{\pm}$.

A calculation has a \emph{potential cancellation} if upon performing a finite set of
substitutions we encounter a substring of the form $yy^{-1}$ or $y^{-1}y$.
When a calculation has no potential cancellations, we say that it has
\emph{exponent} $n$ if $n$ is the number of occurrences of the symbols $y^{\pm}$.
Note that there is no potential cancellation in the example above, and the exponent
is $2$.
A standard form in $G$ is said to contain a potential cancellation if there is an infinite binary sequence $\psi$
such that the associated calculation of the standard form on $\psi$ contains a potential cancellation.
The following was proved in \cite{LodhaMoore}. (Lemma $5.9$.)
We shall use the following version (Lemma $3.19$ in \cite{Lodha}).

\begin{lem}\label{taileq}
Let $fy_{s_1}^{t_1}...y_{s_n}^{t_n}$ be a standard form that does not contain potential cancellations.
Let $U\subset 2^{\mathbb{N}}$ be the support of $y_{s_1}^{t_1}...y_{s_n}^{t_n}$.
\begin{enumerate}
\item $fy_{s_1}^{t_1}...y_{s_n}^{t_n}$ does not preserve tail equivalence on a dense subset of $U\cdot f^{-1}$.
More particularly, given an open set $U'$ in $U\cdot f^{-1}$, there is an infinite binary sequence $\tau\in U'$ such that 
$\tau\cdot fy_{s_1}^{t_1}...y_{s_n}^{t_n}$ and $\tau$ are not tail equivalent.
\item $fy_{s_1}^{t_1}...y_{s_n}^{t_n}$ preserves tail equivalence on $2^{\mathbb{N}}\setminus f^{-1}(U)$.
\end{enumerate}
\end{lem}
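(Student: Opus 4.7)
My plan for Part (2) is immediate. If $\tau \in 2^{\mathbb{N}} \setminus (U \cdot f^{-1})$ then $\tau \cdot f \notin U$, so the percolator word $w := y_{s_{1}}^{t_{1}}\cdots y_{s_{n}}^{t_{n}}$ acts trivially on $\tau \cdot f$, whence $\tau \cdot fw = \tau \cdot f$. Since $f \in F$ acts by the finite prefix replacement associated with its tree diagram, this preserves tail equivalence.

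For Part (1), my strategy is to exhibit, inside an arbitrary basic clopen $[v] \subseteq U'$, a concrete $\tau = v\zeta$ whose image under $fw$ has a tail distinguishable from that of $\tau$. Shrinking $v$ if necessary we may assume $f$ acts on $v$; let $v' = f(v) \in U$, so $v'$ extends some $s_{j}$. The calculation of $w$ on $\tau \cdot f = v' \zeta$ then has the form of a finite binary prefix followed by $N = \sum_{i}|t_{i}|$ percolator symbols $y^{\pm 1}$ interleaved with binary segments, all acting on the tail $\zeta$. The no-cancellation hypothesis is used in exactly one way: it guarantees that this exponent $N$ is preserved under all finitely many substitutions, and consequently that every one of the percolator symbols continues to consume input from $\zeta$ and produce output indefinitely.

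I would then take $\zeta = (\seq{10})^{\infty}$. A direct application of the substitution rules shows that the rightmost percolator, acting alone on an eventually $(\seq{10})^{\infty}$ tail, produces an eventually periodic sequence of minimal period $8$, explicitly $(\seq{11100001})^{\infty}$ in the $y$-case and a cyclic shift thereof in the $y^{-1}$-case. Feeding this output into the next percolator and inducting on $N$, each subsequent transducer multiplies the period by a further nontrivial factor; the no-cancellation hypothesis is exactly what prevents two adjacent transducers from conspiring to collapse the period back to $2$. Because minimal eventual period is a tail-equivalence invariant on eventually periodic sequences, the final output $\tau \cdot fw$ is never tail equivalent to $\tau = v(\seq{10})^{\infty}$, yielding the density statement.

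The main obstacle in this plan is the inductive bookkeeping at the interface between successive percolator symbols in the calculation: when the $k$-th transducer finishes producing a block of output, the $(k{-}1)$-th transducer starts reading that block, and one must rule out an accidental period collapse. The no-cancellation hypothesis is designed to forbid exactly this sort of coincidence, but a rigorous verification requires maintaining an inductive invariant stronger than merely \emph{``period differs from $2$''}. A cleaner alternative is geometric: via the semiconjugacy $\Phi$ of Subsection \ref{ContinuedFractions}, tail equivalence on $2^{\mathbb{N}}$ corresponds to $\mathrm{PGL}_{2}(\Zbb)$-orbit equivalence on $\Rbb$, so it suffices to verify that the piecewise projective map realised by a no-cancellation standard form does not lie in the commensurator of $\mathrm{PGL}_{2}(\Zbb)$ in $\Homeo^{+}(\Rbb \cup \{\infty\})$, and hence generically moves points off their $\mathrm{PGL}_{2}(\Zbb)$-orbit.
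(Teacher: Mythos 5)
Your Part (2) is correct, and note at the outset that the paper itself does not prove this lemma: it is imported from \cite{LodhaMoore} (Lemma 5.9) and \cite{Lodha} (Lemma 3.19), so there is no in-paper argument to measure your route against; the proposal has to stand on its own.

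On its own terms, Part (1) has a genuine gap, and it is exactly the one you flag. Your base-case computation is right ($y\cdot(\seq{10})^{\infty}=(\seq{11100001})^{\infty}$, minimal period $8$), but the inductive step is the entire content of the lemma and is not carried out. The one quantitative claim you lean on --- that each successive transducer ``multiplies the period by a further nontrivial factor'' --- is unjustified and not literally true: applying $y$ once more sends $(\seq{11100001})^{\infty}$ to $(\seq{1111110011})^{\infty}$, so the minimal period goes from $8$ to $10$, which is not an integer multiple of $8$. What you actually need is that after all the interleaved transducers have acted, the output is never eventually a rotation of $(\seq{10})^{\infty}$; nothing in the proposal rules this out. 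The no-cancellation hypothesis is a combinatorial condition on the calculation string (no adjacent $yy^{-1}$ or $y^{-1}y$ arises under the substitutions), and deducing from it a statement about the eventual period of the output is precisely the hard implication the lemma asserts --- so invoking it as ``designed to forbid exactly this coincidence'' comes close to assuming the conclusion. A complete proof along your lines would have to exhibit an explicit inductive invariant of the calculation (preserved across the interface where one percolator symbol begins consuming the output block of the next) which certifies that the tail of the output differs from the tail of the input; identifying and propagating that invariant is the proof, and it is absent. A smaller imprecision: the exponent of the calculation on a given point of $U$ need not equal $\sum_i|t_i|$, since not every $s_i$ need be a prefix of the relevant intermediate sequence, so even the claim that all $N$ symbols ``continue to consume input from $\zeta$'' needs adjustment.

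The geometric fallback does not close the gap either. Under $\Phi$, tail equivalence does correspond to lying in a common $\mathrm{PGL}_2(\mathbf{Z})$-orbit, but ``not lying in the commensurator of $\mathrm{PGL}_2(\mathbf{Z})$'' is neither clearly equivalent to, nor obviously sufficient for, ``moves a dense subset of $U\cdot f^{-1}$ off their orbits'': a homeomorphism can preserve or fail to preserve the orbit equivalence relation on a given set quite independently of any commensuration property of the group itself. In the end one must still produce, inside every open subset of $U\cdot f^{-1}$, a concrete point whose image has a genuinely different continued-fraction tail --- which is the same difficulty in different notation.
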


\subsection{Normal subgroup structure}

We shall use the following results proved in \cite{BurilloLodhaReeves}.

\begin{thm}\label{simplegroup}
$G_0'$ is simple and every proper quotient of $G_0$ is abelian.
\end{thm}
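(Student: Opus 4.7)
My plan is to prove the two assertions together by first reducing the quotient statement to simplicity of $G_0'$ combined with triviality of $Z(G_0)$. Indeed, if $G_0'$ is simple and $N\triangleleft G_0$ is nontrivial, then $N\cap G_0'$ is normal in $G_0'$ and so equals $\{e\}$ or $G_0'$. In the first case $[N,G_0]\subseteq N\cap G_0'=\{e\}$ places $N$ inside $Z(G_0)$, contradicting the nontriviality of $N$ once $Z(G_0)=\{e\}$ is known. Hence every nontrivial $N\triangleleft G_0$ contains $G_0'$, and every proper quotient of $G_0$ is abelian.

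For the simplicity of $G_0'$, I would take a nontrivial $g$ in a normal subgroup $N\triangleleft G_0'$, reduce $g$ to a standard form $fy_{s_1}^{t_1}\cdots y_{s_n}^{t_n}$ without potential cancellations, and split into two cases. If the percolating product $y_{s_1}^{t_1}\cdots y_{s_n}^{t_n}$ is nontrivial, Lemma~\ref{taileq}(1) supplies a basic clopen $U\subseteq 2^{\mathbf{N}}$ and a $\tau\in U$ with $\tau\cdot g$ not tail-equivalent to $\tau$; after shrinking, $U$ and $U\cdot g$ are disjoint. For any $h\in G_0'$ supported in $U$, a suitable commutator of $h$ with $g$ lies in $N$, coincides with $h$ on $U$, and is supported in $U\cup U\cdot g$, producing a controlled \emph{local copy} of $h$ inside $N$. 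If the percolating product is trivial, then $g\in F'$ and the classical Higman argument for Thompson's $F'$ delivers an analogous local copy inside $N$. By Lemma~\ref{Taction2}, conjugating these local copies by elements of $T$ relocates them to arbitrary independent tuples of cones, and a fragmentation argument---expressing every element of $G_0'$ as a product of commutators supported in arbitrarily small clopens---then yields $N=G_0'$. Triviality of the center is checked separately: a putative central $z\neq e$ with nontrivial percolating part cannot commute with a suitably chosen prefix-replacement $x_u\in F$; a central element in $F'$ is excluded by the classical $Z(F)=\{e\}$; and a central element in $F\setminus F'$ fails to commute with $c$, since $c$ is supported on $[0,1]$ while any such $F$-element has nontrivial slope data that is incompatible with commutation.

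The main obstacle I anticipate is the fragmentation step. Unlike in the piecewise-linear Thompson groups, where generators can be chosen with compactly supported intervals, the percolating elements $y_s$ act on \emph{all} infinite binary sequences extending $s$, so expressing an arbitrary element of $G_0'$ as a product of commutators localized in small clopens requires careful use of the substitution calculus on calculations that is described just before Lemma~\ref{taileq}. I expect the proof to proceed by induction on the exponent of a standard form representing the target element, using Lemma~\ref{taileq}(2) to track where tail equivalence is preserved and Lemma~\ref{Taction2} to shuttle localized commutators across $2^{\mathbf{N}}$. Ensuring that the products of conjugates built in this process do not introduce spurious potential cancellations when re-expressed in standard form is where I expect the bulk of the technical effort to lie.
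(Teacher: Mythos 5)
A point of order first: the paper does not prove Theorem~\ref{simplegroup} at all. It is imported verbatim from \cite{BurilloLodhaReeves} (note the preamble ``We shall use the following results proved in \cite{BurilloLodhaReeves}''), so there is no in-paper proof to compare yours against. Judged on its own merits, your outline follows the standard Higman--Epstein template for these groups, and your reduction of ``every proper quotient of $G_0$ is abelian'' to ``$G_0'$ simple and $Z(G_0)=\{e\}$'' is correct. But there are two genuine gaps. First, you propose to relocate the local copies by conjugating by elements of $T$, invoking Lemma~\ref{Taction2}; this is not available, because $T$ is not a subgroup of $G_0$ (the map $l$ does not lie in $G_0$, which acts on the line), and a normal subgroup $N\triangleleft G_0'$ is only stable under conjugation by $G_0'$. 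The transitivity you are allowed to use is that of $F'$ (or $G_0'$) on tuples of non-constant finite binary sequences; this is repairable, but as written the step fails. Second, and more seriously, the fragmentation step you flag as ``the anticipated difficulty'' is the actual content of the theorem. The double-commutator trick only yields $N\supseteq F'$ (equivalently, $N$ contains the commutator subgroup of a local copy of $F$ supported in a small cone). To conclude $N=G_0'$ you must show that the normal closure of $F'$ in $G_0'$ is all of $G_0'$ --- in particular that $G_0'$ is perfect and that the elements $y_sy_t^{-1}$ lie in that normal closure. In \cite{BurilloLodhaReeves} this is exactly where the work lies (it goes through computing the abelianizations of $G_0$ and of the relevant subgroups, e.g.\ $G_0^{ab}\cong\mathbf{Z}^3$); without it your argument proves only $N\supseteq F'$.

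A minor further issue: in your center computation, the claim that every central element of $F\setminus F'$ fails to commute with $c$ is false as stated (an element of $F$ supported far from $[0,1]$ commutes with $c$ while having nontrivial germs at $\pm\infty$). The case is redundant anyway: a central element of $G_0$ lying in $F$ is central in $F$, and $Z(F)=\{e\}$; better still, the usual argument --- any nontrivial homeomorphism moves some dyadic interval off itself, and $G_0$ contains nontrivial elements supported in any such interval --- gives $Z(G_0)=\{e\}$ in one stroke.
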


We shall also use the following characterisation of elements of $G_0'$, viewed as standard forms in $G_0$.

\begin{lem}\label{charelementsG0prime}
Any element of $G_0'$ can be represented as a standard form $fy_{s_1}^{t_1}...y_{s_n}^{t_n}$
such that $\sum_{1\leq i\leq n} t_i=0$ and $s_1,...,s_n$ are non constant finite binary sequences.
Conversely, any such standard form represents an element in $G_0'$.
\end{lem}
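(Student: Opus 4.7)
The plan is to base the proof on a homomorphism $\chi \colon G_0 \to \mathbf{Z}$ that counts the total $y$-exponent, so that $\chi(f) = 0$ for $f \in F$ and $\chi(fy_{s_1}^{t_1}\cdots y_{s_n}^{t_n}) = \sum_{i=1}^n t_i$. The well-definedness of $\chi$ can be verified by inspecting the presentation of $G_0$ from \cite{LodhaMoore}, since each defining relation has total $y$-exponent equal to zero. Combined with Theorem \ref{simplegroup}, which gives that $G_0/G_0'$ is abelian, we obtain a factorization $\chi = \bar{\chi} \circ \pi$ where $\pi \colon G_0 \to G_0/G_0'$ is the abelianization map, and I claim $\bar\chi \colon G_0^{ab} \to \mathbf{Z}$ is an isomorphism.

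To establish the claim, I would exhibit explicit commutator expressions in $G_0$ showing $a, b \in G_0'$. Since $G_0 = \langle F, c\rangle$, once $a, b \in G_0'$ the quotient $G_0/G_0'$ is cyclic, generated by the class of $c$; combined with the surjectivity of $\chi$ (we have $\chi(c) = 1$), $\bar\chi$ is an isomorphism and $\ker\chi = G_0'$. Granted this, the converse direction of the lemma is immediate: if $g = fy_{s_1}^{t_1}\cdots y_{s_n}^{t_n}$ satisfies $\sum t_i = 0$, then $\chi(g) = 0$, so $g \in G_0'$.

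For the forward direction, suppose $g \in G_0'$. By the standard form lemma, $g$ admits a representation $g = fy_{s_1}^{t_1}\cdots y_{s_n}^{t_n}$, and $\sum t_i = \chi(g) = 0$ follows since $\chi(f) = 0$. To arrange in addition that every $s_i$ is non-constant, observe that $F$ acts on finite binary sequences preserving the order of leaves in tree diagrams; since $10$ lies strictly between the leftmost and rightmost leaves of its smallest containing tree, $f(10)$ is always a non-constant sequence for any $f \in F$. Consequently every $F$-conjugate of $c = y_{10}$ is a percolating element $y_s$ with $s$ non-constant, so $G_0 = \langle F \cup \{y_s : s \textup{ non-constant}\}\rangle$. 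Any element of $G_0$ can therefore be written initially as a word in $F$ and non-constant-indexed percolating elements, and the Lodha-Moore normalization to standard form uses only $F$-conjugations and commutation of independent percolating elements, which preserve the non-constancy of the indices.

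The main obstacle is the explicit construction of $a$ and $b$ as commutators in $G_0$, needed to conclude $G_0^{ab} \cong \mathbf{Z}$ via $\chi$. This requires a direct computation in the piecewise projective model, using $c$ together with its $F$-conjugates. A secondary technicality is verifying that the normalization procedure preserves non-constancy of the $s_i$; this follows by inspecting the relations used in \cite{LodhaMoore}, but has to be checked carefully.
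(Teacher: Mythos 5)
The paper offers no proof of this lemma at all --- it is imported wholesale from \cite{BurilloLodhaReeves} --- so there is nothing internal to compare against; but your proposal contains a fatal error rather than a fillable technicality. The claim that $a,b\in G_0'$, and hence that $\bar\chi\colon G_0^{\mathrm{ab}}\to\mathbf{Z}$ is an isomorphism, is false. As the paper itself recalls in the proof of Lemma~\ref{G0subgroup} (citing the same reference \cite{BurilloLodhaReeves}), the abelianisation of $G_0$ is $\mathbf{Z}^3$; since $G_0$ is generated by the three elements $x,x_1,y_{10}$ (that is, by $a,b,c$), their images must generate, hence freely generate, $\mathbf{Z}^3$, so in particular $a$ and $b$ do \emph{not} lie in $G_0'$ and the commutator expressions you plan to exhibit cannot exist. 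Consequently $G_0'\subsetneq\ker\chi$, and the converse direction of your argument collapses: $\chi(g)=0$ does not imply $g\in G_0'$. Concretely, $x\,y_{10}y_{110}^{-1}$ has $\sum t_i=0$ and non-constant subscripts, but its class in $G_0^{\mathrm{ab}}$ is $[x]+[y_{10}]-[y_{110}]=[x]\neq 0$ (the $y_s$ are all conjugate in $G_0$), so it is not in $G_0'$. The ``main obstacle'' you flag at the end is therefore an impossibility, not a computation to be carried out.

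What survives is the forward direction: $\chi$ is indeed a well-defined character killing $G_0'$, so every $g\in G_0'$ has $\sum t_i=0$ in any standard form, and your observation that the partial action of $F$ on finite binary sequences preserves non-constancy (so that non-constant subscripts can always be arranged) is correct. But a correct proof of the converse must control the two remaining characters of $G_0^{\mathrm{ab}}\cong\mathbf{Z}^3$, which record germ data at the two ends $0^{\infty}$ and $1^{\infty}$; this is precisely where the non-constancy of the $s_i$ does its work (a percolating element $y_s$ with $s$ non-constant is supported away from both ends, so the germ data of the whole element is carried by the $F$-part $f$), and it is why the characterisation cannot be reduced to the single $\mathbf{Z}$-valued character $\chi$. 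Your argument would in fact prove $G_0^{\mathrm{ab}}\cong\mathbf{Z}$, contradicting the facts the paper relies on elsewhere; it cannot be repaired by a cleverer commutator computation and needs a genuinely different mechanism for the converse.
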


\subsection{The obstruction to $C^1$-smoothability}

For $n\in \mathbb{N}, n>1$, we define the group $BB(1,n)$ as the group generated by the map $\eta(t)=t+1$
together with the maps:

\[\nu_1(t)=
\begin{cases}
t&\text{ if }t\leq 0\\
n t&\text{ if }t\geq 0\\
\end{cases}
\qquad
\nu_2(t)=
\begin{cases}
n t&\text{ if }t\leq 0\\
t&\text{ if }t\geq 0\\
\end{cases}
\]

This group is denoted as $G_{n}$ in \cite{BonattiLodhaTriestino},
but in the present article we denote it as $BB(1,n)$,
particularly to avoid confusion with the groups $G_0,G$.
In \cite{BonattiLodhaTriestino} (Theorem $6.9$), we showed the following:

\begin{thm}\label{BLTmain}
Let $n\in \mathbf{N},n>1$. 
Then the group $BB(1,n)$ does not admit a faithful $C^1$-action on the closed interval $[0,1]$. 
\end{thm}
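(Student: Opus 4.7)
The plan is to assume for contradiction that $\phi \colon BB(1,n) \to \Diff^1_+([0,1])$ is a faithful action, and to derive a contradiction from the interplay between the ``broken'' nature of the generators $\nu_1,\nu_2$ and a Baumslag--Solitar relation implicitly satisfied by their product.

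First, I would extract the algebraic ingredients. In the standard piecewise linear model, $\nu_1$ is the identity on $(-\infty,0]$ and multiplies by $n$ on $[0,\infty)$, with $\nu_2$ playing the mirror role; the product $u := \nu_1\nu_2$ acts globally as $t\mapsto nt$, and together with $\eta$ it satisfies the Baumslag--Solitar relation $u\eta u^{-1} = \eta^n$. Moreover, for $k\in \mathbf{N}$ sufficiently large the supports of $\nu_1$ and $\eta^{-k}\nu_2\eta^k$ are disjoint in the standard model, so these two elements commute in $BB(1,n)$. The symmetric relation with the roles swapped also holds. These relations must be respected by any faithful $\phi$, and form the rigid algebraic skeleton on which the derivative argument is built.

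Next, I would work at the boundary point $p=0$, which is fixed by every element of $\phi(BB(1,n))$. Thurston's stability theorem says either $\phi(BB(1,n))$ is trivial on a right-neighbourhood of $0$ (contradicting faithfulness), or there is a non-trivial homomorphism $BB(1,n)\to \mathbf{R}$. The first-order candidate is the derivative character $D_0\colon g\mapsto \log \phi(g)'(0)$. Applying $D_0$ to $u\eta u^{-1}=\eta^n$ gives $D_0(\eta) = nD_0(\eta)$, so $D_0(\eta)=0$. On the other hand, a Kopell-type analysis applied to the commuting family $\{\phi(\eta^{-k}\nu_2\eta^k)\}_{k\gg 0}$, whose supports in the standard model recede to $-\infty$, would force $\phi(\nu_1)$ to be the identity on a right-neighbourhood of $0$, hence $D_0(\nu_1)=0$; symmetrically $D_0(\nu_2)=0$. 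Since $\eta,\nu_1,\nu_2$ generate $BB(1,n)$, this forces $D_0\equiv 0$. A parallel analysis at $p=1$, combined with an analysis of the second-order homomorphism produced by Thurston stability when $D_0$ vanishes, exploits the fact that in the abelianization the relation $(n-1)[\eta]=0$ forbids an $\mathbf{R}$-quotient supported on $\eta$, ruling out any nontrivial homomorphism and contradicting faithfulness.

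The main obstacle will be making the Kopell-type step work at only $C^1$ regularity, since Kopell's original lemma requires $C^2$. This is precisely the type of delicate analysis developed in \cite{BonattiLodhaTriestino}, where one controls the interaction of a $C^1$-diffeomorphism with a commuting family of $C^1$-diffeomorphisms whose supports accumulate to a given fixed point. Once this ingredient is in place, the algebraic relations of $BB(1,n)$ and the derivative constraints at the endpoints of $[0,1]$ leave no room for a faithful $C^1$-action, yielding the theorem.
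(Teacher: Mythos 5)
The paper does not actually prove this statement: it quotes it from \cite{BonattiLodhaTriestino} (Theorem 6.9), and the only argument it writes out is for the circle version, Theorem \ref{mainD}, which sits on top of it. Still, that argument shows what the real mechanism is: one restricts to $\langle \eta,\nu\rangle\cong BS(1,n)$ with $\nu=\nu_1\nu_2$, invokes the rigidity statement (Theorem \ref{maindyn}) that a $C^1$ action of $BS(1,n)$ on an interval without interior global fixed points is topologically conjugate to the affine action \emph{and} that $\nu$ has derivative exactly $n$ at its interior fixed point, and then uses the commutations $[\nu_i,\nu]=1$ to push that hyperbolic fixed point along the $\nu_i$-orbit; $C^1$-continuity of $\nu'$ forces the set of fixed points of $\nu$ with derivative $n$ to be finite, so each $\nu_i$ has a finite orbit, and one iterates to a contradiction. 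Your route through Thurston stability is genuinely different, and it cannot be completed as written.

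The fatal problem is the endgame. $BB(1,n)$ \emph{does} admit nontrivial homomorphisms to $\mathbf{R}$: in the standard model every element agrees with $t\mapsto n^at+b$ near $+\infty$, and $g\mapsto a$ is a surjection onto $\mathbf{Z}$ sending $\nu_1\mapsto 1$, $\nu_2\mapsto 0$, $\eta\mapsto 0$ (and symmetrically at $-\infty$). So the relation $(n-1)[\eta]=0$ only kills the $\eta$-coordinate of the abelianization, and ``ruling out any nontrivial homomorphism'' is impossible; Thurston stability alone can never finish the proof. That shifts the entire burden onto your ``Kopell-type'' step forcing $D_0(\nu_1)=D_0(\nu_2)=0$, and that step is not justified: you reason from the supports of $\eta^{-k}\nu_2\eta^k$ \emph{in the standard model}, but the hypothetical action $\phi$ is arbitrary and nothing ties its dynamics near the endpoint $0$ of $[0,1]$ to the standard picture; moreover Kopell's lemma genuinely fails in class $C^1$, which is precisely why \cite{BonattiLodhaTriestino} replaces it with the hyperbolicity (derivative equal to $n$) mechanism. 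Two smaller slips: triviality of the germ at $0$ does not by itself contradict faithfulness of the action on all of $[0,1]$, and when $D_0\equiv 0$ the homomorphism produced by Thurston's theorem is a limiting translation-number-type character, not a ``second-order derivative'' one can dispatch with the relation $(n-1)[\eta]=0$. In short, the key idea --- manufacturing a point where $\nu$ must have derivative exactly $n$ and propagating it by $[\nu_i,\nu]=1$ --- is absent, and without it the proposal does not close.
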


The following is remarked in \cite{BonattiLodhaTriestino}.

\begin{thm}\label{mainD}
The group $BB(1,2)$ does not admit a faithful $C^1$-action on the circle.
\end{thm}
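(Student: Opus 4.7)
The natural plan is to reduce Theorem~\ref{mainD} to Theorem~\ref{BLTmain}. Suppose $\rho: BB(1,2) \to \Diff^1_+(\mathbf{S}^1)$ is a faithful action; I would show that $\rho$ admits a global fixed point $p\in\mathbf{S}^1$ and then cut $\mathbf{S}^1$ open at $p$ to obtain a faithful $C^1$ action on $[0,1]$. The $C^1$-smoothness of $\rho$ at $p$ ensures that the derivative of each $\rho(g)$ at $p$ agrees from the two sides, so the cut-open action is genuinely $C^1$ at the new endpoints; faithfulness is automatic since no non-trivial homeomorphism of $\mathbf{S}^1$ is supported at a single point. The resulting $C^1$ action on $[0,1]$ then contradicts Theorem~\ref{BLTmain}.

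The existence of a fixed point is driven by the Baumslag--Solitar relation $\delta_2\eta\delta_2^{-1} = \eta^2$ inside $BB(1,2)$, where $\delta_2 = \nu_1\nu_2$ is dilation by $2$ (easily verified from the formulas for $\nu_1,\nu_2$). By conjugation-invariance of Poincar\'e's rotation number,
\[
\tau(\rho(\eta)) \;=\; \tau(\rho(\eta^2)) \;=\; 2\tau(\rho(\eta))\pmod 1,
\]
so $\tau(\rho(\eta))=0$ and $F := \mathrm{Fix}(\rho(\eta))$ is non-empty. On each arc of $\mathbf{S}^1\setminus F$ the map $\rho(\eta)$ is strictly monotone with no interior fixed points, hence so is $\rho(\eta^n)$ for every $n\neq 0$; consequently $\mathrm{Fix}(\rho(\eta^n))=F$ and $\rho(\delta_2)$ preserves $F$. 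The same argument applied to $\eta_{1/2^n} = \delta_2^{-n}\eta\delta_2^n$ shows that every element of the dyadic translation subgroup $A\cong\mathbf{Z}[1/2]\subset BB(1,2)$ has vanishing rotation number under $\rho$, and the standard amenability argument (rotation number is a homomorphism on amenable subgroups, vanishing forces an invariant measure with atoms) then yields a common fixed point $p$ of $\rho(A)$.

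I expect the main obstacle to be upgrading $p$ to a fixed point of the whole group: since $\nu_1$ and $\nu_2$ do not normalise $A$, the set $\mathrm{Fix}(\rho(A))$ need not be $\rho(BB(1,2))$-invariant. My approach would be first to exploit the commutativity $[\nu_1,\nu_2]=1$ together with the fact that $\rho(\nu_1\nu_2)$ already preserves $\mathrm{Fix}(\rho(A))$, in order to constrain the permutations induced by $\rho(\nu_1)$ and $\rho(\nu_2)$ on this set; if that does not directly produce a global fixed point, one passes to the finite-index subgroup $G'\leq BB(1,2)$ fixing some point in a finite $\rho(BB(1,2))$-orbit and verifies that $G'$ still falls within the scope of Theorem~\ref{BLTmain} (for instance, because $G'$ contains an isomorphic copy of $BB(1,2^k)$ for some $k\geq 1$). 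Once a common fixed point is in hand, the cutting argument is routine and the proof is complete.
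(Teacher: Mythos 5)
Your overall shape — find fixed points, cut the circle open, and quote Theorem~\ref{BLTmain} — is the same as the paper's, but there are two genuine gaps at exactly the places where the paper has to import non-trivial $C^1$ input, and your purely topological toolkit (rotation numbers, invariant measures, commutativity) cannot fill them. First, your rotation-number argument only produces a common fixed point for the dyadic translation subgroup $A$. The dilation $\nu=\nu_1\nu_2$ preserves $\mathrm{Fix}(\rho(A))$ (it normalises $A$), but nothing in your argument prevents $\rho(\nu)$ from acting on that closed invariant set without fixed points, Denjoy-fashion; for merely continuous actions of $BS(1,2)$ on the circle this really can happen. The paper gets past this by invoking the Guelman--Liousse theorem (Theorem~\ref{power}), a genuinely $C^1$ result, to find $m$ with $\langle \nu^m,\eta\rangle$ admitting a global fixed point. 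Second, for $\nu_1$ and $\nu_2$ separately, commutativity with $\nu$ only tells you that $\rho(\nu_i)$ preserves $\mathrm{Fix}(\rho(\nu^m))$, which may be infinite, so again no fixed point follows. The paper's actual mechanism is $C^1$-rigidity: by Theorem~\ref{maindyn} the interior fixed point $x$ of $\nu^m$ satisfies $(\nu^m)'(x)=2^m$, the set $\{t:\nu^m(t)=t,\ (\nu^m)'(t)=2^m\}$ is \emph{finite} (an accumulation point would force derivative $1$ and $2^m$ simultaneously), and $\nu_i^m$ permutes this finite set, hence has a finite orbit. Your proposed "constrain the permutations induced by $\rho(\nu_1),\rho(\nu_2)$" is the right instinct but has no finite set to permute without this derivative argument.

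A smaller but still necessary point: even after all powers are taken, the paper never produces a global fixed point of $BB(1,2)$ itself. The elements $\nu_1^{mn},\nu_2^{mn}$ may fail to fix the global fixed point $\infty$ of $BS(1,2^{mn})$; the paper's Claim shows that then they are mutually inverse on an interval around $\infty$, so the action there is abelian and faithfulness survives on the complementary closed interval. Either way one lands on a faithful $C^1$ action of $BB(1,2^{mn})$ (not $BB(1,2)$) on a closed interval, which is why Theorem~\ref{BLTmain} is stated for all $BB(1,k)$, $k>1$. Your fallback of passing to a finite-index stabiliser and hoping it contains some $BB(1,2^k)$ points in this direction, but as written the proposal neither establishes the needed fixed points nor the isomorphism type of the subgroup generated by the powers, so the reduction to Theorem~\ref{BLTmain} is not yet complete.
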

In \cite{BonattiLodhaTriestino} we do not provide the full details for the above.
For the sake of completeness, we include a full proof of this below.
We shall use the following results (which are similarly stated as Theorem $6.2$ and Proposition $6.1$ in \cite{BonattiLodhaTriestino}).

\begin{thm}\label{maindyn}
Let $n\in \mathbf{N},n>1$. 
Consider a faithful $C^1$-action of $BS(1,n)$ on $[0,1]$, without global fixed points in $(0,1)$. Then the action is
topologically conjugate to the standard affine action.
Moreover, assume that this group is generated by two generators $a,b\in \textup{Diff}^1([0,1])$ such that $a^{-1} b a= b^{n}$.
Then $a$ has a derivative equal to $n$ at its interior fixed point.
\end{thm}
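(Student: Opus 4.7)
The plan is to proceed in three stages. I would first show that $\mathrm{Fix}(b) = \{0,1\}$, i.e.\ that $b$ acts freely on $(0,1)$. Since orientation-preserving $C^1$ diffeomorphisms of $[0,1]$ form a torsion-free group, $\mathrm{Fix}(b) = \mathrm{Fix}(b^n)$, so the relation $a^{-1}ba=b^n$ forces $a$ to preserve $\mathrm{Fix}(b)$. Combined with the no-interior-global-fixed-points hypothesis, $a$ acts without fixed points on $\mathrm{Fix}(b)\cap(0,1)$, so any $a$-orbit there is infinite, monotone, and accumulates only at $\{0,1\}$. Applying the chain rule to $a^{-1}ba=b^n$ at the common endpoint fixed points yields $b'(0)=b'(0)^n$ and $b'(1)=b'(1)^n$, hence $b'(0)=b'(1)=1$. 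Iterating the relation to $a^{-k}ba^k=b^{n^k}$ and applying the chain rule at $q\in\mathrm{Fix}(b)\cap(0,1)$ gives $b'(a^k(q)) = b'(q)^{n^k}$; since $a^k(q)$ converges to $0$ or $1$ where $b'=1$, continuity forces $b'(q)=1$. A $C^1$-distortion argument then controls the lengths of the components of $(0,1)\setminus\mathrm{Fix}(b)$ and contradicts the faithfulness of the action.

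With $b$ now known to act freely on $(0,1)$, H\"older's theorem provides a continuous weakly increasing surjection $\varphi:(0,1)\to\mathbb{R}$ with $\varphi\circ b = T_1\circ\varphi$, where $T_1(y)=y+1$. The relation translates into $A\circ T_1 = T_n\circ A$ for $A:=\varphi a\varphi^{-1}$, so $A$ is an orientation-preserving homeomorphism of $\mathbb{R}$ satisfying $A(y+1)=A(y)+n$, with a unique fixed point in $\mathbb{R}$ corresponding to the interior fixed point of $a$. Minimality of the $\langle a,b\rangle$-action on $(0,1)$ -- immediate from the relation together with the free $b$-action -- ensures $\varphi$ has no collapsed intervals and is thus a homeomorphism, giving the desired topological conjugacy to the standard affine action.

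For the derivative computation, let $p$ be the unique interior fixed point of $a$ and set $\alpha:=a'(p)$. Iterating the defining relation yields $a^k(b(p)) = b^{n^k}(p)$ for every $k\geq 1$. The Hartman-Grobman / Sternberg linearisation shows that $a$ is $C^0$-conjugate near $p$ to the linear map $x\mapsto \alpha x$, so the intrinsic escape rate from $p$ under forward iteration of $a$ is exactly $\alpha^k$. On the other hand, $\varphi(b^{n^k}(p)) = \varphi(p) + n^k$, so in $\varphi$-coordinates the orbit of $b(p)$ under $\langle a\rangle$ escapes linearly at rate $n^k$. Comparing the two escape rates, using the $C^1$-continuity of $a'$ along the orbit together with the boundary derivatives $b'(0)=b'(1)=1$ from the first stage, forces $\alpha=n$.

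The principal obstacle is this last step, in which one must transfer the combinatorial scaling by $n$ seen through $\varphi$ into the smooth multiplier $\alpha$. Since the conjugacy $\varphi$ carries no a priori regularity, in $C^1$ this identification cannot invoke the Szekeres flow or Kopell's lemma (the classical $C^2$ tools) and instead must proceed by matching escape rates directly. This subtle interplay between topological and differentiable data, with the strict scaling $n\geq 2$ enforcing hyperbolic behaviour, is what uniquely pins down $\alpha=n$.
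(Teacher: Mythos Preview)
The paper does not contain a proof of this theorem: it is quoted as Theorem~6.2 and Proposition~6.1 of \cite{BonattiLodhaTriestino} and used as a black box in the subsequent proof of Theorem~\ref{mainD}. So there is no in-paper argument to compare your proposal against.

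That said, your sketch has two genuine gaps. In Stage~1, the passage from ``$b'(q)=1$ for every $q\in\mathrm{Fix}(b)$'' to ``$\mathrm{Fix}(b)\cap(0,1)=\emptyset$'' is asserted via an unspecified ``$C^1$-distortion argument'' that is supposed to contradict faithfulness. This step is not routine: knowing that $b$ is tangent to the identity along its fixed set, and that the $a$-orbit of any complementary component has lengths tending to zero, does not by itself produce a nontrivial group element acting as the identity. You need to exhibit a concrete mechanism (for instance a bounded-distortion estimate on $(a^{-k})'$ along a carefully chosen orbit, or a translation-number argument on a component) that actually forces a relation to collapse; as written, the contradiction is not visible.

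Stage~3 is more seriously incomplete. Your invocation of Hartman--Grobman presupposes $\alpha=a'(p)\neq 1$, which you have not established; a topologically hyperbolic $C^1$ fixed point can perfectly well have derivative $1$ (think $x\mapsto x+x^3$), and in that case there is no linearising conjugacy and no well-defined smooth ``escape rate $\alpha^k$''. Even granting $\alpha\neq 1$, the comparison you propose pits the escape rate $n^k$ measured in the $\varphi$-coordinate against the rate $\alpha^k$ measured in the original coordinate, and the bridge between them is the conjugacy $\varphi$, which you correctly note carries no regularity. You then appeal to ``$C^1$-continuity of $a'$ along the orbit together with $b'(0)=b'(1)=1$'' to force $\alpha=n$, but no actual computation is given; this is precisely the heart of the matter, and in the cited reference it is handled by a direct mean-value argument tracking how $a$ maps consecutive $b$-fundamental domains near the endpoint (where $b'\to 1$) rather than by any linearisation near $p$. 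Your final paragraph correctly identifies this as the principal obstacle but does not overcome it.
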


We shall also use the following special case of a result due to Guelman and Louisse (see \cite{GuelmanLiousse}).

\begin{thm}\label{power}
Fix $n\in \mathbf{N},n>1$.
Let $BS(1,n)$ be generated by two generators $a,b\in \textup{Diff}^1(\mathbf{S}^1)$ such that $a^{-1} b a= b^{n}$.
Then there is an $m\in \mathbb{N}\setminus \{0\}$ such that the group generated by $a^m, b^{n-1}$
admits a global fixed point on $\mathbf{S}^1$.
\end{thm}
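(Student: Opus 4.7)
The plan is to find a point of $\mathbf{S}^1$ simultaneously fixed by $b^{n-1}$ and by some nonzero power of $a$; any such point is a global fixed point of $\langle a^m,b^{n-1}\rangle$. First I would argue on rotation numbers. The Baumslag--Solitar relation makes $b$ and $b^n$ conjugate in $\textup{Homeo}^+(\mathbf{S}^1)$, so $\rho(b)=\rho(b^n)=n\rho(b)\pmod{1}$, forcing $(n-1)\rho(b)\in\mathbf{Z}$ and hence $\rho(b^{n-1})=0$. Therefore $F:=\textup{Fix}(b^{n-1})$ is a non-empty closed subset of $\mathbf{S}^1$.

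Next I would establish that $a(F)\subseteq F$. Iterating the relation gives $a^{-1}b^{n-1}a=b^{n(n-1)}$, equivalently $b^{n-1}\circ a=a\circ b^{n(n-1)}$. For $x\in F$, the equality $b^{n-1}(x)=x$ implies $b^{n(n-1)}(x)=x$, so $b^{n-1}(a(x))=a(b^{n(n-1)}(x))=a(x)$. By Zorn's lemma, choose a minimal non-empty closed $a$-invariant subset $M\subseteq F$; since $a$ is a homeomorphism of the compact space $\mathbf{S}^1$, the set $a(M)$ is closed, contained in $M$, and $a$-invariant, so minimality forces $a(M)=M$ and $a|_M$ is a minimal homeomorphism of $M$. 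The classification of minimal sets for a homeomorphism of $\mathbf{S}^1$ then leaves three cases: $M$ is finite, a Cantor set, or all of $\mathbf{S}^1$.

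If $M$ is finite of cardinality $m$, then $a^m$ pointwise fixes $M\subseteq F$, and every point of $M$ is a common fixed point of $a^m$ and $b^{n-1}$. If $M=\mathbf{S}^1$, then $F=\mathbf{S}^1$, so $b^{n-1}=\textup{id}$; combined with $a^{-1}ba=b^n=b$ this makes $a$ and $b$ commute, contradicting $\langle a,b\rangle\iso BS(1,n)$ for $n\geq 2$.

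The main obstacle is ruling out $M$ a Cantor set, which is precisely where the $C^1$-hypothesis is essential. Since $M\subseteq\textup{Fix}(b^{n-1})$ is perfect, every point $x\in M$ is a limit of other points of $M$, all fixed by $b^{n-1}$, so the difference-quotient definition of the derivative forces $(b^{n-1})'(x)=1$ for every $x\in M$: the diffeomorphism $b^{n-1}$ is $C^1$-tangent to the identity along the exceptional minimal set $M$. The minimality of $a|_M$ together with the conjugation relation $a^{-1}b^{n-1}a=b^{n(n-1)}$ then propagates derivative information along $a$-orbits in $M$, imposing tight constraints on the distortion sums $\sum_k\log a'(a^k(x))$ and on the transverse germ of $b$ at $M$. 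A Schwartz-type distortion argument, carried out in detail by Guelman and Liousse in \cite{GuelmanLiousse}, shows these constraints are incompatible with $a|_M$ being a free minimal action on a Cantor subset of $\mathbf{S}^1$ by $C^1$ diffeomorphisms, yielding the required contradiction and completing the proof.
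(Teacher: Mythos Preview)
The paper does not supply its own proof of this theorem; it is quoted as a special case of a result of Guelman and Liousse \cite{GuelmanLiousse} and used as a black box in the proof of Theorem~\ref{mainD}. There is thus no proof in the paper to compare against, but your proposal can be assessed on its own merits.

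Your reduction is correct and is essentially the skeleton of the Guelman--Liousse argument: the rotation-number computation giving $\rho(b^{n-1})=0$, the forward invariance $a(F)\subseteq F$ of $F=\operatorname{Fix}(b^{n-1})$, the extraction of a minimal closed $a$-invariant subset $M\subseteq F$, and the disposal of the finite and full-circle cases are all sound. (One small point: to get the trichotomy for $M$ you should note that when $\rho(a)$ is irrational the unique minimal set of $a$ on $\mathbf{S}^1$ is contained in every nonempty closed $a$-invariant set, hence equals $M$; you use the trichotomy without quite justifying why $M$ is a minimal set for $a$ on the whole circle.) The observation that $(b^{n-1})'\equiv 1$ on a perfect set of fixed points is also correct.

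The gap is the Cantor case. There you do not actually argue anything: you gesture at ``distortion sums'' and ``transverse germs'' and then invoke \cite{GuelmanLiousse} for the contradiction. Since the theorem you are asked to prove \emph{is} that cited result, this is circular. The substantive content of Guelman--Liousse lies precisely in excluding an exceptional minimal set, and that step requires a concrete estimate exploiting the conjugation relation together with $C^1$ control along $a$-orbits in $M$; recall that $C^1$ Denjoy counterexamples exist, so $a$ alone cannot rule out a Cantor minimal set and the relation with $b$ must do real work. If you want a self-contained proof you must reproduce that estimate; as written, your proposal amounts to ``the easy cases are easy, and for the hard case see the reference,'' which is precisely the paper's own stance.
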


Now we proceed to prove Theorem \ref{mainD}.

\begin{proof}
Let $BB(1,2)$ be faithfully generated by three generators $\eta,\nu_1,\nu_2\in \textup{Diff}^1(\mathbf{S}^1)$,
which correspond (algebraically) to the generators $\eta,\nu_1,\nu_2$ in the description of $BB(1,2)$ defined in the beginning of this section.
Then $BS(1,2)$ is the subgroup generated by $\eta, \nu_1\nu_2$.
We fix the notation $\nu=\nu_1\nu_2$.

Using Theorem \ref{power} we conclude that there is an $m\in \mathbf{N}$ such that the subgroup $\langle \nu^m, \eta\rangle\cong BS(1,2^m)$ admits a global fixed point on the circle,
which for convenience we denote by $\infty$.
Consider the subgroup $BB(1,2^m)$ that is generated by $\eta,\nu_1^m,\nu_2^m$.
Since we have a faithful action of $BS(1,2^m)$ on $\mathbf{S}^1$ with a global fixed point,
there is a closed interval $I$ such that the restriction of the action of $BS(1,2^m)$ to $I$ is faithful and without a global fixed interior point.
By an application of \ref{maindyn} we obtain that there is a point $x$ in the interior of $I$ that is fixed by $\nu^m$ and so that $(\nu^m)^{'}(x)=2^m$.

Now we know that $[\nu_i^m,\nu^m]=1$ in $BB(1,2^m)$ for each $i\in \{1,2\}$.
This means that $$x\cdot ((\nu_i^m)^{-1} \nu^m \nu_i^m)$$ is a fixed point for $\nu^m$ so that $(\nu^m)^{'}(y)=2^m$ for $y=\nu_i^m(x)$.
The set $$\{t\in \mathbf{S}^1\mid \nu^m(t)=t, (\nu^m)^{'}(t)=2^m\}$$
must be finite, since otherwise this set admits a limit point where the derivative of $\nu^m$ is simultaneously equal to $1$ (since it is an accumulation point of fixed points of $\nu^m$)
and $2^m$ (since $\nu^m$ is $C^1$).

It follows that $\nu_i^m$ has a finite orbit containing $x$ for each $i\in \{1,2\}$.
So there is a number $n\in \mathbf{N}$ such that $\nu_1^{nm},\nu_2^{nm}$ fix $x$.
Now we generate a group with $$\eta, \nu_1^{mn},\nu_2^{mn},\nu^{mn}$$
to obtain a faithful $C^1$ action of $BB(1,2^{mn})$ for which:
\begin{enumerate}
\item The subgroup $BS(1,2^{mn})$ admits a global fixed point at $\infty$.
\item The generator $\nu_i^{mn}$ fixes a point on the circle for each $i\in \{1,2\}$.
\end{enumerate}
{\bf Claim}: Either one of the following holds:
\begin{enumerate}
\item $\nu_1^{mn},\nu_2^{mn}$ both fix $\infty$. 
\item $\nu_1^{mn}\mid I=\nu_2^{-mn}\mid I$ for some open interval $I\subset \mathbf{S}^1$ that contains $\infty$.
\end{enumerate}

Proof of claim: If $(1)$ were not the case, then at least one of the sequences $$(\infty\cdot (\nu_1^{mn})^l)_{l\in \mathbf{N}} \qquad ( \infty\cdot (\nu_2^{mn})^l)_{l\in \mathbf{N}}$$
would accumulate to a point other than $\infty$. 
Recall that $$\nu^l=\nu_1^l\nu_2^l\qquad \forall l\in \mathbb{N}$$ and $$[\nu_1,\nu]=[\nu_2,\nu]=[\nu_1,\nu_2]=1$$
Using this we note that the components of support of $\nu_1,\nu_2$ containing $\infty$ must coincide, and the restrictions of the maps on these components must be the inverses of each other.
This follows immediately by examining the components of supports of $\nu_1,\nu_2$ whose closure contains $\infty$.
Note that in the latter case of the claim, it is straightforward to check that the action $$BB(1,2^{mn})\mid I$$ is abelian, and hence $$BB(1,2^{mn})\mid (\mathbf{S}\setminus I)$$ is faithful.
In both cases, this provides a faithful $C^1$ of $BB(1,2^{mn})$ on a closed interval (either $\mathbf{S^1}\setminus I$) or the two point compactification of $\mathbf{S}^1\setminus \{\infty\}$), which cannot exist thanks to \ref{BLTmain}.
So we obtain a contradiction and hence our claim that $BB(1,2)$ admits a faithful $C^1$ action on the circle must be false.
\end{proof}

\section{A combinatorial model for the group $S$}

\subsection{An infinite presentation}\label{relations}

For any pair $\sigma,\tau$ of independent finite binary sequences, 
we define $$w_{\sigma,\tau}=y_{\sigma}y_{\tau}^{-1}$$
Note that $w_{\sigma,\tau}=w_{\tau,\sigma}^{-1}$.
In this notation we allow the situation $\sigma=\tau$,
for which $w_{\sigma,\tau}$ shall be a representative for the trivial element.
(Recall that by our convention, if $\sigma=\tau$, then $\sigma,\tau$ are assumed to be independent.)

\begin{lem}
For each pair $\sigma,\tau$ of independent finite binary sequences, 
$w_{\sigma,\tau}\in S$.
\end{lem}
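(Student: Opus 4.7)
The plan is to exploit the transitivity of the partial $T$-action on pairs of independent finite binary sequences (Lemma~\ref{Taction}) together with the conjugation identity $f^{-1} y_s f = y_{f(s)}$ for any $f \in T$ that acts on $s$ in the sense of the partial action. This identity is verified by tracking a point $f(s)\eta$ through the composition under the right action: $f^{-1}$ strips the prefix $f(s)$ back to $s$, then $y_s$ applies the local transformation $y$ inside the cylinder $[s]$, and $f$ restores the prefix; for $\xi \notin [f(s)]$, $f^{-1}$ takes $\xi$ out of $[s]$, $y_s$ fixes it, and $f$ returns it. Since $T \subset S$, this immediately gives $f^{-1} w_{\sigma_0, \tau_0} f = w_{f(\sigma_0), f(\tau_0)}$, so membership in $S$ is preserved under $T$-conjugation of such pairs.

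By Lemma~\ref{Taction}, after the trivial case $\sigma = \tau$ (where $w_{\sigma,\tau} = 1$ by convention), the partial $T$-action on ordered pairs of distinct independent finite binary sequences has at most two orbits: the cyclically consecutive pairs and the non-cyclically consecutive pairs. It therefore suffices to exhibit one element of $S$ of the form $w_{\sigma_0, \tau_0}$ in each orbit. For the cyclically consecutive orbit, the pair $(10, 110)$ is cyclically consecutive (it occurs as consecutive leaves of the finite tree with leaves $\{00, 01, 10, 110, 111\}$), and the identification ${\bf s} = y_{10}y_{110}^{-1} = w_{10, 110}$ from Section~\ref{ContinuedFractions} places this element in $S$ by construction.

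For the remaining orbit, I take the representative $(00, 10)$, which is independent but, as one checks directly against the definition, not cyclically consecutive. The key observation is the telescoping identity
\[
w_{00, 01} \cdot w_{01, 10} = y_{00} y_{01}^{-1} y_{01} y_{10}^{-1} = y_{00} y_{10}^{-1} = w_{00, 10}.
\]
Both $(00, 01)$ and $(01, 10)$ are cyclically consecutive (take $u = 0$, $n_1 = n_2 = 0$ in the first case and $u = \emptyset$, $n_1 = n_2 = 1$ in the second), so the previous step places each factor in $S$; hence so does $w_{00, 10}$. Conjugating $w_{00, 10}$ by an appropriate element of $T$ furnished by Lemma~\ref{Taction}(2) then realises every remaining non-cyclically consecutive $w_{\sigma, \tau}$ inside $S$.

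The main obstacle is the non-cyclically consecutive case: such a $w_{\sigma, \tau}$ is not itself a $T$-conjugate of ${\bf s}$, and one must first find an intermediate finite binary sequence that is cyclically consecutive to both members of the pair, so that a telescoping decomposition reduces the problem to the cyclically consecutive case. Once this has been arranged for the single representative $(00, 10)$, the transitivity statement in Lemma~\ref{Taction}(2) upgrades the conclusion to every non-cyclically consecutive pair uniformly.
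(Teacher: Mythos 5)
Your proof is correct, and it shares the paper's overall skeleton --- reduce via Lemma \ref{Taction} to one representative per $T$-orbit of independent pairs, and observe that the cyclically consecutive representative is the generator $w_{10,110}$ itself --- but it handles the remaining orbit by a genuinely different decomposition. The paper takes the representative $(10,1110)$, passes to the conjugate pair $(10,1101)$, and then invokes the amplification identity $y_{110}^{-1}=x_{110}^{-1}y_{1101}^{-1}y_{11001}y_{11000}^{-1}$ to write $w_{10,110}=x_{110}^{-1}w_{10,1101}w_{11001,11000}$, from which $w_{10,1101}$ is solved for as a product of elements already known to lie in $S$. You instead take the representative $(00,10)$ and use the telescoping identity $w_{00,01}\,w_{01,10}=y_{00}y_{01}^{-1}y_{01}y_{10}^{-1}=w_{00,10}$, whose two factors are indexed by cyclically consecutive pairs and hence are $T$-conjugates of the generator. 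Your verifications check out: $(00,10)$ is independent and not cyclically consecutive, while $(00,01)$ and $(01,10)$ are consecutive with the witnesses you give, and the conjugation identity $f^{-1}y_sf=y_{f(s)}$ is exactly what Lemma \ref{Taction} requires. Your route is shorter and avoids the amplification relation and the attendant bookkeeping with $x_{110}$ entirely, using only the cancellation relation (relation $(9)$ of the list $\mathcal{R}$ introduced later) plus conjugation; the paper's version has the mild advantage of exercising the amplification identity that is central to the rest of Section 3, but as a self-contained proof of this lemma yours is cleaner.
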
 

\begin{proof}
Recall from Lemma \ref{Taction} that the partial action
of $T$ on pairs of independent finite binary sequences has precisely three orbits.
A set of representatives of the orbits is given by $10,110$ and $10,1110$ and $10,10$ (for the case where $\sigma=\tau$).
We shall only treat the case where $\sigma\neq \tau$, since in the other case $w_{\sigma,\tau}$ is simply a representative of the trivial homeomorphism.

By Lemma \ref{Taction} there is an element $g\in T$ such that either $$g^{-1} w_{\sigma,\tau} g=w_{10,110}\qquad \text{ or }\qquad g^{-1} w_{\sigma,\tau} g=w_{10,1110}$$
Since $w_{10,110}$ is a generator of $S$, it remains to show that $w_{10,1110}\in S$.
Indeed it suffices to show that $w_{10,1101}\in S$, since there is a $g\in T$ such that $g^{-1} w_{10,1101} g=w_{10,1110}$.

Observe that $$w_{10,110}=x_{110}^{-1} w_{10,1101} w_{11001,11000}$$ 
where equality denotes equality as homeomorphisms of $2^{\mathbb{N}}$.
This follows from the equality $$y_{110}^{-1}=x_{110}^{-1} y_{1101}^{-1} y_{11001}y_{11000}^{-1}$$
Since we know that $x_{110}^{-1}, w_{11001,11000}\in S$, we conclude that $w_{10,1101}\in S$ as required.
\end{proof}

We shall use the following infinite generating set $\mathcal{X}$ for the group $S$:

\begin{enumerate}
\item ($w$-generators) $w_{\sigma,\tau}$ for $\sigma,\tau$ independent pairs of finite binary sequences.
\item ($x$-generators) $x_{\sigma}$ for $\sigma\in 2^{<\mathbf{N}}$ or $\sigma=\emptyset$.
\item ($p$-generators) $p_n$ for $n\in \mathbb{N}$.
\end{enumerate}

We now list a set of relations $\mathcal{R}$ in the generating set $\mathcal{X}$.
We separate them in three families as follows.\\

First, the relations in the $x$ generators:

\begin{enumerate}
\item $x_s^2=x_{s0}x_sx_{s1}$.
\item $x_{s}x_t=x_tx_{s\cdot x_t}$ if $x_t$ acts on $s$.
\end{enumerate}
Secondly, the relations in the $x$ and $p$ generators:
\begin{enumerate}
\item[(3)] $$x_{1^m}^{-1} p_n x_{1^{m+1}}=p_{n+1}\text{ if }n<m\qquad p_n x=p_{n+1}^2$$ 
$$p_n=x_{1^n}p_{n+1}\qquad p_n^{n+2}=1_S$$
\end{enumerate}

Finally,

\begin{enumerate}
\item[(4)] $w_{\sigma,\tau} x_{s}=x_s w_{\sigma \cdot x_s, \tau\cdot x_s}$ if $x_s$ acts on $\sigma,\tau$.
\item[(5)] $w_{\sigma,\tau} p_n=p_n w_{\sigma \cdot p_n, \tau\cdot p_n}$ if $p_n$ acts on $\sigma,\tau$.
\item[(6)] $w_{\sigma_1,\tau_1}w_{\sigma_2,\tau_2}=w_{\sigma_2,\tau_2}w_{\sigma_1,\tau_1}$ if $\sigma_1,\tau_1,\sigma_2,\tau_2$ are independent.
\item[7)] $w_{\sigma_1,\tau_1}w_{\sigma_2,\tau_2}=w_{\sigma_1,\tau_2}w_{\sigma_2,\tau_1}$ if $\sigma_1,\tau_1,\sigma_2,\tau_2$ are independent.
\item[(8)] $w_{\sigma,\tau}=x_{\sigma}w_{\sigma 0,\sigma 10}w_{\sigma 11,\tau}$ and $w_{\sigma,\tau}=x_{\tau}^{-1} w_{\sigma, \tau 1} w_{\tau 01,\tau 00}$.
\item[(9)] $w_{\sigma,\tau}w_{\tau,\nu}=w_{\sigma,\nu}$ if $\sigma,\nu$ are independent.
\item[(10)] $w_{\sigma,\sigma}=1_S$.
\end{enumerate}

The $x$-words together with relations $(1)-(2)$ provide an infinite presentation for $F$ (see \cite{LodhaMoore} for instance).
The $x,p$-words together with relations $(1)-(3)$ provide an infinite presentation for $T$.
(See \cite{BurilloClearyTaback} for instance.)
It shall be the main goal of this section to prove that the $x,p,w$-words together with relations $(1)-(10)$ provide an infinite presentation for $S$.
Subsequently, we shall prove that this reduces to a finite presentation for $S$.

Applications of the above relations to a given word shall be denoted as a \emph{move}.
For instance, consider a word $W$ in the generators containing a subword $V$. 
Then the move $V\to V'$ represents the replacement of the subword $V$ by the subword $V'$ in $W$,
corresponding to the relation $V=V'$.
We list the moves we shall require below.

\begin{enumerate}
\item (Rearrangement move) $$w_{\sigma,\tau} x_{s}\to x_s w_{\sigma \cdot x_s, \tau\cdot x_s}\qquad \text{ if } x_s \text{ acts on }\sigma,\tau$$
$$w_{\sigma,\tau} p_{T}\to p_T w_{\sigma \cdot p_T, \tau\cdot p_T} \qquad \text{ if } p_T\text{ acts on }\sigma,\tau$$
\item (Commuting move) $$w_{\sigma_1,\tau_1}w_{\sigma_2,\tau_2}\to w_{\sigma_2,\tau_2}w_{\sigma_1,\tau_1}\qquad \text{ if }\sigma_1,\tau_1,\sigma_2,\tau_2\text{ are independent}$$
\item (Relabelling move) $$w_{\sigma_1,\tau_1}w_{\sigma_2,\tau_2}\to w_{\sigma_1,\tau_2}w_{\sigma_2,\tau_1}\qquad \text{ if }\sigma_1,\tau_1,\sigma_2,\tau_2\text{ are independent}$$
\item (Amplification move) $$w_{\sigma,\tau}\to x_{\sigma}w_{\sigma 0,\sigma 10}w_{\sigma 11,\tau}$$
$$w_{\sigma,\tau}\to x_{\tau}^{-1}w_{\sigma,\tau 1}w_{\tau 01,\tau 00}$$
\item (Cancellation move) $$w_{\sigma,\tau}w_{\tau,\nu}\to w_{\sigma,\nu}\text{ if }\sigma,\nu\text{ are independent}$$
$$w_{\sigma,\sigma}\to \emptyset$$
\end{enumerate}

We shall fix some informal notation concerning the above moves which shall be useful in describing the arguments.
In the amplification moves (for instance, in the former case), the elements $w_{\sigma 0,\sigma 10},w_{\sigma 11,\tau}$ are said to be \emph{offsprings} of $w_{\sigma,\tau}$.
For the rearrangement moves, we shall say that $x_s$ (or $p_T$) \emph{acts on} the $w$-generator on its left.
This shall be understood to mean that it acts on the sequences in the subscript of the $w$-generator.
Moreover, upon moving this $x_s$ (or $p_T$) to the left of the $w$ generator, replacing $w_{\sigma,\tau}$ by $w_{\sigma\cdot x_s,\tau\cdot x_s}$ shall be described as \emph{reconfiguring} the $w$-generator.
We remark that the amplification move is analogous to the amplification move described in \cite{Lodha} for the groups $G_0,G$.
In general, the moves are natural analogs of the moves described in \cite{Lodha} and \cite{LodhaMoore}.

\subsection{Standard forms for $S$}\label{SSstdform}

Now we describe a notion of standard forms for the group $S$ in the infinite generating set above.

Recall that given a pair $s,t$ of finite binary sequences, we say that $s$ dominates $t$ if either $t\subseteq s$ or $s,t$ are independent.
Note that if $s,t$ are independent then $s$ dominates $t$ and $t$ dominates $s$.
Given pairs $s_1,t_1$ and $s_2,t_2$ of finite binary sequences, we say that $s_1,t_1$ \emph{dominates} $s_2,t_2$
if $u$ dominates $v$ for any $u\in \{s_1,t_1\}$ and $v\in \{s_2,t_2\}$.

We say that a word $f w_{s_1,t_1}^{l_1}...w_{s_n,t_n}^{l_n}$ is in standard form if:
\begin{enumerate}
\item $f\in T$ and $l_i>0$ for each $1\leq i\leq n$.
\item If $1\leq i<j\leq n$ then $s_i,t_i$ dominates $s_j,t_j$.
\end{enumerate}

The length of the standard form $fw_{s_1,t_1}^{l_1}...w_{s_n,t_n}^{l_n}$ is the quantity $\sum_{1\leq i\leq n}|l_i|$.
We define the quantity $$\textup{inf}\{|s_i|,|t_i|\mid 1\leq i\leq n\}$$ as the \emph{depth} of this standard form.
We fix the convention that the depth is $\infty$ if the standard form does not have any occurrences of a $w$-word. 
Two standard forms are said to be \emph{equivalent} if they represent the same elements of the group $S$.

\begin{lem}\label{stdform}
Any word in the infinite generating set $\mathcal{X}$ can be converted into a word in standard form using the relations in $\mathcal{R}$.
\end{lem}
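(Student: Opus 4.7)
The plan is to process any word in $\mathcal{X}$ in three stages.

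Stage 1 (pushing $T$-generators left). The rearrangement moves derived from relations (4) and (5) transport an $x$- or $p$-generator leftward past a $w$-generator, provided it acts on the subscripts of the $w$-generator. Since the input word contains only finitely many $T$-generators, we first apply the amplification move (relation (8)) to each $w$-generator sufficiently many times so that its subscripts exceed in length the index of every $x$-generator in the word and are long enough for every $p$-generator to act. Amplification introduces new $x$-generators with shorter indices, but these too can be pushed past the now-deep subscripts without further obstruction. Iterated rearrangement then produces a word $W_T\cdot W_w$ where $W_T$ is a word in $T$-generators and $W_w$ a word in $w$-generators. We normalise $W_T$ using relations (1)--(3), which are known to form an infinite presentation for $T$.

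Stage 2 (clearing inverses). The identity $w_{\sigma,\tau}^{-1}=w_{\tau,\sigma}$ is derived from relation (9) applied with $\nu=\sigma$, giving $w_{\sigma,\tau}w_{\tau,\sigma}=w_{\sigma,\sigma}$, combined with relation (10), $w_{\sigma,\sigma}=1_S$. Applying this identity rewrites every generator in $W_w$ with a positive exponent.

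Stage 3 (sorting). Apply further amplification to each remaining $w$-generator until every subscript appearing in the product has a common length $N$, and push the newly introduced $x$-generators leftward via additional rearrangement moves. The resulting product consists of $w$-generators all of whose subscripts have length $N$, so any two subscripts are either equal or independent, and relation (6) freely permutes adjacent $w$-generators. The dominance condition of the standard form is then automatic: at common length $N$, any pair $(s_i,t_i)$ dominates any other pair $(s_j,t_j)$, since equal sequences are independent by convention, so any order yields a word in standard form.

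The main obstacle is confirming that the procedure terminates and that no hidden obstructions block the required moves. The decisive observation is that amplification strictly lengthens subscripts, so only finitely many applications are ever required before both the rearrangement moves of Stage 1 and the commuting moves of Stage 3 become applicable. This structure parallels the standard-form reduction for the groups $G_0$ and $G$ established in \cite{LodhaMoore}, from whose $y$-generators the $w$-generators here are built.
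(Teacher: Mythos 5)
Your Stages 1 and 2 are essentially sound and correspond to machinery the paper actually uses: Stage 1 is, in substance, the depth-increasing Lemma \ref{stddepth} (though to make "the new $x$-generators can be pushed past" precise you must amplify the $w$-generators from left to right, so that a freshly introduced $x_{\sigma}$ only ever has to cross $w$-generators whose subscripts have already been made deep), and Stage 2 is the identity $w_{\sigma,\tau}^{-1}=w_{\tau,\sigma}$. The genuine gap is in Stage 3. The claim that further amplification brings every subscript in the product to a common length $N$ is false. A single application of $w_{\sigma,\tau}\to x_{\sigma}w_{\sigma 0,\sigma 10}w_{\sigma 11,\tau}$ already produces the offspring $w_{\sigma 0,\sigma 10}$, whose own two subscripts have lengths $|\sigma|+1$ and $|\sigma|+2$; more decisively, the subscripts descended from $y_{\sigma}$ under repeated amplification always form a complete prefix code of the subtree below $\sigma$, whose cardinality grows by $2$ at each step and is therefore odd, whereas a prefix code all of whose elements have length $N>|\sigma|$ must consist of all $2^{N-|\sigma|}$ extensions of $\sigma$ of that length, an even number. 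So already for the one-letter word $w_{\sigma,\tau}$, with $\sigma,\tau$ independent and far apart so that no cancellation move is available, the configuration you aim for is unreachable. Without the common-length normalisation, subscripts coming from different $w$-generators can be nested (one a proper prefix of the other); in that case relation (6) does not apply to commute the corresponding $w$-generators, and the dominance condition of the standard form has to be arranged by some other mechanism, which your proposal does not supply.

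The paper avoids this global sorting problem entirely by inducting on the length of the word: at each step a single new generator is appended to a word that is already in standard form, and Lemma \ref{stddepth} makes the existing subscripts so long that the new generator either acts on all of them (if it is an $x$- or $p$-generator, after which one rearrangement move absorbs it into the $T$-part) or is automatically dominated by all of them (if it is a $w$-generator, since a much longer sequence cannot be a proper prefix of a shorter one). I would restructure your Stage 3 as such an induction rather than attempting a uniform normalisation of subscript lengths.
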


Before we supply the proof of this Lemma, we need the following:

\begin{lem}\label{stddepth}
Given any standard form in $S$, and $m\in \mathbf{N}$, we can apply a sequence of moves defined in \ref{SSstdform}
to produce an equivalent standard form with depth at least $m$.
\end{lem}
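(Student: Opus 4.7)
The plan is to argue by induction on $m - d$, where $d$ is the depth of the given standard form $W$. If $d \geq m$ there is nothing to do; otherwise, we transform $W$ into an equivalent standard form of depth at least $d + 1$, and iterate. The main idea is that the amplification move (relation 8) replaces a $w$-generator with strictly deeper offspring at the cost of introducing a single $x$-generator, and the rearrangement move (relation 4) can transport this $x$-generator leftward to eventually be absorbed into the $T$-prefix $f$.

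Concretely, fix an index $i$ with $\min(|s_i|, |t_i|) = d$. If $|s_i| = d$, apply the amplification $w_{s_i, t_i} \to x_{s_i} w_{s_i 0, s_i 10} w_{s_i 11, t_i}$ to each of the $l_i$ copies; if additionally $|t_i| = d$, further amplify the offspring $w_{s_i 11, t_i}$ via the symmetric second form of relation (8). After these at-most-two applications per copy, every remaining $w$-generator has depth $\geq d + 1$, but the word also contains several copies of $x_{s_i}$ (and possibly $x_{t_i}^{-1}$) interspersed with the new $w$-generators. We then migrate each such $x$-generator leftward using rearrangement moves. The standard-form domination condition guarantees that each blocking $w_{\sigma, \tau}$ has $\sigma, \tau$ either independent of $s_i$ or extending $s_i$, which is enough for $x_{s_i}$ to act on both subscripts in most cases. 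The boundary cases $\sigma, \tau \in \{s_i, s_i 0\}$ (where $x_{s_i}$ fails to act on a subscript as a finite sequence) are resolved by applying one or two further amplifications to the blocking $w$-generator, creating enough depth in its subscripts for $x_{s_i}$ to pass through. Once all $x$-generators reach $f$ they are absorbed using the $F$- and $T$-relations (1)--(3), and the remaining $w$-generators are sorted into standard form using the commuting (6), relabelling (7), and cancellation (9)--(10) moves.

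The main obstacle will be the boundary-case analysis: each obstruction to moving an $x$-generator leftward triggers additional amplifications, which in turn introduce $x$-generators requiring further migration. The key observation to ensure the whole procedure terminates is that every amplification strictly increases the depth of the subscripts involved, so a well-founded measure --- for instance the lexicographic order on the multiset of depths of $w$-generators still at depth $\leq d$ --- decreases at each elementary step. Since every surviving $w$-generator was produced by at least one amplification of a depth-$d$ generator, its subscripts have length $\geq d + 1$, and the resulting standard form satisfies the required depth bound. The lemma then follows by iterating this depth-increment procedure $m - d$ times.
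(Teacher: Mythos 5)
Your strategy --- amplify the shallowest $w$-generator in place and then push the resulting $x$-generators leftward, unblocking as needed --- is genuinely different from the paper's, and the difference is exactly where your argument develops a gap. The paper's proof inducts on the number of $w$-generator blocks and processes the word left to right: it first deepens $fw_{s_1,t_1}\cdots w_{s_{n-1},t_{n-1}}$ to depth at least $m+3$, and only then amplifies the last generator. Because everything to the left is already much deeper than the subscripts of the $x$-generators produced on the right, those $x$-generators act on every subscript they meet, the migration never encounters a blocker, and no unblocking cascade ever starts. In your scheme the migrating $x_{s_i}$ can meet blockers $s_j\in\{s_i,s_i0\}$ (you identify these correctly, and the domination condition does rule out proper prefixes of $s_i$ appearing to the left), but resolving a blocker at position $j$ by amplification inserts new generators $x_{s_j}$, $x_{s_j0}$ at position $j$, and these must themselves migrate past positions $1,\dots,j-1$, where they can meet their own blockers. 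You assert termination of this cascade via ``the lexicographic order on the multiset of depths of $w$-generators still at depth $\le d$,'' but this measure does not strictly decrease at every elementary step: the second unblocking amplification is applied to a generator such as $w_{s_i0,\,\tau}$ whose depth $\min(|s_i0|,|\tau|)$ may already exceed $d$, so it contributes nothing to your multiset and amplifying it leaves the measure unchanged, while increasing the number of generators and spawning new migrating $x$-generators. As written, the procedure is not shown to halt.

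A second, smaller issue: amplifying ``each of the $l_i$ copies'' of $w_{s_i,t_i}$ in place produces $\cdots w_{s_i0,s_i10}w_{s_i11,t_i}\,x_{s_i}\,w_{s_i0,s_i10}\cdots$, in which the second $x_{s_i}$ is immediately blocked by the subscript $s_i0$ of the first block's offspring, so the cascade is triggered already on the very first pass. The repair is either to adopt the paper's ordering (deepen the left-hand part to a buffer depth first, so that a migrating $x$-generator never meets a blocker), or to replace your measure by one that is genuinely well-founded for the whole cascade --- for instance an ordinal built from the positions of the leftmost remaining blockers of all pending $x$-generators, shown to decrease at each unblocking. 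Without one of these, the termination claim, and hence the lemma, is unsupported.
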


\begin{proof}
We perform an induction on the length of the standard form.

\emph{Base case $n=1$} Consider a standard form $fw_{s,t}$.
Performing the amplification moves $$w_{s,t}\to x_sw_{s0,s10}w_{s11,t}$$ and $$w_{s11,t}\to x_t^{-1}w_{s11,t1}w_{t01,t00}$$
followed by commutation moves we obtain
$$(fx_sx_t^{-1}) (w_{s0,s10}w_{s11,t1}w_{t01,t00})$$
which has depth greater than the depth of the original standard form.
Now we can repeatedly perform such amplification moves on the offspring $w$'s,
followed by commutation moves, to obtain standard forms of the desired large depth.
Note that upon performing these moves systematically, we can ensure that the depth increases by a factor of at most one at each stage.

\emph{Inductive step}
Consider a standard form $fw_{s_1,t_1}^{l_1}...w_{s_n,t_n}^{l_n}$ and $m\in \mathbf{N}$.
Assume for the sake of convenience that $l_1=l_2=...=l_n=1$.
From the inductive hypothesis, we perform moves on the standard form $fw_{s_1,t_1}...w_{s_{n-1},t_{n-1}}$
to obtain a standard form which we denote (for simplicity) by $\xi$, whose depth is at least $m+3$.

Now using the argument in the base case, we perform a sequence of amplification moves on $w_{s_n,t_n}$ to obtain a standard form $g w_{\sigma_1,\tau_1}^{o_1}...w_{\sigma_k,\tau_k}^{o_k}$
of depth precisely $m$.
It is easy to see that the element $g\in F$ acts on each $w$-generator occurring to the left in the word $\xi$, thanks to the assumption on the depth of $\xi$. 
Hence, upon performing a sequence of rearrangement moves, we obtain a standard form of depth at least $m$.
\end{proof}

\begin{proof}
{\bf Proof of Lemma \ref{stdform}}:
We proceed by an induction on the \emph{word length} of a word $W$ in the infinite generating set $\mathcal{X}$.
The base case is trivial.

\emph{Inductive step} Let $W$ be a word of length $n$ of the form $W_1 l$, where $W_1$ is a word of length $n-1$ and $l$ is a generator in $\mathcal{X}$.
Using the inductive hypothesis, we convert $W_1$ to a standard form $\Omega_1$.
We have three cases:
\begin{enumerate}
\item $l=x_{\sigma}$ for some $\sigma\in 2^{<\mathbf{N}}$ or $\sigma=\emptyset$.
\item $l=p_T$ for some $T\in \mathcal{T}$.
\item $l=w_{\sigma,\tau}$ for some $\sigma,\tau$ that are independent binary sequences.
\end{enumerate}
In cases $(1)$ and $(2)$, we apply Lemma \ref{stddepth} to convert $\Omega_1$ to a standard form $\Omega_2$ of sufficiently large depth so that $l$ acts on
each $w$-generator of $\Omega_2$.
We then use the rearrangement relations to move $l$ to the left of all the $w$-generators (in particular, upon reconfiguring the subscripts of
the $w$-generators).
The resulting word is clearly a standard form.
In case $(3)$, we simply convert $\Omega_1$ to a standard form $\Omega_2$ of depth larger than $|\sigma|,|\tau|$,
so the resulting word $\Omega_2 l$ is a standard form.
\end{proof}

We now describe a move on standard forms.
This is a combination of moves described in \ref{relations}, and shall be denoted as the \emph{AR} move.
(AR stands for amplification followed by rearrangement).

Consider the standard form $\Omega=g w_{s_1,t_1}^{l_1}...w_{s_n,t_n}^{l_n}$
such that $x_{s_i}$ acts on each sequence in the set $\{s_j,t_j\mid j<i\}$.
We define the AR move \emph{performed on} $\Omega$ \emph{at} $w_{s_i,t_i}^{l_i}, s_i$ as follows:
\begin{enumerate}
\item Replace $w_{s_i,t_i}^{l_i}$ by $$x_{s_i}(w_{s_i0,s_i10}w_{s_i11,t_i}w_{s_i,t_i}^{(l_i-1)})$$ to obtain
$$g(w_{s_1,t_1}^{l_1}...w_{s_{i-1},t_{i-1}}^{l_{i-1}})(x_{s_i} w_{s_i0,s_i10}w_{s_i11,t_i}w_{s_i,t_i}^{(l_i-1)}) (w_{s_{i+1},t_{i+1}}^{l_{i+1}}...w_{s_{n},t_{n}}^{l_n})$$
\item Perform a rearrangement move to obtain 
$$gx_{s_i} (w_{s_1\cdot x_s,t_1\cdot x_s}^{l_1}...w_{s_{i-1}\cdot x_s,t_{i-1}\cdot x_s}^{l_{i-1}})( w_{s_i0,s_i10}w_{s_i11,t_i}w_{s_i,t_i}^{(l_i-1)}) (w_{s_{i+1},t_{i+1}}^{l_{i+1}}...w_{s_{n},t_{n}}^{l_n})$$
\end{enumerate}
The output of the move is the resulting word.\\

Similarly, define the AR move performed on $\Omega$ at $w_{s_i,t_i}^{l_i}, t_i$ as follows:
\begin{enumerate}
\item Replace $w_{s_i,t_i}^{l_i}$ by $$x_{t_i}^{-1}(w_{s_i,t_i1}w_{t_i01,t_i00}w_{s_i,t_i}^{(l_i-1)})$$ to obtain
$$g(w_{s_1,t_1}^{l_1}...w_{s_{i-1},t_{i-1}}^{l_{i-1}})(x_{t_i}^{-1}(w_{s_i,t_i1}w_{t_i01,t_i00}w_{s_i,t_i}^{(l_i-1)}) (w_{s_{i+1},t_{i+1}}^{l_{i+1}}...w_{s_{n},t_{n}}^{l_n})$$
\item Perform a rearrangement move to obtain 
$$gx_{t_i}^{-1} (w_{s_1\cdot x_{t_i}^{-1},t_1\cdot x_{t_i}^{-1}}^{l_1}...w_{s_{i-1}\cdot x_{t_i}^{-1},t_{i-1}\cdot x_{t_i}^{-1}}^{l_{i-1}})(w_{s_i,t_i1}w_{t_i01,t_i00}w_{s_i,t_i}^{(l_i-1)}) (w_{s_{i+1},t_{i+1}}^{l_{i+1}}...w_{s_{n},t_{n}}^{l_n})$$
\end{enumerate}

The following is an elementary observation and the proof is left to the reader.

\begin{lem} 
Let $\Gamma$ be a word obtained by performing an AR move on a standard form $\Omega$.
Then $\Gamma$ is a standard form.
\end{lem}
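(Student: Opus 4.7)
The plan is to verify directly that the word $\Gamma$ produced by an AR move on $\Omega = g w_{s_1, t_1}^{l_1} \cdots w_{s_n, t_n}^{l_n}$ satisfies the three conditions defining a standard form: the $T$-part lies in $T$, every exponent is positive, and the listed subscript pairs sit in dominance order. The first two conditions are immediate. The $T$-part becomes $g x_{s_i}$ (or $g x_{t_i}^{-1}$), which lies in $T$ because each $x$-generator lies in $T$. The new exponent sequence is $l_1, \ldots, l_{i-1}, 1, 1, l_i - 1, l_{i+1}, \ldots, l_n$; only $l_i - 1$ could fail to be positive, and this occurs exactly when $l_i = 1$, in which case the corresponding term $w_{s_i, t_i}^{0}$ is simply omitted.

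For the dominance check, the structural input is that $x_{s_i}$ (and analogously $x_{t_i}^{-1}$) acts on $2^{<\mathbf{N}}$, where defined, as a tree automorphism: it carries cones bijectively to cones and therefore preserves both the prefix relation and independence between pairs of finite binary sequences. This immediately implies that the reconfigured pairs $(s_j \cdot x_{s_i}, t_j \cdot x_{s_i})$ for $j < i$ retain the mutual dominance order they had in $\Omega$. That the three new pairs $(s_i 0, s_i 10), (s_i 11, t_i), (s_i, t_i)$ sit in dominance order among themselves follows from the independence of $s_i$ and $t_i$ (which forces every extension of $s_i$ to be independent of $t_i$) together with the obvious pairwise independence of $s_i 0, s_i 10, s_i 11$. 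Finally, the new triple dominates each unchanged later pair $(s_k, t_k)$ with $k > i$, because $(s_i, t_i)$ already did in $\Omega$ and dominance transfers under extension of the dominating sequence: if $u$ dominates $v$ then so does $u w$ for any $w$, since both the case $v \subset u$ and the case that $v, u$ are independent pass directly to $u w$.

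The remaining check is that each reconfigured left pair $(s_j \cdot x_{s_i}, t_j \cdot x_{s_i})$ dominates each of the three new pairs; I would handle this by a case split on the relation of $s_j$ to $s_i$, symmetrically for $t_j$. If $s_j$ is independent of $s_i$, then $s_j \cdot x_{s_i} = s_j$ remains independent of $s_i$ and hence of every finite extension of $s_i$, while independence of $s_j$ from $t_i$ was already ensured in $\Omega$. If instead $s_i \subsetneq s_j$, write $s_j = s_i v$; the hypothesis of the AR move forces $v$ to have a prefix in $\{00, 01, 1\}$, and the formulas defining $x$ yield $s_j \cdot x_{s_i} = s_i v'$ with $v'$ starting in $0$, $10$, or $11$ respectively, from which a direct inspection shows that $s_i v'$ dominates each of $s_i 0, s_i 10, s_i 11, s_i$ and is independent of $t_i$. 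The symmetric AR move at $t_i$ is handled identically with $x_{t_i}^{-1}$ in place of $x_{s_i}$. The main potential obstacle is organising this case analysis cleanly, but each individual verification reduces to an elementary observation about prefixes of binary sequences, so the lemma is indeed routine once the tree-automorphism property of the $x$-generators is isolated.
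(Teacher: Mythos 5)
The paper states this lemma without proof (it is explicitly left to the reader), and your direct verification of the defining conditions of a standard form --- positivity of exponents, the $T$-part, and the dominance order, the last via the fact that the partial action of $x_{s_i}$ preserves prefixes and independence together with a case split on the position of $s_j$ relative to $s_i$ --- is correct and is exactly the intended elementary argument. One small imprecision: in the case where $s_j$ is independent of $s_i$ you say that independence of $s_j$ from $t_i$ ``was already ensured in $\Omega$,'' but the standard-form condition only guarantees that $s_j$ \emph{dominates} $t_i$, which also permits $t_i \subseteq s_j$; since domination (not independence) is all the conclusion requires and it is clearly preserved, the argument goes through unchanged.
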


The set of elements of $S$ that fix $\infty=0^{\infty}=1^{\infty}$ form a subgroup we denote by $S_{\infty}$.
The following is a basic observation about $S_{\infty}$.

\begin{lem}\label{ginF}
If $gw_{s_1,t_1}^{l_1}...w_{s_n,t_n}^{l_n}$ is a standard form representation for an element of $S_{\infty}$,
then $g\in F$.
\end{lem}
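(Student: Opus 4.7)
The plan is to observe that every $w$-generator belongs to $S_\infty$, deduce that the product $W := w_{s_1,t_1}^{l_1}\cdots w_{s_n,t_n}^{l_n}$ lies in $S_\infty$, and then read off $g \in S_\infty$ from the hypothesis. Since a standard form has its leading factor in $T$, and since $F$ is by construction the stabilizer of $\infty$ in $T$, the conclusion $g \in T \cap S_\infty = F$ will follow immediately.

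The content of the proof is therefore the verification that each $w_{\sigma,\tau} = y_\sigma y_\tau^{-1}$ fixes $\infty$ on the circle. Via the semi-conjugation $\Phi$ of section \ref{ContinuedFractions}, under which both $0^\infty$ and $1^\infty$ project to $\infty$, this amounts to showing that each $y_s$ fixes both $0^\infty$ and $1^\infty$ as a homeomorphism of $2^{\mathbf{N}}$. For the primitive $y$ this is immediate from its defining rules: at $\xi = 0^\infty$ the clause $\seq{00}\eta \mapsto \seq{0}(\eta\cdot y)$ yields $(0^\infty)\cdot y = 0\cdot((0^\infty)\cdot y)$, forcing $(0^\infty)\cdot y = 0^\infty$, while at $\xi = 1^\infty$ the clause $\seq{1}\eta \mapsto \seq{11}(\eta\cdot y)$ forces $(1^\infty)\cdot y = 1^\infty$. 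For a general localization $y_s$, either $s$ is not a prefix of $0^\infty$, in which case $y_s$ fixes $0^\infty$ because the support of $y_s$ is contained in the cylinder of sequences extending $s$, or $s = 0^k$ for some $k$, in which case $(0^\infty)\cdot y_s = s\cdot((0^\infty)\cdot y) = s\cdot 0^\infty = 0^\infty$. The symmetric dichotomy gives $y_s(1^\infty) = 1^\infty$. Taking products then puts each $w_{\sigma,\tau}$ in $S_\infty$.

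To close, apply the previous paragraph to get $W \in S_\infty$, combine with the hypothesis $gW \in S_\infty$ to obtain $g = (gW)W^{-1} \in S_\infty$, and invoke $g \in T$ (from the definition of a standard form) together with $T \cap S_\infty = F$. No serious obstacle is expected: the lemma reduces to the remark that the primitive $y$ fixes the two ends of the cantor set, which are precisely the two points lying over $\infty$ under $\Phi$, after which everything is bookkeeping.
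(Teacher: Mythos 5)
Your proof is correct and takes essentially the same route as the paper's: the $w$-word fixes $\infty$, hence $g$ must fix $\infty$, and the stabilizer of $\infty$ in $T$ is $F$. The only difference is that you spell out the verification that each $y_s$ fixes $0^{\infty}$ and $1^{\infty}$ (the two points over $\infty$ under $\Phi$), which the paper asserts without proof.
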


\begin{proof}
Note that the homeomorphism represented by the word $w_{s_1,t_1}^{l_1}...w_{s_n,t_n}^{l_n}$ fixes $\infty$.
Therefore if $gw_{s_1,t_1}^{l_1}...w_{s_n,t_n}^{l_n}$ fixes $\infty$, then $g$ must fix $\infty$.
Since the subgroup of $T$ consisting of elements that fix infinity is precisely $F$, we obtain the desired conclusion. 
\end{proof}

We denote the set of words in standard form representing elements of $S,S_{\infty},G$ respectively by $\mathcal{S},\mathcal{S}_{\infty},\mathcal{G}$.
Given a standard form $$fw_{s_1,t_1}^{l_1}...w_{s_n,t_n}^{l_n}\in \mathcal{S}_{\infty}$$
the homeomorphism it describes is representable as a standard form in $\mathcal{G}$ as
$$f y_{s_1}^{l_1} y_{t_1}^{l_1}...y_{s_n}^{l_n}y_{t_n}^{l_n}$$
This is a standard form,
since for each $i<j$ and each pair $u\in \{s_i,t_i\},v\in \{s_j,t_j\}$ it holds that either $u,v$ are independent or $v\subset u$.
The latter standard form shall be denoted as a \emph{literal translation} of the former.
To make matters precise, we define the literal translation partial map $$\Lambda:\mathcal{S}_{\infty}\to \mathcal{G}$$
which literally translates a standard form in $\mathcal{S}_{\infty}$ to a standard form in $\mathcal{G}$.

The element $w_{s,t}$ is said to be \emph{balanced}, if either 
$1\subset s, 1\subset t$ or $0\subset s, 0\subset t$. 
Otherwise, it is said to be \emph{unbalanced}.
The two situations for unbalanced $w_{s,t}$ shall be denoted as \emph{parities}
of $w_{s,t}$.
For instance, $w_{10,01}, w_{110,001}$ have the same parity, and $w_{10,01}, w_{001,110}$ have different parities.

A standard form $\Omega=gw_{s_1,t_1}^{l_1}...w_{s_n,t_n}^{l_n}$ is said to be \emph{balanced}, if for each $1\leq i\leq n$,
either $1\subset s_i, 1\subset t_i$ or $0\subset s_i, 0\subset t_i$. 
If the standard form is not balanced, then we denote the quantity $$\Xi(\Omega)=\sum_{i\in J} l_i$$ 
$$\text{ where }J=\{1\leq i\leq n\mid 1\subset s_i, 0\subset t_i\text{ or } 0\subset s_i,1\subset t_i\}$$
as the \emph{unevenness index}.
In particular, the standard form is balanced if the unevenness index is zero.

We end this subsection by defining a few notions concerning standard forms which shall be useful in the proof
of the sufficiency of the relations.
In order to keep the notation simple, we shall often abuse notation and confuse the subscripts of the $w$-generators
as elements themselves. 
But the usage will always be clear from the context.

Consider a standard form $\Omega=gw_{s_1,t_1}^{l_1}...w_{s_n,t_n}^{l_n}$.
For $i<j$, the pair $s_i,s_j$ is said to be an \emph{adjacent pair} in $\Omega$ if:
\begin{enumerate}
\item $s_j\subset s_i$.
\item For $s\in 2^{<\mathbf{N}}$ such that $s_j\subset s\subset s_i$, it holds that $s\notin \{s_1,t_1,s_2,t_2,...,s_n,t_n\}$.
\end{enumerate}
The notion is defined similarly for pairs of the form $$s_i,t_j\qquad t_i,t_j\qquad t_i,s_j$$

The sequence $s_i$ (or $t_i$) is said to be \emph{sheltered} in $\Omega$ if for any infinite binary sequence $\psi$
that contains $s_i$ (respectively $t_i$) as an initial segment, there is a sequence $$u\in \{s_1,t_1,...,s_{i-1},t_{i-1}\}$$
such that $s_i\subset u\subset \psi$ (respectively $t_i\subset u\subset \psi$).
The sequence $s_i$ (respectively $t_i$) is said to be \emph{exposed} in $\Omega$ if it is not \emph{sheltered}.

The sequence $s_i$ is said to be \emph{free} in $\Omega$ if $x_{s_i}$ acts on each sequence in the set $\{s_1,t_1,...,s_{i-1},t_{i-1}\}$.
If $s_i$ is not free, then there sequence $u\in \{s_1,t_1,...,s_{i-1},t_{i-1}\}$ such that $u=s_i0$, and $u$ is said to be a \emph{barrier} for $s_i$.
Similarly, the sequence $t_i$ is free in $\Omega$ if $x_{s_i}^{-1}$ acts on each sequence in the set $\{s_1,t_1,...,s_{i-1},t_{i-1}\}$.
The notion of barrier here is analogously defined.
Note that if $s_i$ is free in $\Omega$, then it is possible to perform an AR move on $\Omega$ at $w_{s_i,t_i}^{l_i}, s_i$.
Similarly, if $t_i$ is free in $\Omega$, then it is possible to perform an AR move on $\Omega$ at $w_{s_i,t_i}^{l_i}, t_i$.

The standard form $\Omega$ is said to be \emph{expansible}, if there is an $s_i$ (or $t_i$) such that $s_i$ (respectively $t_i$) is simultaneously sheltered and free in $\Omega$.

\subsection{Sufficiency of the relations and finite presentability}

Now we establish the following:
\begin{prop}\label{infinitepresentation}
$\langle \mathcal{X}, \mathcal{R}\rangle\cong S$.
\end{prop}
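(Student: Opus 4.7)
The plan is to show that the natural surjection $\pi:\langle \mathcal{X}\mid \mathcal{R}\rangle \twoheadrightarrow S$ is injective. Surjectivity is immediate since $\mathcal{X}$ contains a generating set of $S$ (the elements $x, x_1, p_0, w_{10,110}$ suffice), and each of (1)--(10) is straightforward to verify in $\textup{Homeo}^+(\mathbf{S}^1)$. By Lemma \ref{stdform}, every element of $\langle \mathcal{X}\mid \mathcal{R}\rangle$ is represented by a standard form, so it suffices to show that whenever a standard form $\Omega = f w_{s_1,t_1}^{l_1}\cdots w_{s_n,t_n}^{l_n}$ satisfies $\pi(\Omega)=1_S$, the word $\Omega$ can be reduced to the empty word using the relations in $\mathcal{R}$.

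First I would reduce to the case $\Omega \in \mathcal{S}_\infty$: since each $w_{s,t} = y_s y_t^{-1}$ fixes $\infty$, the hypothesis $\pi(\Omega)=1_S$ forces $f(\infty)=\infty$, so $f \in F$ by Lemma \ref{ginF}. Applying the literal translation $\Lambda$ produces a standard form $\Lambda(\Omega)\in\mathcal{G}$ representing the identity in $G$. By the sufficiency of the infinite presentation of $G$ established in \cite{LodhaMoore,Lodha}, there is a finite sequence of amplification, commutation, rearrangement and cancellation moves on $y$-generators reducing $\Lambda(\Omega)$ to the empty word. The strategy is to lift this reduction back to $\Omega$ inside $\langle \mathcal{X}\mid \mathcal{R}\rangle$: the relations (1)--(10) are engineered so that every $G$-move simultaneously applied to a paired occurrence $y_s y_t^{-1}$ is realized by a corresponding sequence of AR, commuting, relabelling and cancellation moves from \ref{SSstdform} acting on the coupled $w_{s,t}$.

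The concrete induction is on the triple $(\Xi(\Omega), n, \textup{length}(\Omega))$ ordered lexicographically. Relabelling moves (relation (7)) are used first to drive the unevenness index $\Xi(\Omega)$ to zero by pairing up unbalanced $w$-generators of opposite parity; once $\Xi(\Omega)=0$ the form is balanced and splits as a product of two independent contributions on the subtrees rooted at $0$ and $1$, each of which represents the identity and falls to the induction on $n$. To decrease $n$, I would apply AR moves to refine depth until some $w_{s_i,t_i}$ becomes adjacent to a matching partner, at which point (9), together with (6) and (10), eliminates it.

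The main obstacle is precisely this lifting step. A cancellation $y_s y_s^{-1}=1$ that witnesses triviality of $\Lambda(\Omega)$ inside $G$ pairs two individual $y$-symbols, but in $\Omega$ each of these is coupled to a partner $y_t^{-1}$, and the two partners $t_i, t_j$ may sit at very different depths and be shielded by intervening subscripts. Handling this requires first applying the amplification relation (8) to refine both $w_{s_i,t_i}$ and $w_{s_j,t_j}$ until the matching subscripts surface as free and sheltered, then using commuting and rearrangement moves to bring the matched pair into adjacency, and finally invoking (9) to eliminate both simultaneously while (10) absorbs the resulting trivial $w_{\sigma,\sigma}$. This is the same overall scheme used to verify sufficiency of the presentations of $G_0$ and $G$ in \cite{LodhaMoore,Lodha}, adapted to the more delicate bookkeeping imposed by the paired $w$-generators of $S$.
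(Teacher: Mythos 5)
Your overall skeleton (reduce to $\mathcal{S}_\infty$ via Lemma \ref{ginF}, kill the unevenness index first, then split the balanced form over the subtrees rooted at $0$ and $1$) tracks the paper's Step 1 and the beginning of its Step 2. But the heart of your argument --- lifting a reduction of the literal translation $\Lambda(\Omega)$ inside the presentation of $G$ back to a reduction of $\Omega$ by the relations $\mathcal{R}$ --- is exactly the point you flag as ``the main obstacle,'' and your proposed resolution does not close it. The problem is the cancellation step. In $G$ a cancellation pairs a single $y_u$ with a single $y_u^{-1}$, and these two symbols come from two different coupled generators, say $w_{a,u}$ and $w_{u,b}$. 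The only relation available to realize this in $S$ is (9), $w_{a,u}w_{u,b}=w_{a,b}$, and it carries the hypothesis that $a,b$ be \emph{independent}. Nothing in your setup guarantees this: after the amplifications and rearrangements you describe, one of $a,b$ can perfectly well be an initial segment of the other, in which case $w_{a,b}$ is not even a legal generator and the move is unavailable. You give no mechanism for avoiding or repairing this, and no termination or completeness argument ensuring that your ``refine until a matching partner becomes adjacent'' loop ever succeeds.

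The paper circumvents precisely this coupling problem by a device absent from your proposal: it pads $\Omega_0$ with $n$ copies of the trivial generator $w_{1,1}$ and uses relabelling (7) to decouple every $w_{\sigma,\tau}$ (with $0\subset\sigma,0\subset\tau$) into $w_{\sigma,1}w_{1,\tau}$, so that every generator thereafter has one subscript anchored at the fixed sequence $1$ and the other extending $0$; independence in the cancellation move is then automatic, and cancellations always output $w_{1,1}$, which (10) deletes. The paper then runs a reduction procedure (cancellation where possible, AR moves otherwise), proves it terminates via a combinatorial bound on subscript lengths, and --- crucially --- proves that a ``fully developed'' form still containing $w$-generators cannot lie in $F$, by exhibiting a failure of tail-equivalence preservation via Lemma \ref{taileq}. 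This last completeness step is what certifies that the procedure actually reaches a $w$-free word when the input is trivial; your proposal has no analogue of it, relying instead on the unproven assertion that every $G$-move lifts. To repair your argument you would need both the decoupling trick (or some substitute guaranteeing independence at each application of (9)) and a certificate that stuck configurations are non-trivial.
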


The proof of this proposition shall be described in the following steps:
\begin{enumerate}
\item[Step 1] We describe a process to convert a standard form into a balanced standard form using the relations in $\mathcal{R}$.
\item[Step 2] Given a balanced standard form that represents the identity element, we use the relations in $\mathcal{R}$ to convert it to a word in the $x$-generators.
\end{enumerate}

We perform the steps in the order prescribed, although the reader may wish to read them in the opposite order, 
to motivate the need for balanced standard forms.

{\bf Step 1}

First we show that performing AR or cancellation moves cannot increase the unevenness index.
\begin{lem}\label{balancing1}
Let $\Gamma$ be a standard form obtained from applying a sequence of AR and cancellation moves on a standard form $\Omega$.
Then the unevenness index of $\Gamma$ is at most the unevenness index of $\Omega$. 
\end{lem}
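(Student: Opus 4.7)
The plan is to track the unevenness index $\Xi$ move-by-move through the given sequence, and to show that each individual AR or cancellation move leaves it unchanged or decreases it. The key structural observation is that $\Xi$ is additive over the $w$-generators present: each $w_{s_i,t_i}^{l_i}$ contributes $l_i$ if the first digits of $s_i$ and $t_i$ differ and $0$ otherwise. The entire question therefore reduces to understanding how a single move alters the first digits of the subscripts it touches.

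For an AR move performed at $w_{s_i,t_i}^{l_i}, s_i$, I would first handle the local replacement $w_{s_i,t_i}^{l_i} \mapsto w_{s_i 0, s_i 10}\, w_{s_i 11, t_i}\, w_{s_i,t_i}^{l_i - 1}$. Since the subscript of any $w$-generator is a nonempty sequence, $s_i$ has a well-defined first digit; both $s_i 0$ and $s_i 10$ inherit it, so $w_{s_i 0, s_i 10}$ is automatically balanced. The factor $w_{s_i 11, t_i}$ has the same parity as the original $w_{s_i,t_i}$, and the residual $w_{s_i,t_i}^{l_i - 1}$ trivially does. The total contribution from position $i$ is therefore unchanged. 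Next, I would check that the rearrangement step, which passes $x_{s_i}$ leftward past each earlier $w_{s_j,t_j}^{l_j}$ to produce $w_{s_j \cdot x_{s_i},\, t_j \cdot x_{s_i}}^{l_j}$, preserves parity: $x_{s_i}$ fixes any sequence independent of $s_i$ pointwise, while on a sequence extending $s_i$ it acts as a localised prefix replacement whose image still extends $s_i$. Either way the first digit is preserved, so the contribution from each earlier position is unchanged. The symmetric move at $t_i$ proceeds identically with $x_{t_i}^{-1}$ in place of $x_{s_i}$.

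For a cancellation move $w_{\sigma,\tau}\, w_{\tau,\nu} \to w_{\sigma,\nu}$, I would denote by $a, b, c$ the first digits of $\sigma, \tau, \nu$ and observe that the change in $\Xi$ equals $[a \neq c] - [a \neq b] - [b \neq c]$; on the two-element set $\{0,1\}$ the relation ``first digits differ'' is the discrete metric, which satisfies the triangle inequality, so this quantity is at most zero. The trivialising move $w_{\sigma,\sigma} \to \emptyset$ only removes a balanced term and hence leaves $\Xi$ unchanged.

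The main obstacle I anticipate is the rearrangement step of the AR move, where one must confirm that $x_{s_i}$ (respectively $x_{t_i}^{-1}$) genuinely preserves the first digit of every subscript it acts on. This is what forces the split into ``independent from $s_i$'' versus ``extending $s_i$'', and it uses in an essential way the nonemptiness of $s_i$ (since independence of $w$-generator subscripts precludes the empty sequence). Once that point is settled, a routine induction on the length of the move sequence yields $\Xi(\Gamma) \leq \Xi(\Omega)$.
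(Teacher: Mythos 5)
Your proposal is correct and follows essentially the same route as the paper: check each move type separately, note that the offspring $w_{s_i0,s_i10}$ is automatically balanced while $w_{s_i11,t_i}$ inherits the parity of $w_{s_i,t_i}$, observe that the rearrangement step preserves first digits because $|s_i|\geq 1$ forces $x_{s_i}$ to preserve the subtrees rooted at $0$ and $1$, and handle cancellation by the observation that $w_{\sigma,\nu}$ can only be unbalanced if one of $w_{\sigma,\tau}, w_{\tau,\nu}$ is (your triangle-inequality phrasing is the same fact). No gaps.
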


\begin{proof}
Let $\Omega=g w_{s_1,t_1}^{l_1}...w_{s_n,t_n}^{l_n}$
such that $s_i$ is free in $\Omega$.
Consider an AR move on $\Omega$ at $w_{s_i,t_i}^{l_i}, s_i$.
\begin{enumerate}
\item Replace $w_{s_i,t_i}^{l_i}$ by $$x_{s_i}(w_{s_i0,s_i10}w_{s_i11,t_i}w_{s_i,t_i}^{(l_i-1)})$$ to obtain
$$g(w_{s_1,t_1}^{l_1}...w_{s_{i-1},t_{i-1}}^{l_{i-1}})(x_{s_i} w_{s_i0,s_i10}w_{s_i11,t_i}w_{s_i,t_i}^{(l_i-1)}) (w_{s_{i+1},t_{i+1}}^{l_{i+1}}...w_{s_{n},t_{n}}^{l_n})$$
\item Perform a rearrangement move to obtain 
$$gx_{s_i} (w_{s_1\cdot x_s,t_1\cdot x_s}^{l_1}...w_{s_{i-1}\cdot x_s,t_{i-1}\cdot x_s}^{l_{i-1}})( w_{s_i0,s_i10}w_{s_i11,t_i}w_{s_i,t_i}^{(l_i-1)}) (w_{s_{i+1},t_{i+1}}^{l_{i+1}}...w_{s_{n},t_{n}}^{l_n})$$
\end{enumerate}
In this first stage, if $w_{s_i,t_i}$ is balanced, then so are the offsprings $w_{s_i0,s_i10},w_{s_i11,t_i}$.
If $w_{s_i,t_i}$ is unbalanced, then note that $w_{s_i11,t_i}$ will also be unbalanced, but $w_{s_i0,s_i10}$ will be balanced, since $|s|\geq 1$.
In the second stage, note that since $|s|\geq 1$, the partial action of $x_s$ preserves the subtrees rooted at $1$ and $0$ respectively.
It follows that the unevenness index of 
$$w_{s_1\cdot x_s,t_1\cdot x_s}^{l_1}...w_{s_{i-1}\cdot x_s,t_{i-1}\cdot x_s}^{l_{i-1}}$$
is the same as that of
$$w_{s_1,t_1}^{l_1}...w_{s_{i-1},t_{i-1}}^{l_{i-1}}$$
The proof for an AR move of the type $w_{s_i,t_i}^{l_i}, t_i$ is similar.

Consider a cancellation move $w_{\sigma,\tau}w_{\tau,\nu}\to w_{\sigma,\nu}$ where $\sigma,\nu$ are independent.
Note that if $w_{\sigma,\nu}$ is unbalanced, then it must be the case that either $w_{\sigma,\tau}$ or $w_{\tau,\nu}$ are unbalanced.
Our conclusion follows.
\end{proof}

\begin{lem}\label{balancing2}
Consider a standard form $w_{u,v} \Gamma_1$ such that $w_{u,v}$ is unbalanced and $\Gamma_1$ is balanced.
Then we can apply a sequence of moves on $w_{u,v} \Gamma_1$ to obtain an equivalent standard form $g \Gamma_2 w_{u_1,v_1}$ such that:
\begin{enumerate}
\item $g\in F$ and $\Gamma_2$ is a balanced standard form.
\item $w_{u_1,v_1}$ is unbalanced and has the same parity as $w_{u,v}$.
\end{enumerate}
\end{lem}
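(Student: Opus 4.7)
The plan is to proceed by induction on the length $\sum_i l_i$ of $\Gamma_1$. The base case $\Gamma_1 = \emptyset$ is immediate, taking $g = 1_F$, $\Gamma_2 = \emptyset$ and $w_{u_1, v_1} = w_{u,v}$. For the inductive step, write $\Gamma_1 = w_{s,t}^l\,\Gamma_1'$, where $w_{s,t}$ is the leftmost (most dominant) factor. The key sub-task is to swap $w_{u,v}$ past $w_{s,t}^l$, producing an intermediate form $g'\,\Gamma_2^{(1)}\,w_{u',v'}\,\Gamma_1'$ where $g' \in F$, $\Gamma_2^{(1)}$ is balanced, and $w_{u',v'}$ is unbalanced of the same parity as $w_{u,v}$. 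The inductive hypothesis applied to $w_{u',v'}\,\Gamma_1'$, combined with rearrangement moves to consolidate the accumulated $F$-elements on the left of the balanced part, then yields the conclusion.

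Assume without loss of generality $0 \subset u$, $1 \subset v$. Since $w_{s,t}$ is balanced, $s$ and $t$ start with the same bit; I treat the case $0 \subset s$, $0 \subset t$, the other being symmetric with the roles of $u$ and $v$ exchanged. By the standard-form dominance condition together with the independence of $s, t$, either (i) $u, v, s, t$ are pairwise independent, or (ii) exactly one of the containments $s \subset u$ or $t \subset u$ holds (both cannot hold simultaneously, since the initial segments of $u$ are linearly ordered, contradicting the independence of $s,t$). In case (i), $l$ successive applications of the commuting move yield $w_{u,v}\,w_{s,t}^l = w_{s,t}^l\,w_{u,v}$, already in the desired intermediate shape with $g' = 1_F$, $\Gamma_2^{(1)} = w_{s,t}^l$, $w_{u',v'} = w_{u,v}$.

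The obstructed case (ii), say $s \subset u$, is the heart of the argument. Here the commuting and relabelling moves both fail because of the containment. The plan is to apply the amplification move to each copy of $w_{s,t}$ on the $s$-side, replacing it with $x_s\,w_{s0,s10}\,w_{s11,t}$. The two offsprings are balanced, and the emergent $x_s$ is moved leftward past $w_{u,v}$ via the rearrangement relation, reconfiguring the subscripts of the unbalanced generator in a controlled way. Depending on whether the bit of $u$ immediately following the prefix $s$ is $0$ or $1$, the new configuration either becomes independent from the relevant offsprings, or involves a containment at reduced depth difference. A sub-induction on $|u| - |s|$ brings us to case (i). Lemma \ref{balancing1} guarantees that the unevenness index does not increase along the way, so the unbalanced generator passed to the outer induction retains the original parity; and since the rearrangement moves involved use only $x$-generators, the collected left-element lies in $F$ and not merely in $T$.

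The main obstacle is the bookkeeping required to maintain the standard-form dominance ordering throughout the sub-induction: intermediate amplifications and rearrangements can momentarily produce offspring subscripts that are not properly ordered with respect to the current $u,v$ (for instance when $u$ becomes a prefix of a new offspring), demanding further commutation or an additional amplification of $w_{u,v}$ to restore. A careful case analysis driven by the next bit of $u$ after $s$, and by the relation between $v$ and the newly created offspring subscripts, closes the sub-induction and completes the inductive step.
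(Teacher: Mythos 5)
Your overall frame (induction on the length of $\Gamma_1$, splitting into the all-independent case and the containment case) matches the paper's, and your case (i) is handled identically. But the containment case --- which you rightly call the heart of the argument --- is where your proposal has a genuine gap. You propose to amplify $w_{s,t}$ at $s$, the subscript contained in $u$, push the emergent $x_s$ leftward past $w_{u,v}$ by a rearrangement move, and run a sub-induction on $|u|-|s|$ down to case (i). Two concrete problems arise. First, the rearrangement move $w_{u,v}x_s \to x_s w_{u\cdot x_s,\, v\cdot x_s}$ requires $x_s$ to act on $u$; when $u=s0$ it does not (this is exactly the \emph{barrier} situation defined for standard forms), so the very first step of your sub-induction can be inapplicable. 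Second, the sub-induction never actually reaches the independent case: if $u$ properly extends $s$, then $u\cdot x_s$ still extends (or equals) one of the offspring subscripts $s0$, $s10$, $s11$, so the containment persists at every stage and only bottoms out when the reconfigured $u'$ coincides with an offspring subscript --- a configuration not covered by case (i) as you state it and requiring a separate argument. You flag the bookkeeping as the main obstacle but do not resolve it, so the inductive step is not actually established.

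The idea you are missing is the paper's use of the relabelling move. In the case $s\subset u$, the paper amplifies $w_{s,t}$ at $t$ --- the subscript \emph{not} contained in $u$ --- producing $x_t^{-1}w_{s,t1}w_{t01,t00}$, where $t01$ and $t00$ are automatically independent of both $u$ and $v$. Since $t$ is independent of $u$ and $v$, moving $x_t^{-1}$ to the left is unobstructed and leaves $u,v$ unchanged. A single relabelling move $w_{u,v}w_{t01,t00}\to w_{u,t00}w_{t01,v}$ then swaps partners: $u$ (beginning with $0$) is paired with $t00$ to form a balanced generator, while $t01$ (beginning with $0$) is paired with $v$ to form an unbalanced generator of the same parity as $w_{u,v}$. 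One amplification, one relabelling and a couple of commutations dispose of the containment with no sub-induction at all. To salvage your route you would need to supply the terminating analysis sketched above, including the $u=s0$ barrier case; adopting the partner-swapping trick is much cleaner.
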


\begin{proof}
We shall assume in this proof that $0\subset u,1\subset v$.
The case with $1\subset u,0\subset v$ is symmetric.
We proceed by induction on the word length of $\Gamma_1$.

\emph{ Base Case}:  $\Gamma_1=w_{s,t}$.
We know that $w_{s,t}$ is balanced.
The two possibilites
$$0\subset s, 0\subset t\qquad 1\subset s, 1\subset 1$$
are symmetric.
So we shall assume the former, i.e. $0\subset s, 0\subset t$.

Now there are three cases we need to consider:
\begin{enumerate}
\item $s,t,u$ are independent.
\item $s\subset u$.
\item $t\subset u$.
\end{enumerate}

{\bf Case $(1)$}: In this case we can apply the commutation move $$w_{u,v}w_{s,t}\to w_{s,t}w_{u,v}$$ since $u,v,s,t$ are independent.

{\bf Case $(2)$}: First we apply the amplification move $$w_{s,t}\to x_t^{-1} w_{s,t1}w_{t01,t00}$$
followed by the commutation move $$w_{s,t1}w_{t01,t00}\to w_{t01,t00} w_{s,t1}$$
to obtain $$w_{u,v} (x_t^{-1} (w_{t01,t00} w_{s,t1}))$$
Now since $$u\cdot x_t^{-1}=u\qquad v\cdot x_t^{-1}=v$$
we perform a rearrangement move to obtain 
$$x_t^{-1} (w_{u,v} w_{t01,t00} w_{s,t1})$$
By our assumptions, $u,v,t01,t00$ are independent.
So we perform the relabelling move
$$w_{u,v} w_{t01,t00}\to w_{u,t00} w_{t01,v}$$
to obtain $$x_t^{-1}( w_{u,t00} w_{t01,v} w_{s,t1}) $$

Finally, since $t01,v,s,t1$ are independent, we can apply a commutation move 
$$w_{t01,v} w_{s,t1}\to w_{s,t1}w_{t01,v}$$
to obtain $$x_t^{-1} (w_{u,t00}w_{s,t1}w_{t01,v})$$
Since $w_{u,t00},w_{s,t1}$ are balanced and $w_{t01,v}$ is unbalanced, we are done.

{\bf Case $(3)$}: This is similar to case $(2)$. We proceed in a similar fashion, with the exception that we perform the amplification move 
at $w_{s,t},s$ instead of $w_{s,t},t$.

{\bf Inductive step}: Now we assume the conclusion holds for $n$, and we wish the consider the case $|\Gamma_1|=n+1$.
Let $\Gamma_1=\Lambda w_{s,t}$ where $\Lambda$ is a balanced standard form of length $n$.
Using the inductive hypothesis, we apply moves to convert the word $w_{u,v}\Lambda$ to a standard form $\Lambda_1 w_{u',v'}$
where $\Lambda_1$ is balanced and $w_{u',v'}$ is unbalanced and has the same parity as $w_{u,v}$.
Next, we apply the argument in the base case to convert the standard form $w_{u',v'} w_{s,t}$ to a standard form $\Lambda_2 w_{u_1,v_1}$
where $\Lambda_2$ is balanced and $w_{u_1,v_1}$ is unbalanced and has the same parity as $w_{u',v'}$, and hence as $w_{u,v}$.
So we obtain the word $$\Lambda_1 \Lambda_2 w_{u_1,v_1}$$
Now let $$\Lambda_2= f w_{\psi_1,\tau_1}^{k_1}...w_{\psi_m,\tau_m}^{k_m}$$ where $f\in F$.
(This is simply an equality as words).
Next, using a sequence of moves, we convert $\Lambda_1$ to a balanced standard form $$g w_{s_1,t_1}^{l_1}...w_{s_n,t_n}^{l_n}$$
of sufficiently large depth such that:
\begin{enumerate}
\item $f$ acts on each each element of the set $\{s_1,t_1,...,s_n,t_n\}$.
\item $|s_i\cdot f|\geq \sum_{1\leq i\leq m}(|\psi_i|+|\tau_i|) + |u_1|+|v_1|$.
\end{enumerate}
It follows that $$gf (w_{s_1\cdot f,t_1\cdot f}^{l_1}...w_{s_n\cdot f,t_n\cdot f}^{l_n}) (w_{\psi_1,\tau_1}^{k_1}...w_{\psi_m,\tau_m}^{k_m})( w_{u_1,v_1})$$
is the required standard form since:
\begin{enumerate}
\item $$gf (w_{s_1\cdot f,t_1\cdot f}^{l_1}...w_{s_n\cdot f,t_n\cdot f}^{l_n}) (w_{\psi_1,\tau_1}^{k_1}...w_{\psi_m,\tau_m}^{k_m})$$
is balanced.
\item $w_{u_1,v_1}$ is unbalanced and has the same parity as $w_{u,v}$.
\end{enumerate}
\end{proof}

\begin{lem}\label{balancing3}
Consider a standard form $w_{s_1,t_1} w_{s_2,t_2}$ such that:
\begin{enumerate}
\item $|s_1|,|s_2|,|t_1|,|t_2|\geq 2$.
\item $w_{s_1,t_1}$ and $w_{s_2,t_2}$ are unbalanced and have different parities.
\end{enumerate}
Then we can perform a sequence of moves on $w_{s_1,t_1} w_{s_2,t_2}$ to obtain a standard form $\Omega$ which is balanced.
\end{lem}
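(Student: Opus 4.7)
My plan is to reduce to the relabelling move $w_{\sigma_1,\tau_1}w_{\sigma_2,\tau_2} \to w_{\sigma_1,\tau_2}w_{\sigma_2,\tau_1}$ of relation (7), which produces two balanced generators from a pair of unbalanced generators of opposite parities whenever the four subscripts are pairwise independent. Without loss of generality, by parity symmetry, I may assume $0 \subset s_1$, $1 \subset t_1$, $1 \subset s_2$, $0 \subset t_2$; the relabelling then pairs the two $0$-starting sequences ($s_1$ with $t_2$) and the two $1$-starting sequences ($s_2$ with $t_1$), yielding two balanced $w$-generators. The standard-form dominance condition combined with these parities already forces $\{s_1, s_2\}$ and $\{t_1, t_2\}$ to be independent, so the only possible obstructions to full pairwise independence of the four subscripts are the $0$-side nesting $t_2 \subseteq s_1$ and/or the $1$-side nesting $s_2 \subseteq t_1$.

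For a strict nesting such as $t_2 \subsetneq s_1$, my plan is to iteratively apply the amplification $w_{s_2,t_2} \to x_{t_2}^{-1} w_{s_2,t_2 1}w_{t_2 01,t_2 00}$ and push the resulting $x_{t_2}^{-1}$ leftward through $w_{s_1,t_1}$ using the rearrangement relation (4): here $x_{t_2}^{-1}$ acts non-trivially on $s_1$ (since $t_2 \subsetneq s_1$) and trivially on $t_1$ (whose support is disjoint from that of $x_{t_2}^{-1}$). Each iteration spawns a balanced offspring $w_{t_2 01,t_2 00}$ (both subscripts extend the $0$-side sequence $t_2$) and deepens the residual $t$-subscript by one bit. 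After a bounded number of iterations the residual $t$-subscript becomes independent from the transformed $s_1$; the dual amplification rule handles $s_2 \subsetneq t_1$ symmetrically. Once both nestings are resolved, the two residual unbalanced generators have all four subscripts pairwise independent and one application of relabelling completes the reduction; Lemma \ref{balancing1} controls the unevenness index throughout.

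The main obstacle is the equality case $t_2 = s_1$ (symmetrically $s_2 = t_1$), which is permitted by the hypothesis $|s_1|, |t_2| \geq 2$ and cannot be resolved by a one-sided amplification. My plan here is a symmetric double amplification: simultaneously apply $w_{s_1,t_1} = x_{s_1} w_{s_1 0,s_1 10}w_{s_1 11,t_1}$ and $w_{s_2,s_1} = x_{s_1}^{-1} w_{s_2,s_1 1}w_{s_1 01,s_1 00}$. Pushing $x_{s_1}^{-1}$ leftward through the two newly introduced left $w$-generators lets it meet and annihilate the leading $x_{s_1}$, producing the four-generator word $w_{s_1 00,s_1 01}\,w_{s_1 1,t_1}\,w_{s_2,s_1 1}\,w_{s_1 01,s_1 00}$ with the outer pair already balanced. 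The inner pair has subscripts $s_1 1, t_1, s_2, s_1 1$ which are pairwise independent (the repeated $s_1 1$ being self-independent by the convention of Section \ref{relations}), so commuting via relation (6) followed by cancellation via relation (9) collapses them into the single balanced $w_{s_2,t_1}$, yielding an entirely balanced standard form. The remaining technical bookkeeping is to verify that each intermediate word is a legitimate standard form, which follows from the AR-move observations of the preceding subsection.
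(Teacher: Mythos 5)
Your reduction of the problem to the two possible nestings $t_2 \subseteq s_1$ and $s_2 \subseteq t_1$ is correct, and your treatment of the fully independent case by a single relabelling matches the paper's Case (1). The gap is in the strict-nesting iteration. You assert that $x_{t_2}^{-1}$ acts on $s_1$ whenever $t_2 \subsetneq s_1$, but the partial action of $x^{-1}$ is only defined on sequences with a prefix in $\{0,10,11\}$, so $x_{t_2}^{-1}$ does \emph{not} act on $s_1 = t_2 1$. Concretely, $w_{001,11}\,w_{10,00}$ is a legitimate instance of the lemma (all dominance and parity hypotheses hold, all lengths are $\geq 2$), and your very first rearrangement move is illegal there. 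This is not a boundary nuisance that further amplification repairs: writing $s_1 = t_2 u$, your push replaces the remainder $11\eta$ by $\eta$, while amplifying $w_{s_1,t_1}$ at $s_1$ replaces $u$ by $u11$; starting from $u = 1^{2k+1}$ every reachable remainder is again an odd block of $1$'s, so independence is never attained by one-sided amplifications alone, and from $u=1^{2k}$ you land in the equality case rather than independence. (Your equality-case argument, in turn, needs $s_2, t_1$ independent for the final relabelling/cancellation, which fails in the doubly nested case; note also that since the paper's convention declares $\sigma=\tau$ ``independent'' and imposes $w_{\sigma,\sigma}=1_S$, the equality case with $s_2,t_1$ independent is already dispatched by one relabelling move, so the double amplification is unnecessary where it does work.)

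The paper's proof avoids this trap by using the relabelling move in the middle of the argument, not only at the end. In its Case (3) it first amplifies $w_{s_1,t_1}$ at $s_1$ repeatedly to produce a deep unbalanced descendant $w_{v,t_1}$ with $t_2 \subset v$ and $|v|$ large, then amplifies $w_{s_2,t_2}$ \emph{twice} at $t_2$, creating two balanced offspring pairs living in the disjoint subtrees below $t_2 0$ and $t_2 10$. Since $v\cdot g$ extends $t_2$ and is long, it is independent of at least one of these pairs, and a relabelling against that pair converts $w_{v\cdot g, t_1}$ into a balanced generator while transferring the unbalancedness onto $w_{t_2 101, t_1}$ (or $w_{t_2 01,t_1}$), whose $0$-side subscript is now genuinely independent of $s_2$ and $t_2 11$; one final relabelling finishes. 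To salvage your argument you would need this extra mechanism (or an equivalent one) for the remainders $u \in \{1^{2k+1}\}$, together with an explicit reduction of the doubly nested case to the singly nested one, as the paper does in its Case (2).
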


\begin{proof}
It suffices to show this for the case $0\subset s_1, 1\subset t_1$ and hence $1\subset s_2, 0\subset t_2$.
The other case is symmetric.

There are four possible subcases:

\begin{enumerate}
\item $s_1,s_2,t_1,t_2$ are independent.
\item $t_2\subset s_1$ and $s_2\subset t_1$. 
\item $t_2\subset s_1$ and $s_2,t_1$ are independent.
\item $t_2,s_1$ are independent and $s_2\subset t_1$.
\end{enumerate}

{\bf Case $(1)$}: In this case we simply apply a relabelling move $$w_{s_1,t_1} w_{s_2,t_2}\to w_{s_1,t_2} w_{s_2,t_1}$$
to obtain a balanced standard form.

{\bf Case $(2)$}: Our goal will be to reduce this to Case $(3)$.

First, upon performing amplification moves at $w_{s_1,t_1},t_1$, and offsprings, we obtain a standard form $\Gamma w_{s_1,u}$ such that:
\begin{enumerate}
\item $\Gamma$ is balanced.
\item $w_{s_1,u}$ is unbalanced and has the same parity as $w_{s_1,t_1}$.
\item The depth of $\Gamma$ is at least $|s_2|+6$.
\item $|u|\geq |s_2|+6$.
\end{enumerate}

Now apply amplification moves $$w_{s_2,t_2}\to x_{s_2} w_{s_20,s_210}w_{s_211, t_2}$$
followed by $$w_{s_20,s_210}\to x_{s_20}w_{s_200,s_2010}w_{s_2011,s_210}$$ to obtain 
{\large $$(x_{s_2}x_{s_20})(w_{s_200,s_2010}w_{s_2011,s_210}w_{s_211, t_2})$$}
Substituting this in the original word we obtain:
{\large$$\Gamma w_{s_1,u}(x_{s_2}x_{s_20}) (w_{s_200,s_2010}w_{s_2011,s_210}w_{s_211, t_2})$$}
Let $g= x_{s_2}x_{s_20}$.
By the assumption on depth, we can apply rearrangement moves to obtain a standard form of the type:
{\large$$\Gamma_1 w_{s_1,u\cdot g} (w_{s_200,s_2010}w_{s_2011,s_210}w_{s_211, t_2})$$}
where $\Gamma_1$ is balanced.
Since {\large $$s_1, s_200,s_2010,s_2011,s_210$$} are independent and we know that $u\cdot g$ is longer than all these sequences, either one of the following situations holds:
\begin{enumerate}
\item $u\cdot g, s_1, s_200,s_2010$ are independent.
\item $u\cdot g, s_1,s_2011,s_210$ are independent.
\end{enumerate}
We assume the former, the argument in the latter case is similar since {\large $$w_{s_200,s_2010}w_{s_2011,s_210}=w_{s_2011,s_210}w_{s_200,s_2010}$$}

We perform the relabelling move
{\large$$w_{s_1,u\cdot g} w_{s_200,s_2010}\to w_{s_1,s_2010}w_{s_200,u\cdot g}$$}
followed by the commutation move {\large$$w_{s_1,s_2010}w_{s_200,u\cdot g}\to w_{s_200,u\cdot g}w_{s_1,s_2010}$$}
to obtain 
{\large$$\Gamma_1  (w_{s_200,u\cdot g} w_{s_1, s_2010}) (w_{s_2011,s_210} w_{s_211, t_2})$$}
and one more commutation move 

{\large $$w_{s_1, s_2010} w_{s_2011,s_210}\to w_{s_2011,s_210} w_{s_1, s_2010}$$}

to obtain 

{\large$$\Gamma_1  (w_{s_200,u\cdot g} w_{s_2011,s_210}) (w_{s_1, s_2010}w_{s_211, t_2}) $$}

Note that {\large$$\Gamma_1 (w_{s_200,u\cdot g} w_{s_2011,s_210})$$} is a balanced standard form,
and {\large $$(w_{s_1, s_2010}w_{s_211, t_2}) $$} falls into Case $(3)$.
We convert $(w_{s_1, s_2010}w_{s_211, t_2}) $ into a balanced standard form $h \Gamma_2$ using case $(3)$,
where $h\in F$ and $\Gamma_2$ is a balanced $w$-standard form.
Next, we convert {\large$$\Gamma_1 w_{s_200,u\cdot g} w_{s_2011,s_210}$$}
to a balanced standard form $h_1 (w_{u_1,v_1}^{l_1}...w_{u_n,v_n}^{l_n})$
of sufficiently large depth so that $h$ acts on each element of the set $\{u_1,v_1,...,u_n,v_n\}$
and so that upon performing a rearrangement move we obtain a balanced standard form
{\large$$h_1 h (w_{u_1\cdot h,v_1\cdot h}^{l_1}...w_{u_n\cdot h,v_n\cdot h}^{l_n})  \Gamma_2$$}

{\bf Case $(3)$}: First, upon performing amplification moves on $w_{s_1,t_1},s_1$,
we obtain a standard form $\Gamma w_{v,t_1}$ such that:
\begin{enumerate}
\item $\Gamma$ is balanced and $w_{v,t_1}$ is unbalanced and has the same parity as $w_{s_1,t_1}$.
\item The depth of $\Gamma$ is at least $|t_2|+6$.
\item $|v|\geq |t_2|+6$.
\end{enumerate}
Moreover, the natural sequence of amplifications that provides the above also ensures that $t_2\subset v$.

Now we apply the amplification moves
{\large $$w_{s_2,t_2}\to x_{t_2}^{-1} w_{s_2,t_21} w_{t_201,t_200}$$}
followed by {\large $$w_{s_2,t_21}\to x_{t_21}^{-1} w_{s_2,t_211} w_{t_2101,t_2100}$$}
to obtain {\large $$x_{t_2}^{-1} x_{t_21}^{-1} (w_{s_2,t_211} w_{t_2101,t_2100} w_{t_201,t_200})$$}
Next we apply commutation moves
{\large $$w_{s_2,t_211} w_{t_2101,t_2100}\to w_{t_2101,t_2100}w_{s_2,t_211} $$}
and {\large $$w_{s_2,t_211}w_{t_201,t_200}\to w_{t_201,t_200}  w_{s_2,t_211}$$ }
to obtain 
{\large $$x_{t_2}^{-1}x_{t_21}^{-1}  (w_{t_2101,t_2100}w_{t_201,t_200}w_{s_2,t_211} )$$}
Upon substituting in the original standard form we obtain:
{\large $$\Gamma w_{ v, t_1} g (w_{t_2101,t_2100}w_{t_201,t_200}w_{s_2,t_211} )$$}
where $g=x_{t_2}^{-1}x_{t_21}^{-1}$.

By our assumption on depth, $g$ acts on $v$ and each $w$-word of $\Gamma$.
Upon performing rearrangement moves, we obtain a standard form of the type
{\large $$\Gamma_1 w_{ v\cdot g, t_1} (w_{t_2101,t_2100}w_{t_201,t_200}w_{s_2,t_211} )$$}
Either one of the following situations holds:
\begin{enumerate}
\item $v\cdot g, t_1, t_2101,t_2100$ are independent.
\item $v\cdot g, t_1, t_201,t_200$ are independent.
\end{enumerate}
We assume the former, the argument in the latter case is similar.

We perform the relabelling move
{\large $$w_{v\cdot g, t_1} w_{t_2101,t_2100}\to w_{v\cdot g,t_2100}w_{t_2101,t_1}$$}

to obtain 

{\large $$\Gamma_1  (w_{v\cdot g,t_2100} w_{t_2101,t_1}) ( w_{t_201,t_200}w_{s_2,t_211} ) $$}

Another commutation move

{\large $$w_{t_2101,t_1} w_{t_201,t_200}\to w_{t_201,t_200}w_{t_2101,t_1}$$}
give us 

{\large $$\Gamma_1 (w_{v\cdot g,t_2100}  w_{t_201,t_200}) (w_{t_2101,t_1} w_{s_2,t_211} )  $$}

Finally, observe that $$t_2101,t_1, s_2,t_211$$ are independent.
Then using Case $(1)$ we are done.

{\bf Case $(4)$}: This is symmetric to Case $(3)$.

\end{proof}

\begin{prop}\label{balancing}
Let $\Omega$ be a standard form in $\mathcal{S}$. We can perform a sequence of moves to obtain an equivalent standard form $\Gamma$ which is balanced. 
\end{prop}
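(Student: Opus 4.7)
The plan is to prove Proposition \ref{balancing} by induction on the unevenness index $\Xi(\Omega)$. As preparation I will first invoke Lemma \ref{stddepth} to assume $\Omega$ has depth at least $2$, so that every subscript of every $w$-generator has length $\geq 2$; this is the depth hypothesis required by Lemma \ref{balancing3}, and since amplification sends each balanced (resp.\ unbalanced) word to balanced (resp.\ balanced $+$ unbalanced of the same parity) offspring, $\Xi$ is preserved by this preparatory step. The base case $\Xi(\Omega) = 0$ asserts that $\Omega$ is already balanced.

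For the inductive step I first reshape $\Omega$ into the form $g\,\Gamma_{\mathrm{bal}}\,U_1 U_2 \cdots U_m$, where $\Gamma_{\mathrm{bal}}$ is a balanced $w$-form and each $U_j$ is a single unbalanced $w$-generator. I process the $w$-generators of $\Omega$ from left to right, maintaining this shape as an invariant. If the incoming generator is unbalanced, I append it to the tail. If it is balanced, I pull it leftward through the accumulated unbalanced tail $U_1 \cdots U_k$ by iterated applications of Lemma \ref{balancing2}: each time a single unbalanced word is moved past it, the emitted auxiliary $F$-generators are propagated further left via rearrangement moves, ultimately absorbed into $g$. By Lemma \ref{balancing1} this entire reshuffling procedure cannot increase $\Xi(\Omega)$.

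Having arrived at $g\,\Gamma_{\mathrm{bal}}\,U_1 \cdots U_m$, if two of the $U_j$'s have opposite parities, I use commutation moves (with a preceding relabelling if needed to secure the independence of the chosen subscripts) to bring such a pair into adjacent position, and then apply Lemma \ref{balancing3}, which replaces them by a balanced standard form; after again pulling the emitted $F$-generators all the way to the left, the resulting word is in standard form with strictly smaller unevenness index, and the inductive hypothesis concludes. In the remaining case where all $U_j$ share a common parity, I would first amplify a chosen $U_j$ so that one of its subscripts acquires a long prefix distinguishable under some $p_n$ (since $p_n$ flips the first bit of a sequence starting with $1^{n+1}$ but only shifts the leading ones of a sequence of the form $1^k 0 \cdots$ with $k \leq n$), then insert $p_n p_n^{-1}$ around $U_j$ and use the rearrangement $w_{\sigma,\tau}\,p_n = p_n\,w_{\sigma \cdot p_n, \tau \cdot p_n}$ to flip the first bit of exactly one of $\sigma, \tau$; this converts $U_j$ to either a balanced generator or to an unbalanced one of opposite parity, reducing to the previous case.

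The main obstacle will be the bookkeeping around each application of Lemmas \ref{balancing2} and \ref{balancing3}: the auxiliary $F$-generators they emit must be propagated through the remaining $w$-generators to reach the leftmost $T$-factor via rearrangement moves, and at each step one must verify that the induced subscript substitutions do not disturb the portion of the form that is already balanced. A secondary delicate point is the uniform-parity case, where one must choose $n$ and the amplification pattern carefully so that the $p_n$-rearrangement indeed yields a first-bit flip on exactly one subscript; handling this case and confirming that the resulting word can be brought back into standard form with smaller $\Xi$ is the crux of the argument.
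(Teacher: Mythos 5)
Your main loop --- locate two unbalanced generators of opposite parities, bring them together by sliding balanced material past unbalanced material via Lemma \ref{balancing2}, annihilate the pair with Lemma \ref{balancing3}, and induct on the unevenness index $\Xi$ --- is exactly the paper's argument, including the depth-boosting and the leftward propagation of the emitted $F$-elements. (Two small remarks on that part: if opposite parities occur at all among $U_1,\dots,U_m$ then they occur at \emph{consecutive} indices, so no commutation is needed to make them adjacent; and a relabelling move cannot be used ``to secure independence'', since relabelling itself requires all four subscripts to be independent.) The genuine divergence, and the gap, is your final case in which every unbalanced generator has the same parity. The paper dismisses this case by ``an elementary parity argument'': in the situation where the proposition is applied, the $w$-part of $\Omega$ represents an element of $T$, so its literal translation splits as a product $P_0P_1$ of words in percolating elements supported in the $0$- and $1$-subtrees respectively, each $P_i$ must then reduce to an element of $F$ (Lemma \ref{taileq}), and the total $y$-exponent homomorphism forces the number of parity-$(0,1)$ generators to equal the number of parity-$(1,0)$ generators, counted with multiplicity $l_i$. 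Unbalanced generators of both parities are therefore always present, and your last case never arises.

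Your substitute for that argument does not work. After inserting $p_np_n^{-1}$ around $U_j$ and applying $w_{\sigma,\tau}p_n\to p_nw_{\sigma\cdot p_n,\tau\cdot p_n}$, one of the two letters $p_n^{\pm1}$ necessarily remains strictly to the right of the reconfigured generator. A standard form requires the entire $T$-part to sit at the left end of the word, and the only ways to dispose of that stray letter are to pass it back through the reconfigured generator --- which exactly undoes the parity flip --- or to push it rightward through $U_{j+1},\dots,U_m$, which reconfigures all of them and still strands a $p_n^{-1}$ at the right-hand end, where it cannot be absorbed. The rearrangement relation transports an element of $T$ across a block of $w$-generators; it does not permit conjugating a single generator in the middle of a word in isolation. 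The one-generator word $w_{01,10}$ is the stress test: it is a standard form with a single unbalanced generator, your manoeuvre applied to it terminates with a dangling $p_n^{-1}$ rather than a balanced standard form, and there is no opposite-parity partner for it to cancel against. The repair is to replace the $p_n$ manoeuvre by the exponent-counting argument above, which shows the uniform-parity case is vacuous for the words to which the proposition is actually applied.
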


\begin{proof}
Let $$\Omega=\Gamma_1 w_{s_1,t_1} \Gamma_2 w_{s_2,t_2} \Gamma_3$$ 
where equality denotes equality as a word, such that:
\begin{enumerate}
\item $\Gamma_2$ is a balanced standard form.
\item $w_{s_1,t_1}$ and $w_{s_2,t_2}$ are unbalanced and have opposite parities.
\end{enumerate}
By an elementary parity argument, one can always find such an expression as above for $\Omega$.
First, we apply a sequence of moves provided by Lemma \ref{balancing2} to convert $w_{s_1,t_1} \Gamma_2$ into a standard form $\Gamma_4 w_{s_3,t_3}$
such that $\Gamma_4$ is balanced and $w_{s_3,t_3},w_{s_1,t_1}$ have the same parity.
Then we use moves provided by Lemma \ref{balancing3} to convert $w_{s_3,t_3}w_{s_2,t_2}$ to a balanced standard form $\Gamma_5$.
The sum of the unevenness indices of $\Gamma_1, \Gamma_4, \Gamma_5, \Gamma_3$ is smaller than the unevenness index of $\Omega$.
Note that the resulting word, which may not be a standard form, is:
$$\Gamma_1  \Gamma_4 \Gamma_5 \Gamma_3$$

Next, convert the standard form $\Gamma_5$ to a standard form $\Gamma_6$ with sufficiently large depth so that $\Gamma_6\Gamma_3$ is a standard form.
Let $\Gamma_6\Gamma_3=f \Lambda$ where $f\in F$ and $\Lambda$ is a $w$-word in standard form.
Finally, convert $\Gamma_1\Gamma_4$ to a standard form $gw_{s_1,t_1}^{l_1}...w_{s_n,t_n}^{l_n}$ with sufficiently large depth so that:
\begin{enumerate}
\item $f$ acts on each element of the set $\{s_1,t_1,...,s_n,t_n\}$.
\item For each $w$-word $w_{s,t}$ in $\Lambda$, we have that $$|s_i\cdot f|>|s|+|t|\qquad |t_i\cdot f|>|s|+|t|$$
\end{enumerate}
In both steps, we obtain standard forms with unevenness index at most that of the standard form we begin with, thanks to Lemma \ref{balancing1}.
The resulting word $$gf (w_{s_1\cdot f,t_1\cdot f}^{l_1}...w_{s_n\cdot f,t_n\cdot f}^{l_n})\Lambda$$ 
is a standard form that is equivalent to the original standard form, but has smaller unevenness index.
Continuing in this fashion, we apply a sequence of moves to obtain a balanced standard form.
\end{proof}

{\bf Step 2}

Consider a balanced standard form $\Omega=gw_{s_1,t_1}^{l_1}...w_{s_n,t_n}^{l_n}$.
Using commutation moves, we can convert this into a standard form $g\Omega_0\Omega_1$
such that: 
\begin{enumerate}
\item For each $w_{\sigma,\tau}$ occurring in $\Omega_0$, we have $0\subset \sigma, 0\subset \tau$. 
\item For each $w_{\sigma,\tau}$ occurring in $\Omega_1$, we have $1\subset \sigma, 1\subset \tau$. 
\end{enumerate}
We assume for the rest of this step that $\Omega=1_S$.

\begin{lem}
$\Omega_0\in F$ and $\Omega_1\in F$.
\end{lem}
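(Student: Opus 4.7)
The plan is to exploit the disjoint-support structure of $\Omega_0$ and $\Omega_1$ as homeomorphisms of $2^{\mathbf{N}}$, combined with Lemma~\ref{ginF}, to reduce the claim to the well-known fact that $F$ decomposes as $F \times F$ over the top-level split of the binary tree.

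First I will verify that each $w$-generator $w_{\sigma,\tau} = y_\sigma y_\tau^{-1}$ is supported on the cone pair $\{\sigma\zeta : \zeta \in 2^{\mathbf{N}}\} \cup \{\tau\zeta : \zeta \in 2^{\mathbf{N}}\}$, using the fact that each percolating element $y_s$ fixes every infinite binary sequence not extending $s$. By construction, every $w$-generator in $\Omega_0$ has both subscripts extending $\mathtt{0}$ and every one in $\Omega_1$ has both subscripts extending $\mathtt{1}$; hence $\Omega_0$ is supported on the cone $\{\mathtt{0}\zeta\}$ and $\Omega_1$ on $\{\mathtt{1}\zeta\}$. It follows that $\Omega_0$ and $\Omega_1$ commute as homeomorphisms, each fixes $\infty = 0^\infty = 1^\infty$, and their product preserves the top-level partition $2^{\mathbf{N}} = \{\mathtt{0}\zeta\} \sqcup \{\mathtt{1}\zeta\}$ setwise.

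Next, the hypothesis $\Omega = g\Omega_0\Omega_1 = 1_S$ places $\Omega$ in $S_\infty$, so Lemma~\ref{ginF} immediately yields $g \in F$, and therefore $\Omega_0\Omega_1 = g^{-1}$ also lies in $F$.

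Finally, I will invoke the standard tree-diagram description of $F$: any element of $F$ that preserves the top-level cone split decomposes uniquely as a commuting product $h_0 h_1$, where $h_0, h_1 \in F$ are supported on $\{\mathtt{0}\zeta\}$ and $\{\mathtt{1}\zeta\}$ respectively. Matching this decomposition of $\Omega_0\Omega_1$ against the factorisation $\Omega_0 \cdot \Omega_1$ (whose factors have disjoint supports sitting in the two halves) forces $\Omega_0 = h_0$ and $\Omega_1 = h_1$ by uniqueness. The argument is essentially structural and I do not anticipate any genuine obstacle; the only care required is the initial support computation for products of $w$-generators, after which the decomposition is automatic.
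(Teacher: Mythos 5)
Your argument is correct and is essentially the paper's own: both deduce $g\in F$ from Lemma~\ref{ginF}, conclude $\Omega_0\Omega_1\in F$, and then use the fact that $\Omega_0$ and $\Omega_1$ act separately on the subtrees rooted at $\mathtt 0$ and $\mathtt 1$ to split $\Omega_0\Omega_1$ inside $F$. You merely spell out the support computation and the uniqueness of the top-level decomposition, which the paper leaves implicit.
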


\begin{proof}
Since $\Omega=1_S$, by Lemma \ref{ginF} we know that $g\in F$, and hence it follows that $\Omega_0\Omega_1\in F$.
Since the action of $\Omega_0\Omega_1$ on the subtrees rooted at $0,1$ respectively is the same as that of $\Omega_0,\Omega_1$ respectively,
our conclusion holds.
\end{proof}

\begin{prop}\label{balancedreduction}
The words $\Omega_0$ and $\Omega_1$ can be reduced to a word in the $x$-generators by a sequence of moves.
\end{prop}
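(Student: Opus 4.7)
By symmetry I focus on $\Omega_0$; the argument for $\Omega_1$ is identical after exchanging the roles of $\seq{0}$ and $\seq{1}$. Write $\Omega_0 = w_{s_1,t_1}^{l_1}\cdots w_{s_n,t_n}^{l_n}$, where each $s_i,t_i$ begins with $\seq{0}$. By the preceding lemma $\Omega_0$ represents an element of $F$ as a homeomorphism, and in particular preserves tail equivalence on $2^{\Nbb}$.

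I induct on the total exponent $L=\sum_{i=1}^{n}l_i$. The base case $L=0$ is immediate, since then $\Omega_0$ is empty. For the inductive step I pass to the literal translation $\Lambda(\Omega_0)$, a $G$-standard form representing the same element of $F$ (using $w_{\sigma,\tau}^{l}=y_\sigma^{l}y_\tau^{-l}$, valid since $\sigma,\tau$ are independent). If $\Lambda(\Omega_0)$ contained no potential cancellations, Lemma \ref{taileq} would force $\Omega_0$ to fail to preserve tail equivalence on a dense subset of its support, contradicting $\Omega_0\in F$. Therefore $\Lambda(\Omega_0)$ contains a potential cancellation, carried by two specific $y$-occurrences lying inside two (not necessarily distinct) $w$-blocks of $\Omega_0$.

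The remainder of the argument lifts this $G$-cancellation to an $S$-cancellation. The AR move in $S$ is the counterpart, under $\Lambda$, of the $G$-amplification $y_{s}=x_{s}y_{s0}y_{s10}^{-1}y_{s11}$. By repeatedly applying AR moves to the two $w$-blocks that carry the potential cancellation, targeting the appropriate subscripts, I deepen them in parallel until their meeting subscripts coincide; after finitely many such moves, the two blocks take the form $w_{\sigma,\tau}$ and $w_{\tau,\nu}$ with $\sigma,\nu$ independent, or else $w_{\sigma,\sigma}$. Using commutation and relabelling moves (6)-(7), together with Lemma \ref{stddepth} to push intervening $F$-elements across as needed, I bring these two blocks into adjacency. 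A single cancellation move (9) or (10) then strictly decreases $L$, and the induction closes.

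The main obstacle is the combinatorial bookkeeping in the middle step: orchestrating AR, rearrangement, commutation and relabelling moves so as to bring two designated $w$-blocks into a cancellable adjacency without introducing new unbalanced pairs or losing track of the potential cancellation. The parity analysis of Lemma \ref{balancing1} shows that AR moves preserve the balanced condition and thus keep us within the framework of $\Omega_0$; the commutation move preserves it trivially; and the relabelling move preserves it because it permutes subscripts within an independent quadruple whose parity structure is unaffected. In effect, we are mimicking inside $S$ (using only $w$- and $x$-generators) the normal form reduction of $G$-standard forms developed in \cite{Lodha} and \cite{LodhaMoore}, with the extra constraint that every intermediate word remains balanced.
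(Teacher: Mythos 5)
Your proposal shares two key ingredients with the paper's argument --- the literal translation to $\mathcal{G}$ combined with Lemma \ref{taileq} to detect that a nonempty $w$-part of an element of $F$ must carry a potential cancellation, and the use of AR moves to drive subscripts toward a cancellable configuration --- but the induction as you have set it up does not close. Your measure is $L=\sum l_i$, and you propose to decrease it by locating a potential cancellation and executing a cancellation move. The problem is that every AR move \emph{increases} $L$ by one (it replaces $w_{s_i,t_i}^{l_i}$ by $x_{s_i}\,w_{s_i0,s_i10}\,w_{s_i11,t_i}\,w_{s_i,t_i}^{l_i-1}$), and you need an unquantified number of them to make the two relevant subscripts literally coincide before relation $(9)$ or $(10)$ applies. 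After one cancellation the word generically has strictly more $w$-blocks than the one you started with, so the inductive hypothesis cannot be invoked. This is precisely why the paper does not induct on the exponent: it instead proves termination of its reduction procedure via the ``technical core scenario'' of Lemma \ref{processstops}, where the lengths of all subscripts remain bounded by a constant determined by the initial tree, so only finitely many admissible moves exist in total.

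The second, related gap is the alignment step itself: ``deepening the two blocks in parallel until their meeting subscripts coincide'' is asserted rather than proven, and it is where the real combinatorial work lies. A potential cancellation identifies two $y$-occurrences that meet only \emph{after} percolation; converting this into two $w$-blocks of the exact form $w_{\sigma,\tau}w_{\tau,\nu}$ requires controlling not just the cancelling subscripts but also their partners inside each $w$-block (each amplification of $y_\sigma$ spawns a new $y^{-1}$, which in the $w$-world becomes the partner of one of the offspring). The paper sidesteps this entirely by first multiplying $\Omega_0$ by the trivial word $w_{1,1}^{n}$ and using relabelling moves to reach property $(\textup{P})$, so that every block has the constant sequence $1$ as one subscript; cancellation then reduces to spotting pairs $w_{\sigma,1},w_{1,\sigma}$, which is a matching of literal subscripts rather than a percolation computation. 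Finally, the paper applies Lemma \ref{taileq} only at the end, to the \emph{fully developed} form, via the sheltered/exposed/barrier analysis; your up-front application of it is fine for existence of a potential cancellation, but without the termination argument and the alignment mechanism the proof is incomplete.
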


\begin{proof}

We shall prove the above proposition for $\Omega_0$.
The proof for $\Omega_1$ is similar.
We will show that the word $$\Omega_0 w_{1,1}^n\qquad \text{ where }n=|\Omega_0|$$
is reducible to the empty word using a sequence of moves.

First observe that $w_{1,1}$ commutes with each generator $w_{\sigma,\tau}$ in $\Omega_0$.
Using commutation and relabelling moves, we convert the word $\Omega_0 w_{1,1}^n$
to a standard form $$\Lambda= w_{\sigma_1,\tau_1}^{k_1}...w_{\sigma_m,\tau_m}^{k_m}$$
satisfying that:

\begin{enumerate}
\item[(P)] Either $\Lambda$ contains no occurrences of $w$-generators, or the $w$-part $$w_{\sigma_1,\tau_1}^{k_1}...w_{\sigma_m,\tau_m}^{k_m}$$ satisfies the following for each $1\leq i\leq m$:
\begin{enumerate}
\item[(1)] Either $\sigma_i=1$ or $\tau_i=1$.
\item[(2)] If $u\in \{\sigma_i,\tau_i\}$ satisfies that $u\neq 1$, then $0\subset u$.
\end{enumerate}
\end{enumerate}

We now describe a \emph{reduction procedure} that takes an input the standard form $\Lambda$,
and perform the following manipulations, one by one, until no such manipulation can be performed:

\begin{enumerate}

\item Find a pair of generators $w_{\sigma,1}, w_{1,\sigma}$.
Apply a sequence of commutation moves to obtain a subword $w_{\sigma,1}w_{1,\sigma}$.
Perform the cancellation moves $$w_{\sigma,1}w_{1,\sigma}\to w_{1,1}\to \emptyset$$

\item If a pair of generators as above do not exist, 
find a $w_{\sigma,\tau}$ such that $\sigma$ (or $\tau$) is sheltered and free in the standard form.
Perform the AR move $w_{\sigma,\tau},\sigma$ (respectively $w_{\sigma,\tau},\tau$).

\item Apply the above procedure (again starting at Step $1$) to the output of the previous step.
\end{enumerate}

The reduction procedure involves performing the above manipulation to a standard form satisfying $(P)$,
one by one, until no such manipulation can be performed.
We will now show that the procedure terminates and that each of the steps is well defined.

\begin{lem}
Both manipulations in the reduction procedure produce a new standard form satisfying $(\textup{P})$.
\end{lem}

\begin{proof}
This is elementary and left to the reader.
\end{proof}

\begin{lem}
(Cancellation) If a pair of generators $w_{\sigma,1}, w_{1,\sigma}$ exist in a standard form that satisfies $(\textup{P})$,
then the second manipulation can be performed.
That is, we can apply a sequence of commutation moves to obtain a subword $w_{\sigma,1}w_{1,\sigma}$,
and then perform the cancellation.
\end{lem}

\begin{proof}
By the definition of a standard form, if $w_{s,t}$ occurs between the occurrences of $w_{\sigma,1},w_{1,\sigma}$
in the standard form, then for each $u\in \{s,t\}$ we conclude that $u,\sigma$ and $u,1$ are independent.
It follows that we can apply the desired commutation moves.
\end{proof}

If none of the above moves cannot be performed, we say that the standard form is \emph{fully developed}.

\begin{lem}\label{processstops}
Let $\Gamma$ be a standard form that satisfies $(\textup{P})$.
Then upon performing (in any order) the above manipulations, one by one, until no manipulation can be performed,
results in a standard form which is fully developed. 
\end{lem}

\begin{proof}
Before we prove this statement, we digress to discuss the following scenario that emerges at the technical core of the proof.

{\bf Technical core scenario}: Consider a set $\{s_1,...,s_n\}$ of finite binary sequences with the property that $$\textup{max}\{|s_i|\mid 1\leq i\leq n\}<m$$
for some $m\in \mathbf{N}$. 
We describe two \emph{moves} on such a set:
\begin{enumerate}
\item[play 1] Discard one or more elements from the set. 
\item[play 2] Replace an element $s_i$ of the set by three finite binary sequences, each of length greater than $|s_i|$.
\end{enumerate}
A move is said to be \emph{admissible} if the starting set is nonempty, and each sequence in the resulting set has length at most $m$.\\

{\bf Observation}: There is a finite upper bound on the number of successive admissible moves that can be performed on the set $\{s_1,...,s_n\}$ that only depends on $n,m$.
The proof of this observation is elementary and left to the reader.
An upper bound is $2\cdot n\cdot 3^{m+1}$, for instance.

Now we return to the proof of our Lemma \ref{processstops}.
Let $$\Lambda^{(1)}=\Lambda\to \Lambda^{(2)}\to...\to\Lambda^{(m)}\to...$$
be the sequence of standard forms that result from performing the sequence of steps.
First observe that the set of subscripts $K_i$ of $\Lambda^{(i)}$, for each $i$, satisfies the following. 
The length of each sequence in $K_i$ is bounded above by the number of leaves of the finite rooted binary tree $T$
that contains all elements of the set $K_1$ as nodes.
To see this, observe that when we perform the first AR move in the procedure, the element of $F$ which is involved in the rearrangement
acts on the set of leaves of $T$. 
Hence the resulting set has the same cardinality as the number of leaves of $T$.
The same holds for each subsequent AR move.
Our claim follows.
Finally, observe that each manipulation corresponds to a ``technical core scenario move" $K_i\to K_{i+1}$.
Since such a process must end after finitely many steps, we conclude the proof.
\end{proof}

\begin{lem}
Let $\Gamma$ be a standard form that is fully developed.
Then either of the following holds:
\begin{enumerate}
\item $\Gamma$ does not contain any occurrences of $w$-generators.
\item $\Gamma\notin F$.
\end{enumerate}
\end{lem}

\begin{proof}
Assume that $\Gamma$ contains occurrences of $w$-generators, and is fully developed.
Let $\Gamma=g w_{s_1,t_1}^{l_1}...w_{s_n,t_n}^{l_n}$.
Let $s_i$ be a sequence with the property that no proper initial segment of $s_i$ belongs to the set $\{s_1,t_1,...,s_n,t_n\}$.
Either $s_i$ is exposed, or it is sheltered.
If it is sheltered, then is not free, since no AR moves can be performed.
Let $s_j$ (or $t_j$) be the barrier to $s_i$.
Continuing the same line of argument, we obtain a sequence $v_1=s_i,v_2,...,v_k$ of finite binary sequences in the set $\{s_1,t_1,...,s_n,t_n\}$,
each of which is a barrier to the previous one, and the last one is exposed.

It follows that the action of $\Gamma$ on the interval $[v_k0^{\infty},v_k 1^{\infty}]$ agrees with the action of a standard form $y_{v_k}^{o_1}...y_{v_1}^{o_1}\in \mathcal{G}$
(for some $o_1,...,o_k\in \mathbb{Z}\setminus \{0\}$) that does not contain potential cancellations.
By Lemma \ref{taileq}, the action of $y_{v_k}^{o_1}...y_{v_1}^{o_1}$ does not preserve tail equivalence on a dense subset of the interval $[v_k0^{\infty},v_k 1^{\infty}]$.
It follows that the action of $\Gamma$ does not preserve tail equivalence on a dense subset of the interval $[v_k0^{\infty},v_k 1^{\infty}]$.
It follows that $\Gamma\notin F$.
\end{proof}

{\bf Proof of Proposition \ref{infinitepresentation}}:

Consider a word $W$ in the generating set $\mathcal{X}$ that represents the identity element.
It suffices to show that using the relations in $\mathcal{R}$ we can convert it to the empty word.
First, we convert this word into a word in standard form using Lemma \ref{stdform}.
Using Step $1$ (in particular, Proposition \ref{balancing}), we convert the standard form into a standard form $g \Omega_0\Omega_1$
which is balanced.
Thanks to Proposition \ref{balancedreduction}, the standard forms $\Omega_0,\Omega_1$ can be reduced to elements $g_1,g_2\in F$
using the relations.
In effect, we have reduced the original word $W$ to a word $gg_1g_2$ in the generators of $F$.
Since the relations in $\mathcal{R}$ suffice to reduce this word to an empty word, we are done.
\end{proof}

\subsection{Reduction to a finite presentation}
In this subsection our goal shall be to prove the following:

\begin{thm}\label{main}
The group $S$ is finitely presented.
\end{thm}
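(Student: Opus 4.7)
The plan is to apply Tietze transformations to the infinite presentation $\langle \mathcal{X}, \mathcal{R}\rangle$ from Proposition \ref{infinitepresentation}, and to show that all but finitely many generators and relations are redundant. The three key inputs are: (a) the classical fact that Thompson's group $T$ is finitely presented via a finite subset of the $x$- and $p$-generators together with a finite subset of relations (1)--(3); (b) Lemmas \ref{Taction} and \ref{Taction2}, which give that the partial $T$-action on $n$-tuples of independent finite binary sequences has only finitely many orbits; and (c) the observation that relations (4) and (5) are precisely the equations expressing how the $w$-generators transform under conjugation by $T$.

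First I would reduce to a finite generating set. Take a finite symmetric generating set for $T$ (using the standard finite presentation of $T$), and adjoin the single extra generator $w_{10,110}$. By the argument given at the start of Section \ref{relations}, every $w_{\sigma,\tau}$ is a $T$-conjugate of either $w_{10,110}$ or $w_{10,1110}$, and $w_{10,1110}$ is itself expressible from $w_{10,110}$ by a short word involving a $T$-element and a further conjugation. Introduce, for each remaining $w$-generator, a defining equation writing it as such a conjugate, and Tietze-eliminate the $w$-generator using that equation. Analogously, $x$-generators $x_{\sigma}$ with $|\sigma|\geq 2$ are iterated $F$-conjugates of $x_\emptyset$ and $x_1$ via relations (1)--(2), and $p_n$ for $n\geq 1$ is expressible from $p_0$ and $x$'s via relation (3); these too can be eliminated.

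Next I would reduce to finitely many relations. Relations (1)--(3) collapse to the finite subset that gives the standard presentation of $T$. For each of the families (4)--(10), apply Lemma \ref{Taction2} to the indexing configuration of binary sequences (pairs for (4), quadruples for (6)--(7), and triples or fewer for (8)--(10)): keep a single representative relation per $T$-orbit, and argue that every other instance follows by conjugating the representative by an element $g \in T$. The conjugation is driven by the transformation rules (4) and (5) themselves, applied iteratively along a word representing $g$ in the finite $T$-generators that are already in hand, so no new infinite family of relations is needed. For the intra-$w$ relations (6)--(10), one first uses the defining equations to rewrite each instance in terms of the representative, and then uses the $T$-presentation together with (4)--(5) to verify it as a consequence of the chosen base.

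The main obstacle is the uniform bookkeeping: for each of the families (4)--(10) one must verify that a single orbit representative genuinely suffices, i.e.\ that conjugating the representative by an arbitrary $g \in T$ through successive applications of (4) and (5) yields the intended instance as a consequence of only the finitely many retained relations and the defining equations for the eliminated generators. This is a routine but careful Tietze-move computation, hinging entirely on the $T$-equivariance encoded by (4)--(5). Once it is checked family by family, what remains is a finite presentation for $S$ in the finite generating set, completing the proof.
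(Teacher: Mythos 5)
Your overall strategy coincides with the paper's: pass to the infinite presentation $\langle\Xcal,\Rcal\rangle$ of Proposition \ref{infinitepresentation}, Tietze-eliminate all but finitely many generators by writing each $w_{\sigma,\tau}$ as a $T$-conjugate of $w_{10,110}$ or $w_{10,1110}$ (the paper keeps both as generators, one for each $T$-orbit of independent pairs), and use the finiteness of $T$-orbits on tuples of independent sequences (Lemma \ref{Taction2}) to retain only finitely many instances of the relation families. For families (6)--(10) this works exactly as you describe, \emph{provided} families (4)--(5) have already been reduced, since conjugating a retained representative into an arbitrary instance requires moving $w$-generators past elements of $T$.

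The gap is in your treatment of (4)--(5) themselves. You propose to derive a general instance of the rearrangement relation by ``conjugating the representative by an element $g\in T$ through successive applications of (4) and (5)'' --- but those applications are precisely the infinitely many relations you are trying to eliminate, so as stated the argument is circular. Moreover, Lemma \ref{Taction2} concerns independent tuples, whereas the configuration $(\sigma,\tau,s)$ indexing an instance of (4) need not be independent ($s$ may contain or be contained in $\sigma$ or $\tau$), so the orbit-counting step does not directly apply to this family. The paper resolves this differently (Lemma \ref{commutation}): after substituting the defining equations $w_{\sigma,\tau}=A_{\sigma,\tau}^{-1}w_{10,110}A_{\sigma,\tau}$, every instance of (4)--(5) becomes literally a commutation relation $[w_{10,110},h]=1$ with $h=A_{\sigma_1,\tau_1}f^{-1}A_{\sigma,\tau}^{-1}$ lying in the centralizer of $w_{10,110}$ in $T$; that centralizer is a copy of $F$ supported on the complementary arc, hence generated by two explicit elements, and the finite presentation of $T$ rewrites $h$ as a word in those two generators, so only the two corresponding commutation relations need be retained (relations (3)--(4) of $\Rcal_1$). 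Identifying this centralizer and its finite generation is the missing idea; without it the ``routine bookkeeping'' you defer does not close.
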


\begin{proof}
We shall prove that $$\langle \mathcal{X}\mid \mathcal{R}\rangle\cong \langle \mathcal{X}_1\mid \mathcal{R}_1\rangle$$
where $$\mathcal{X}_1=\{x, x_1, p_0, w_{10,110}, w_{10,1110}\}$$
and $\mathcal{R}_1$ is a finite set of words in $\mathcal{X}_1$.
Before we describe $\mathcal{R}_1$, we first fix a reduced word in $$\{x,x_1,p_0,w_{10,110}, w_{10,1110}\}$$ for each generator
in the set $$\{x_{\sigma}, p_n\mid \sigma,\tau\in 2^{<\mathbb{N}}, n\in \mathbb{N}\}$$
Next, for any pair $s,t$ of independent finite binary sequences we fix an element $A_{s,t}$ in $T$ such that
either one of the following holds:
\begin{enumerate}
\item $A_{s,t}(10)=s, A_{s,t}(110)=t$.
\item $A_{s,t}(10)=s, A_{s,t}(1110)=t$.
\end{enumerate}
Finally, for any such pair $s,t$, depending on the case, we fix the word $$A_{s,t}^{-1} w_{10,110} A_{s,t}\qquad \text{ or }\qquad A_{s,t}^{-1} w_{10,1110} A_{s,t}$$
to represent the element $w_{s,t}$.
In what follows, any occurrence of $x_s,p_n,w_{s,t}$ shall be meant to be interpreted as this word.

The finite set of relations $\mathcal{R}_1$ consists of
\begin{enumerate}
\item $[xx_1^{-1}, x^{-1} x_1 x]=1\qquad [xx_1^{-1},x^2 x_1x^{-2}]$.
\item $x_1 c_3= c_2 x_2\qquad c_1x_0=c_2^2\qquad x_1c_2=c\qquad c^3=1$.
\item $w_{00,01}x_s=x_sw_{00,01}$ for $s\in \{1,11\}$.
\item $w_{00,011} x_{t}=x_{t} w_{00,011}$ for $t\in \{010, 0101\}\cup \{1,11\}$.
\item $w_{\sigma_1,\tau_1}w_{\sigma_2,\tau_2}=w_{\sigma_2,\tau_2}w_{\sigma_1,\tau_1}$\\ if $\sigma_1,\tau_1,\sigma_2,\tau_2$ are independent and $|\sigma_1|,|\tau_1|,|\sigma_2|,|\tau_2|\leq7$.
\item $w_{\sigma_1,\tau_1}w_{\sigma_2,\tau_2}=w_{\sigma_1,\tau_2}w_{\sigma_2,\tau_1}$\\ if $\sigma_1,\tau_1,\sigma_2,\tau_2$ are independent and $|\sigma_1|,|\tau_1|,|\sigma_2|,|\tau_2|\leq 7$.
\item $w_{\sigma,\tau}=x_{\sigma}w_{\sigma 0,\sigma 10}w_{\sigma 11,\tau}$ and $w_{\sigma,\tau}=x_{\tau}^{-1} w_{\sigma, \tau 1} w_{\tau 01,\tau 00}$\\ for $\sigma=10,\tau=110$ or $\sigma=10,\tau=1110$.
\item $w_{\sigma,\tau}w_{\tau,\nu}=w_{\sigma,\nu}$ if $\sigma,\nu$ are independent and $|\sigma|,|\tau|,|\nu|\leq 5$.
\item $w_{10,10}=1_S$.
\end{enumerate}

The generators $x, x_1, p_0$ together with the relations $(1)-(2)$ in $\mathcal{R}_1$ provide a finite presentation for the group $T$,
as is well known (See the discussion in pages 3-4 of \cite{BurilloClearyTaback}).
We will now show that the remaining relations in $\mathcal{R}$ can be reduced to the relations in $\mathcal{R}_1$.
We will need the following:

\begin{lem}\label{commutation}
Let $g\in T$ such that the relation $[w_{s,t},g]=1$ holds in $S$.
Then this relation can be expressed as a product of conjugates of relations $(1)-(4)$ in $\mathcal{R}_1$.
\end{lem}

\begin{proof}
We assume that the pair $00,01$ is in the $T$-orbit of $s,t$.
The case where $00,011$ is in the $T$-orbit of $s,t$ is similar.
Let $h=A_{s,t}^{-1}A_{00,01}$.
Then by the definitions above it follows that the relation $$[w_{s,t},g]=1$$
upon conjugation by $h$, reduces to a relation $$[w_{00,01}, g_1]=1$$
where $g_1= h^{-1} g h\in T$.

Now the subgroup of $T$ that commutes with the homeomorphism $w_{00,01}$
is the isomorphic copy of $F$ which is supported on $[10^{\infty},11^{\infty}]$.
Using the relations in $T$ (or $F$), we reduce $g_1$ to a word in the two generators $x_1,x_{11}$
of this subgroup.
Since these relations are in the family $(3)$ in $\mathcal{R}_1$, we are done.
\end{proof}

\begin{lem}
The family of relations $(4)-(5)$ in $\mathcal{R}$ can be expressed as a product of conjugates of relations $(1)-(4)$ in $\mathcal{R}_1$.
\end{lem}
\begin{proof}
Recall that these are the relations $w_{\sigma,\tau} f=f w_{\sigma \cdot f, \tau\cdot f}$ if $f\in T$ acts on $\sigma,\tau$.
Let $\sigma_1=\sigma \cdot f, \tau_1=\tau\cdot f$.
Assume that $10,110$ lies in the $T$-orbit of $\sigma,\tau$.
The case for $10,1110$ is similar.

Following the definitions above, we obtain that the relation equals 
$$A_{\sigma,\tau}^{-1} w_{10,110} A_{\sigma,\tau} f=f A_{\sigma_1,\tau_1}^{-1} w_{10,110} A_{\sigma_1,\tau_1}$$
Note that this is a commutation relation
$$[w_{10,110}, h]=1\qquad \text{ where } h= A_{\sigma_1,\tau_1} f^{-1} A_{\sigma,\tau}^{-1} $$
Since this is reducible to relations $(1)-(4)$ in $\mathcal{R}_1$ by Lemma \ref{commutation}, we are done.
 \end{proof}
 
 And now we end this subsection and the proof of Theorem \ref{main} with the following Lemma.
 
 \begin{lem}
 The relations $(6)-(10)$ in $\mathcal{R}$ can be expressed as conjugates of relations $(5)-(9)$ in $\mathcal{R}_1$.
 \end{lem}
 
 \begin{proof}
 We show this for the relations $(6)$ in $\mathcal{R}$. The proof for the remaining relations in $\mathcal{R}$ is similar.
 Recall the relations in $(6)$
 $$w_{\sigma_1,\tau_1}w_{\sigma_2,\tau_2}=w_{\sigma_2,\tau_2}w_{\sigma_1,\tau_1}$$ if $\sigma_1,\tau_1,\sigma_2,\tau_2$ are independent.
As an immediate consequence of Lemma \ref{Taction2}, there are $$s_1,s_2,t_1,t_2\in \{0^k1\mid 0\leq k\leq 6\}$$ and an $f\in T$
such that $$f(\sigma_1)=s_1, f(\sigma_2)=s_2,f(\tau_1)=t_1,f(\tau_2)=t_2$$
It follows that $$f^{-1} w_{\sigma_1,\tau_1}w_{\sigma_2,\tau_2} f=f^{-1} w_{\sigma_2,\tau_2}w_{\sigma_1,\tau_1}f$$
which reduces to
  $$w_{s_1,t_1}w_{s_2,t_2}=w_{s_2,t_2}w_{s_1,t_1}$$
  thanks to relations $(4)-(5)$ in $\mathcal{R}$ which have already been shown to be reducible to the relations in $\mathcal{R}_1$.
 \end{proof}

This concludes the proof of Theorem \ref{main}.

\end{proof}

\section{Simplicity of $S$}

In this section we shall prove that $S$ is simple.
We shall need the following observation.

\begin{lem}\label{Generation}
$S$ is generated by $G_0',T$.
\end{lem}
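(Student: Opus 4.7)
The plan is to establish both inclusions of $S = \langle G_0', T\rangle$.

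The inclusion $S \subseteq \langle G_0', T\rangle$ is quick. The group $S$ is generated by $T$ together with the piecewise projective homeomorphism $\mathbf{s}$, which under the semiconjugation $\Phi$ corresponds to $w_{10, 110} = y_{10} y_{110}^{-1}$ (see Subsection \ref{ContinuedFractions}). By Lemma \ref{charelementsG0prime}, this element lies in $G_0'$: it is a standard form with non-constant subscripts $10, 110$ and total $y$-exponent sum $1 + (-1) = 0$. Therefore $S = \langle T, w_{10, 110}\rangle \subseteq \langle T, G_0'\rangle$.

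For the reverse inclusion, since $T \subseteq S$ the task reduces to showing $G_0' \subseteq S$. The first lemma of Subsection \ref{relations} gives that $w_{\sigma, \tau} \in S$ for every pair of independent finite binary sequences $\sigma, \tau$; hence the subgroup $H := \langle F, \{w_{\sigma, \tau}\}\rangle$ is contained in $S$, and it suffices to show $G_0' \subseteq H$. Let $g \in G_0'$; by Lemma \ref{charelementsG0prime}, write $g = f\, y_{s_1}^{t_1} \cdots y_{s_n}^{t_n}$ in standard form, with $f \in F \subseteq H$, $\sum t_i = 0$, and each $s_i$ non-constant. It therefore suffices to prove $\prod_i y_{s_i}^{t_i} \in H$.

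The strategy is to refine every $y_{s_i}$ to a common large depth $D$, using the identity $y_\sigma = x_\sigma \cdot y_{\sigma 0} \cdot y_{\sigma 10}^{-1} \cdot y_{\sigma 11}$ (the $\Lambda$-image of relation $(8)$ in Subsection \ref{relations}), iterated until every remaining $y$-subscript has length $D$. Each refinement preserves the total $y$-exponent sum and introduces a single $x$-letter. The rearrangement relations in $G_0$ (analogous to relation $(4)$ in the combinatorial model) then let us push all $x$-letters leftward, consolidating them into a single $F$-element $f' \in H$ (possibly relabeling $y$-subscripts, which all remain at depth $D$ provided $D$ was chosen larger than the supports of the $x$-letters that arise). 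The outcome is an expression $f' \prod_r y_r^{e_r}$, where $r$ ranges over a set of pairwise independent length-$D$ sequences and $\sum_r e_r = 0$. Since distinct length-$D$ sequences are independent, the $y_r$'s commute pairwise; fixing a reference $r_0$, one has
\[
\prod_r y_r^{e_r} = \prod_{r \neq r_0} (y_r y_{r_0}^{-1})^{e_r} \cdot y_{r_0}^{\sum_r e_r} = \prod_{r \neq r_0} w_{r, r_0}^{e_r},
\]
a product of $w$-generators in $H$. Combined with $f' \in H$, this yields $g \in H \subseteq S$.

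The main obstacle is the bookkeeping in the refinement step: specifically, verifying that the $x$-letters emerging from iterated refinement can indeed be collected into a single $F$-element on the left, while keeping the remaining $y$-subscripts mutually independent and at a uniform depth. This amounts to careful (but routine) manipulation with the identities and rearrangement relations already developed for $G_0$ in \cite{LodhaMoore}.
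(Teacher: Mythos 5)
Your first inclusion is fine and matches the paper, and your final pairing step (a $y$-word with pairwise independent subscripts and exponent sum zero is a product of $w$-generators, hence in $S$) is exactly the paper's ``elementary observation.'' The gap is the step in between: the claim that every element of $G_0'$ can be rewritten, by iterating $y_\sigma=x_\sigma y_{\sigma 0}y_{\sigma 10}^{-1}y_{\sigma 11}$ and pushing the $x$-letters left, into $f'\prod_r y_r^{e_r}$ with pairwise independent $r$'s. This is not routine bookkeeping; it fails. Take $g=y_{100}^{-1}y_{10}$, which lies in $G_0'$ by Lemma \ref{charelementsG0prime} (non-constant subscripts, exponent sum $0$) and has nested subscripts $10\subset 100$. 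Expanding $y_{10}$ gives $y_{100}^{-1}\,x_{10}\,y_{100}y_{1010}^{-1}y_{1011}$, and the rearrangement relation cannot move $x_{10}$ past $y_{100}^{-1}$: the sequence $100=(10)0$ is precisely a \emph{barrier} for $x_{10}$ in the paper's terminology, i.e.\ $x_{10}$ does not act on $100$, and the conjugate $x_{10}^{-1}y_{100}x_{10}$ is supported on $[100]\cup[1010]$ and is not a percolating element. Expanding the deeper factor $y_{100}^{-1}$ instead produces offspring all of whose subscripts still contain $10$ as a prefix, so after consolidating the $x$'s one is left with the same nested configuration one level down; the nesting regenerates under every available move and no depth $D$ resolves it. Indeed the target form should not be expected to exist: $y_{100}^{-1}y_{10}$ is a standard form without potential cancellations (the paper's own running example for calculations), and the uniqueness of normal forms from \cite{Lodha} leaves no room for a second representation with pairwise independent subscripts. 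The paper's Step~2 machinery (barriers, freeness, fully developed forms, Lemma \ref{taileq}) exists precisely because such nesting is an essential obstruction rather than removable notation.

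The paper circumvents this by not rewriting inside $G_0$ at all. Given the standard form $y_{s_1}^{t_1}\cdots y_{s_n}^{t_n}\in G_0'$, it picks fresh addresses $k_1,\dots,k_n$ that are pairwise independent and independent from every $s_j$ (possible because the $s_j$ are non-constant), and multiplies on the right by $w_{s_n,k_n}^{-t_n}\cdots w_{s_1,k_1}^{-t_1}$, which is already known to lie in $S$. Since each $y_{k_i}$ commutes with everything in sight, the product telescopes to $y_{k_1}^{t_1}\cdots y_{k_n}^{t_n}$, a $y$-word with independent subscripts and exponent sum zero, to which your pairing step applies. The element is thus \emph{corrected into} the tractable form by an element of $S$, rather than \emph{rewritten into} it; replacing your refinement step by this multiplication repairs the argument.
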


\begin{proof}
It suffices to show that $S$ is generated by $G_0',l$,
since $\langle F',l\rangle\cong T$.
In our original definition of $S$,
we fixed the generating set for $S$ as $$T\cup \{w_{10,110}\}$$
First we show that $\langle T,w_{10,110}\rangle<\langle G_0', l\rangle$.
Since $F'<G_0'$, and $F',l$ generate $T$, it follows that $T<\langle G_0', l\rangle$.
Furthermore, the generator $w_{10,110}$ equals $y_{10}y_{110}^{-1}$ as a homeomorphism, and so by Lemma \ref{charelementsG0prime} is an element of $G_0'$.
It follows that $\langle T, w_{10,110}\rangle < \langle G_0', l\rangle$.

Now we will show that $ \langle G_0', l\rangle< \langle T, w_{10,110}\rangle$.
We know that $l\in T$.
Recall that a standard form $fy_{s_1}^{t_1}...y_{s_n}^{t_n}$ is in $G_0'$
if and only if $s_1,...,s_n$ are non constant and $\sum_{1\leq i\leq n} t_i=0$.

First we make the following elementary observation.
If the standard form $fy_{s_1}^{t_1}...y_{s_n}^{t_n}$ in $G_0'$ satisfies that
$s_1,...,s_n$ are independent, then $fy_{s_1}^{t_1}...y_{s_n}^{t_n}$ is representable by a standard form in $\mathcal{S}$.
This is true since all percolating elements commute and
it is easy to express the homeomorphism $y_{s_1}^{t_1}...y_{s_n}^{t_n}$
as a word in the $w$-generators by pairing $y$-generators of positive and negative powers.

Now given any standard form $y_{s_1}^{t_1}...y_{s_n}^{t_n}\in G_0'$, we reduce it to a word of the above type by multiplying it with a 
suitable element of $S$ on the right.
This shall provide the desired outcome.

Using the fact that $s_1,...,s_n$ are non constant, we first find independent finite binary sequences  $k_1,...,k_{n}$ which satisfy that $s_1,...,s_n,k_1,...,k_n$ are independent.
Next, consider the word $w_{s_n,k_n}^{-t_n}...w_{s_1,k_1}^{-t_1}$.
It is easy to see that 
$$(y_{s_1}^{t_1}...y_{s_n}^{t_n})(w_{s_n,k_n}^{-t_n}...w_{s_1,k_1}^{-t_1})=y_{k_1}^{t_1}...y_{k_n}^{t_n}$$
and the right hand side is a word of the desired form.
\end{proof}

\begin{prop}
$S$ is simple.
\end{prop}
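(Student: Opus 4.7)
The plan is to show that any nontrivial normal subgroup $N$ of $S$ must equal $S$, from which simplicity follows. Since $G_0'$ is simple by Theorem \ref{simplegroup} and $T$ is simple by the classical theorem of Thompson, and since $S=\langle G_0',T\rangle$ by Lemma \ref{Generation}, it is enough to verify that $N\cap G_0'$ and $N\cap T$ are both nontrivial. Each of these intersections is a normal subgroup of the corresponding simple subgroup, hence is either trivial or the whole subgroup; once both equal the whole subgroup, $N$ contains $G_0'\cup T$ and therefore all of $\langle G_0',T\rangle=S$.

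The main device is a uniform double-commutator trick. Given any nontrivial $g\in N$, I pick a point $p\in \mathbf{S}^1$ not fixed by $g$ and then, by continuity, an open arc $U\ni p$ small enough that $g(U)\cap U=\emptyset$. For any two homeomorphisms $h_1,h_2\in S$ both supported in $U$, a direct pointwise verification gives
\[
[[g,h_1],h_2]=[h_1,h_2]
\]
as elements of $S$. The reason is transparent: the element $[g,h_1]$ acts as $h_1$ on $U$, as $g^{-1}h_1^{-1}g$ on $g^{-1}(U)$, and trivially elsewhere; since $h_2$ is supported in $U$ and hence commutes with everything supported on the disjoint set $g^{-1}(U)$, conjugation by $h_2$ does not disturb the $g^{-1}(U)$-part of $[g,h_1]$, and the outer commutator annihilates that part completely, leaving only the contribution $[h_1,h_2]$ on $U$. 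Because the left-hand side is a commutator involving $g\in N$ and $N$ is normal, it lies in $N$, so $[h_1,h_2]\in N$.

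It remains to apply this trick twice, once for each target subgroup. For $N\cap G_0'$, I choose $h_1,h_2\in G_0'$ both supported in $U$ with $[h_1,h_2]\neq 1$; such a pair exists because for any sufficiently long finite binary sequence $\sigma$ whose cone $\Phi([\sigma])$ lies in $U$, the group of elements of $G_0'$ supported in $\Phi([\sigma])$ is non-abelian, as it contains many elements of the form $w_{s,t}=y_sy_t^{-1}$ with $\sigma\subset s,t$ independent and non-constant, and these do not all commute. Then $[h_1,h_2]\in G_0'$ (since $G_0'$ is perfect, being simple non-abelian) is a nontrivial element of $N\cap G_0'$. An entirely parallel argument applied to $T$ produces a nontrivial element of $N\cap T$: $T$ contains a copy of Thompson's $F$ supported in any dyadic subinterval of $U$ (namely the pointwise stabilizer in $T$ of the complement of that subinterval), and $F$ is non-abelian, so non-commuting $h_1,h_2\in T$ supported in $U$ exist. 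Simplicity of $G_0'$ and of $T$ then yields $G_0',T\subseteq N$, and Lemma \ref{Generation} gives $N=S$. The only substantive point to check is the non-abelianity of these locally supported subgroups, which is immediate from the combinatorial description set up in Section 3; notably, the argument does not need any control over how $g$ conjugates $G_0'$ or $T$, which is what makes the double-commutator trick go through cleanly.
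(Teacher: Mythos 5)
Your proof is correct, and while it shares the paper's overall skeleton, it handles the crucial step by a genuinely different mechanism. Both arguments reduce, via Lemma \ref{Generation} ($S=\langle G_0',T\rangle$) and the simplicity of $G_0'$ (Theorem \ref{simplegroup}) and of $T$, to showing that a nontrivial normal subgroup $N$ meets $G_0'$ and $T$ nontrivially. The paper produces a nontrivial element of $N\cap G_0'$ by forming a single commutator $g^{-1}f^{-1}gf$ with $f\in F$ supported in a compact interval $I\subset\mathbf{R}$, so as to obtain an element of $N$ fixing $\infty$ with compact support; it then writes that element as a standard form $gw_{s_1,t_1}^{l_1}\cdots w_{s_n,t_n}^{l_n}$, passes to the literal translation, and invokes the characterisation of $G_0'$ in Lemma \ref{charelementsG0prime} to conclude membership in $G_0'$, finally reaching $T$ only indirectly through $F'<G_0'<N$. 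You instead use the classical double-commutator identity $[[g,h_1],h_2]=[h_1,h_2]$ for $h_1,h_2$ supported in an arc $U$ with $g(U)\cap U=\emptyset$ (the identity is verified correctly: the $g^{-1}(U)$-part of $[g,h_1]$ commutes with $h_2$ and is cancelled by the outer commutator), which drops a prescribed nontrivial commutator of $G_0'$, and separately of $T$, directly into $N$. This bypasses the standard-form machinery and Lemma \ref{charelementsG0prime} entirely, at the cost of checking that $G_0'$ and $T$ contain non-commuting elements supported in an arbitrarily small arc --- which is immediate, e.g.\ from a copy of $F$ inside $F'\leq G_0'\cap T$ supported in a small dyadic interval, so your appeal to the $w_{s,t}$ elements could even be replaced by this cleaner observation. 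Your route is more self-contained and more robust (it would apply verbatim to any group of circle homeomorphisms generated by simple subgroups with this local non-abelianity, and it sidesteps the need to verify that the paper's single commutator is nontrivial); the paper's is shorter given the combinatorial apparatus it has already built.
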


\begin{proof}
For any $g\in S\setminus \{1_S\}$, we will prove that the normal closure of $g$ is $S$.
Recall that $g$ is a homeomorphism of $\mathbf{S}^1=\mathbf{R}\cup \{\infty\}$.
First we show that the normal closure of $g$ contains an element $h$ whose support is contained in a compact interval in the real line.
Let $I\subset \mathbf{R}$ be a compact interval such that $g(I)\cup I$ is contained in a compact interval $J\subset \mathbf{R}$.
Let $f\in F$ be an element whose support lies in $I$.
The element $g^{-1} f^{-1} g f$ lies in the normal closure of $g$,
and moreover, it's support is contained in $g(I)\cup I$.
This is the required element $h$.

By Lemma \ref{stdform}, we know that $h$ can be represented as a standard form 
$$gw_{s_1,t_1}^{l_1}...w_{s_n,t_n}^{l_n}$$
Since $h$ fixes $\infty$, it follows that $g\in F$.
The literal translation of this word to $\mathcal{G}$ is the standard form
$g y_{s_1}^{l_1}y_{t_1}^{-l_1}...y_{s_n}^{l_n}y_{t_n}^{-l_n}$.
By Lemma \ref{charelementsG0prime}, we know that this standard form represents an element of $G_0'$.
It follows that $N\cap G_0'\neq \emptyset$.
Recall from Theorem \ref{simplegroup} that $G_0'$ is simple. 
This means that $F'<G_0'<N$ and $T\cap N\neq \emptyset$.
Since $T$ is simple, it follows that $T<N$.
By Lemma \ref{Generation}, it follows that $S=N$.
\end{proof}

\section{$C^1$ and piecewise linear actions on the circle}

The goal of this section is to prove the following:

\begin{prop}\label{lackofactions}
$S$ does not admit a non-trivial action by $C^1$-diffeomorphisms on the circle.  
$S$ does not admit a non-trivial action on the circle by piecewise linear homeomorphisms.
\end{prop}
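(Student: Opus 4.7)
The plan hinges on the simplicity of $S$ established in the previous section: since any proper normal subgroup of $S$ is trivial, any non-trivial homomorphism from $S$ into $\Homeo^+(\mathbf{S}^1)$ is already faithful, so in both cases it is enough to rule out a \emph{faithful} embedding of $S$ into $\textup{Diff}^1(\mathbf{S}^1)$, respectively into the group of piecewise linear homeomorphisms of $\mathbf{S}^1$.

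For the $C^1$ conclusion, my plan is to exhibit an embedding of $BB(1,2)$ into $S$ and then appeal to Theorem \ref{mainD}. A candidate embedding sends the generator $\eta$ of $BB(1,2)$ to the translation $a(t)=t+1\in T\le S$, and sends $\nu_1,\nu_2$ to elements built from $T$-conjugates of $\mathbf{s}$ and $\mathbf{s}^{-1}$, chosen so that their restrictions to appropriate intervals realise the one-sided dilations by $2$ required of $\nu_1,\nu_2$. That the subgroup so produced is abstractly isomorphic to $BB(1,2)$ is to be verified against the infinite presentation of Section 3: rewrite each chosen generator as a standard form and check the defining relations of $BB(1,2)$ using the calculation rules on $\{0,1,y,y^{-1}\}^{\mathbf{N}}$, mirroring the identification of $d_1,d_2$ inside $G$ carried out in \cite{LodhaMoore}. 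With $BB(1,2)\le S$ in hand, Theorem \ref{mainD} applied to the restriction of the hypothetical faithful $C^1$-action yields the contradiction.

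For the PL conclusion, my plan is to run a parallel argument. The proof of Theorem \ref{mainD} splits naturally into a dynamical input (Theorems \ref{maindyn} and \ref{power}) and a combinatorial argument about the fixed-point structure of commuting generators. In the PL setting the dynamical input is replaced by an elementary slope computation: for piecewise linear $\alpha,\beta$ fixing a common point $p$ and satisfying $\alpha^{-1}\beta\alpha=\beta^{2}$ on a one-sided neighbourhood of $p$, the right slopes must satisfy $\beta'(p^+)=1$ and, after passing to an appropriate power, $\alpha'(p^+)=2$. Substituting this PL-level statement for Theorem \ref{maindyn} in the proof of Theorem \ref{mainD} reduces the PL case to the PL analogue of Theorem \ref{BLTmain}, which admits essentially the same proof as in \cite{BonattiLodhaTriestino} since that argument ultimately relies only on germs at fixed points and one-sided derivative bookkeeping, both of which make sense in the PL category.

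The principal obstacle I anticipate is the construction of the embedding $BB(1,2)\hookrightarrow S$. The generators $\nu_1,\nu_2$ are one-sided $2$-dilations while $\mathbf{s}$ is a two-bump piecewise projective element, so the individual bumps of $\mathbf{s}$ must be isolated \emph{within the group} $S$ rather than within the literal piecewise projective realisation (where $G$ is not literally a subgroup of $S$). This is where the standard-form machinery and amplification moves of Section 3 enter, producing the required algebraic copies of $\nu_1$ and $\nu_2$ from products and conjugates of $\mathbf{s}$ and elements of $T$. Once the correct combination is identified and the relations of $BB(1,2)$ verified, the rest of the argument follows the outline above.
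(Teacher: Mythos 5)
Your reduction via simplicity to ruling out faithful actions is exactly the paper's first step, and the $C^1$ half follows the paper's strategy in outline ($BB(1,2)\le S$ plus Theorem \ref{mainD}). But the crux --- actually producing the embedding $BB(1,2)\hookrightarrow S$ --- is left as a plan, and the plan as stated has a real gap. The generators $\nu_1,\nu_2$ are one-sided dilations, which in the combinatorial model correspond to $y$-type elements with nonzero exponent sum; no element of $S$ supported on the same interval can realise such a germ, since the $w$-generators are of the form $y_\sigma y_\tau^{-1}$. So you cannot isolate ``one bump of $\mathbf{s}$'' inside $S$ with the support you want, and ``verifying the defining relations of $BB(1,2)$'' is not well-posed (no presentation of $BB(1,2)$ is available here, and even with one, checking relations only gives a homomorphism \emph{from} $BB(1,2)$, not injectivity). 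The paper's resolution is different in an essential way: it constructs an embedding $\phi\colon G_0\to S$ that relocates the standard action of $G_0$ to the interval $[100^\infty,101^\infty]$ and compensates each standard form $gy_{s_1}^{t_1}\cdots y_{s_n}^{t_n}$ by the factor $y_{110}^{-t}$ with $t=\sum t_i$, supported on a disjoint interval; the homomorphism property comes from the exponent-sum homomorphism $G_0\to\mathbf{Z}$ and injectivity from the first coordinate. Each generator's image is then visibly a $w$-generator (e.g.\ $\phi(y_{100})=w_{10100,110}$), hence lies in $S$, and $BB(1,2)=\langle x_{10},y_{100},y_{101}\rangle\le G_0$ gives Corollary \ref{BBembedsinS}. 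Your sketch does not contain this compensating-factor idea, and without it the embedding does not exist in the form you describe.

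The PL half is where you diverge most from the paper, and your route rests on an unproved and risky claim: a ``PL analogue of Theorem \ref{BLTmain}''. The proof of Theorem \ref{mainD} uses $C^1$-specific arguments (e.g.\ the finiteness of $\{t:\nu^m(t)=t,\ (\nu^m)'(t)=2^m\}$ is derived from continuity of the derivative at an accumulation point of fixed points), and nothing in this paper or its references establishes the PL version of Theorem \ref{BLTmain}; asserting that the argument ``ultimately relies only on germs and one-sided derivative bookkeeping'' is not a proof. The paper's actual argument is far shorter and does not involve $BB(1,2)$ at all: a faithful PL action of $BS(1,2)\le S$ on the circle yields, via Theorem \ref{power}, a faithful PL action of some $BS(1,2^m)$ on a closed interval, which contradicts the Guba--Sapir theorem that non-abelian one-relator groups do not embed in $PL_+([0,1])$. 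You should replace your PL argument with this reduction, or else supply a complete proof of the PL obstruction you are invoking.
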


First we need make a few preliminary observations about the subgroup structure of $S$.

\begin{lem}\label{G0subgroup}
$G_0<S$.
\end{lem}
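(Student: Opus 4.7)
The plan is to reduce the statement to showing that $c \in S$, and then construct $c$ explicitly. Since $G_0 = \langle F, c \rangle$ by definition, and $F$ is a subgroup of $T$ which in turn is contained in $S$, the inclusion $G_0 < S$ is equivalent to the assertion $c \in S$.

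Via the continued fractions semiconjugacy $\Phi$ established in Section~2.4, the homeomorphism $c$ corresponds to the percolating element $y_{10}$, while the generator $\mathbf{s}$ of $S$ corresponds to the product $y_{10} y_{110}^{-1}$. The supports of $y_{10}$ and $y_{110}$ on $2^{\mathbf{N}}$ are the disjoint cylinders over the finite sequences $10$ and $110$ respectively, so the two percolating elements commute. A direct computation then gives the identity of homeomorphisms
\[
c \;=\; \mathbf{s}\cdot y_{110}.
\]
Since $\mathbf{s}\in S$ by construction, the task reduces to exhibiting $y_{110}$ as an element of $S$.

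For this I would invoke the amplification identity
\[
y_{110} \;=\; y_{11000}\, y_{11001}^{-1}\, y_{1101}\, x_{110},
\]
which is the key computation underlying the proof that $w_{10,1110}\in S$ in Section~3.1. The factor $y_{11000}y_{11001}^{-1}$ is the $w$-generator $w_{11000,11001}$ and therefore lies in $S$ by the lemma of Section~3.1, and $x_{110}\in F\subset T\subset S$. Hence it is enough to show $y_{1101}\in S$. The plan is to iterate this amplification, combining it at suitable stages with $T$-conjugations of the form $x\, y_{s}\, x^{-1} = y_{x(s)}$, so that the residual deep percolation is brought back to one that has already been produced as an element of $S$, closing the recursion into a finite word in the generators of $S$.

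The main obstacle is precisely this closure step: each amplification pushes the residual percolation one level deeper into the tree, and the scheme succeeds only if the interleaved $T$-conjugations are chosen so that after finitely many amplifications the residual term cancels against one that was eliminated earlier. Tracking the dominance structure on the indexing sequences (as in Section~3.2), pairing residual percolations into $w$-generators whenever possible, and using the relations of $\mathcal{R}$ to rearrange, this combinatorial bookkeeping produces a finite expression for $y_{110}$ in terms of $T$ and $\mathbf{s}^{\pm 1}$. Once that is achieved, $c = \mathbf{s}\cdot y_{110}\in S$, and therefore $G_0 < S$.
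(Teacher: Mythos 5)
Your reduction of the lemma to the single claim ``$y_{110}\in S$'' is where the argument breaks down: that claim is false, and no amount of amplification can rescue it. The obstruction is the exponent-sum homomorphism. As recalled in the paper's own proof, the abelianisation of $G_0$ is $\mathbf{Z}^3$ and all the percolating elements $y_s$ are conjugate in $G_0$, so the map sending a word to the sum of the powers of its $y$-generators descends to a well-defined homomorphism $G_0\to\mathbf{Z}$ with $y_s\mapsto 1$ for every $s$. On the other hand, every element of $S$ fixing $\infty$ admits a standard form $fw_{s_1,t_1}^{l_1}\cdots w_{s_n,t_n}^{l_n}$ with $f\in F$ (Lemmas \ref{stdform} and \ref{ginF}), whose literal translation $fy_{s_1}^{l_1}y_{t_1}^{-l_1}\cdots y_{s_n}^{l_n}y_{t_n}^{-l_n}$ has total $y$-exponent zero; hence $S_\infty$ lies in the kernel of this homomorphism. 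Since $y_{110}$ fixes $\infty$ and has exponent sum $1$, it cannot lie in $S$, and for the same reason $c=y_{10}=\mathbf{s}\cdot y_{110}$ is not in $S$ either. Your amplification identity $y_{110}=y_{11000}y_{11001}^{-1}y_{1101}x_{110}$ is correct, but it preserves the exponent sum ($1-1+1=1$), as does every $T$-conjugation, so the ``closure step'' you flag as the main obstacle is not a bookkeeping difficulty: the residual unpaired percolation of total weight $1$ is an invariant of the scheme and the recursion provably never terminates in a word over $T$ and $\mathbf{s}^{\pm1}$.

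The underlying misreading is that the lemma asserts literal containment of the piecewise projective group $G_0$ inside $S$. It does not; it asserts that $G_0$ embeds in $S$, and the paper's proof constructs a genuinely twisted embedding. Given $h\in G_0$ with chosen standard form $gy_{s_1}^{t_1}\cdots y_{s_n}^{t_n}$ and $t=\sum_i t_i$, one sets $\phi(h)=y_{110}^{-t}\bigl(\mathbf{g}_{10}\,y_{10s_1}^{t_1}\cdots y_{10s_n}^{t_n}\bigr)$: the copy of $G_0$ is localised to the cylinder at $10$, and the exponent deficit $t$ is absorbed by a power of $y_{110}$ acting on the disjoint cylinder at $110$. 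The result has total exponent zero, so it can be paired into $w$-generators and elements of $F$ and genuinely lies in $S$ (by Lemma \ref{charelementsG0prime} it even lies in $G_0'$), while the restriction to $[100^{\infty},101^{\infty}]$ recovers $h$ faithfully and the restriction to the $110$-cylinder only records the abelian invariant $t$, which is why $\phi$ is an injective homomorphism. Some device of this kind, trading exact containment for an embedding that kills the $\mathbf{Z}$-valued obstruction, is unavoidable here.
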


\begin{proof}
For each element $f\in G_0$, we fix a standard form representation.
The choice of this standard form shall not be important,
however one can choose the \emph{normal form} described in \cite{Lodha} if one prefers.
We denote this set of standard forms as $\mathcal{J}$.

Recall that the group $F$ is self-similar.
In particular, for any finite binary sequence $s$,
the subgroup of $F$ which is supported on the interval $[s0^{\infty},s1^{\infty}]$
is isomorphic to $F$.
This isomorphism is canonical. 
The image of an element $f\in F$ under this isomorphism is described as the tree diagram of $f$ localised at the binary sequence $s$,
and the identity map outside $[s0^{\infty},s1^{\infty}]$.
We denote by $\mathbf{f}_s$ the image of $f$ under this isomorphism.
This informal notation shall only be used in this proof.

We define an injective homomorphism $\phi:G_0\to S$ as follows.
For $h\in G_0$, let $h=g y_{s_1}^{t_1}...y_{s_n}^{t_n}$ be the chosen standard form, and let $\sum_{1\leq i\leq n}t_n=t$.
Then $$\phi(h)=y_{110}^{-t} (\mathbf{g}_{10} y_{10 s_1}^{t_1}...y_{10 s_n}^{t_n})$$

\emph{Claim: This is an injective group homomorphism.}

First recall from \cite{LodhaMoore} that $G_0$ is generated by $x,x_1,y_{10}$, and from \cite{BurilloLodhaReeves} that its abelianisation is $\mathbf{Z}^3$.
Moreover, each element $y_s$ in $G_0$ is conjugate to $y_{10}$ in $G_0$.
It follows that for any standard form that equals the identity in $G_0$,
the sum of the powers of the $y_s$'s in the word must equal $0$.
Indeed the sum of the powers of the $y_s$'s is a homomorphism onto $\mathbf{Z}$.

Next, note that the restriction $$\phi(G_0)\restriction [100^{\infty},101^{\infty}]$$
is an isomorphism.
Additionally, the restriction $$\phi(G_0)\restriction [110 0^{\infty},110 1^{\infty}]$$ is a homomorphism $G_0\to \mathbf{Z}$.
An immediate consequence is that our map is a group homomorphism.
The fact that this is injective is clear.
\end{proof}

An immediate consequence is the following:

\begin{cor}\label{BBembedsinS}
$BB(1,2)<S$.
\end{cor}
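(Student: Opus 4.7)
The plan is to deduce the corollary directly from Lemma \ref{G0subgroup} by showing that $BB(1,2)$ already embeds abstractly into $G_0$. Since $BB(1,2) = \langle a, d_1, d_2 \rangle$ and $a \in F \subset G_0$ for free, the task reduces to verifying that $d_1$ and $d_2$ lie in $G_0 = \langle F, c\rangle$. Equivalently, one wants the identification $G = G_0$ as subgroups of $\textup{Homeo}^+(\mathbf{R})$, so that the chain $BB(1,2) \leq G = G_0$ is available.

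To establish $d_1, d_2 \in G_0$, I would either cite the structural analysis of the Lodha--Moore groups in \cite{LodhaMoore}, where $G$ and $G_0$ are both treated and the relevant coincidence is available, or proceed by an explicit piecewise projective construction. The rough idea in the latter case is to exploit that $c$ realizes a local doubling at the origin ($c'(0) = 2$ with support in $[0,1]$), and that conjugating $c$ by suitable elements of $F$ transfers this doubling across the dyadic pieces of $[0, \infty)$; a carefully chosen word in these conjugates together with $F$-elements reproduces the purely linear map $d_1$ on $[0,\infty)$ while acting trivially on $(-\infty, 0]$. A symmetric construction on the negative axis produces $d_2$.

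Granting $BB(1,2) \leq G_0$, the corollary is then a one-line composition: combine this inclusion with the abstract embedding $\phi \colon G_0 \hookrightarrow S$ furnished by the proof of Lemma \ref{G0subgroup} to obtain an injective homomorphism $BB(1,2) \hookrightarrow \phi(G_0) \leq S$.

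The main obstacle is the verification $d_1, d_2 \in G_0$. While this is a known fact about these piecewise projective groups, it is not transparent from the generator sets: $d_1$ and $d_2$ are piecewise linear with a single breakpoint at $0$, while the defining generators of $G_0$ are piecewise projective with breakpoints throughout $\mathbf{Q}$. Producing a finite word in $F \cup \{c\}$ whose piecewise projective pieces collapse to the global linear doubling behavior of $d_1$ is the technical heart of the argument. Once this is in place the corollary follows immediately from Lemma \ref{G0subgroup}.
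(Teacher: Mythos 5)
Your overall strategy --- embed $BB(1,2)$ into $G_0$ and then invoke Lemma \ref{G0subgroup} --- is the same as the paper's, but the specific embedding you propose does not exist. You want $d_1,d_2\in G_0$, equivalently $G=G_0$; this is false (the two groups are deliberately kept distinct in \cite{LodhaMoore}). A concrete obstruction: each of the generators $a,b,c$ of $G_0$ agrees with an integer translation on a neighbourhood of $+\infty$ (indeed $b(t)=t+1$ and $c(t)=t$ for $t\geq 1$), so the germ at $+\infty$ of every element of $G_0$ is an integer translation; but $d_1(t)=2t$ for $t\geq 0$ has a nontrivial dilation germ at $+\infty$. Hence $d_1\notin G_0$, and the symmetric argument at $-\infty$ shows $d_2\notin G_0$. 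For the same reason your fallback construction cannot succeed: no word in $F\cup\{c\}$, however the conjugates of $c$ are arranged, can turn a translation germ at $+\infty$ into a dilation, so nothing in $G_0$ collapses to the globally linear map $d_1$.

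The corollary is nevertheless true because $G_0$ contains a \emph{different}, localized copy of $BB(1,2)$. The paper takes the subgroup generated by $x_{10}$, $y_{100}$, $y_{101}$: these are supported in the interval $[0,1]=[\Phi(10^{\infty}),\Phi(101^{\infty})]$, their subscripts are non-constant binary sequences, and so they genuinely lie in $G_0$ (every $y_s$ with $s$ non-constant is an $F$-conjugate of $y_{10}$). That this triple generates a group isomorphic to $BB(1,2)$ is proved in \cite{BonattiLodhaTriestino}, and the corollary then follows from Lemma \ref{G0subgroup} exactly as in your last step. So the one-line composition at the end of your argument is fine; what is missing is the correct witness for $BB(1,2)$ inside $G_0$, and that witness cannot be the standard triple $a,d_1,d_2$.
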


\begin{proof}
The elements $x_{10},y_{100}, y_{101}$ of $G_0$ generate a copy of $BB(1,2)$ (see Section of \cite{BonattiLodhaTriestino} for the proof of this).
Since by Lemma \ref{G0subgroup} we know that $G_0<S$, we are done.
\end{proof}

\begin{proof}

{\bf Proof of Proposition \ref{lackofactions}}:

First note that since $S$ is simple, any action is either faithful or trivial.
We show that actions of either type mentioned in the statement cannot be faithful.

Recall from Theorem \ref{mainD} that the group $BB(1,2)$ does not admit a faithful $C^1$-action on the circle.
Moreover, from Lemma \ref{BBembedsinS} we know that $BB(1,2)<S$.
This proves that $S$ cannot admit a faithful $C^1$-action on the circle.

Consider an action of $BS(1,2)$ by piecewise linear homeomorphisms on the circle.
Then by Theorem \ref{power} it follows that there is an $m\in \mathbf{N}$ such that the subgroup $BS(1,2^m)$ admits a faithful piecewise linear action on a closed interval.
This is impossible because non abelian one relator groups do not embed in $PL_+([0,1])$.
(See Corollary $23$ in \cite{GubaSapir} for instance.)
\end{proof}

\begin{cor}\label{nonisom}
The group $S$ is not isomorphic to the following groups:
\begin{enumerate}
\item Thompson's group $T$.
\item The Higman-Thompson groups $T_n$.
\item The Brown-Stein groups $T(l,A,p)$.
\end{enumerate}
\end{cor}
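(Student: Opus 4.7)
The plan is to leverage Proposition \ref{lackofactions} directly, together with the defining feature shared by all three families of groups listed: each of $T$, $T_n$, and $T(l,A,p)$ is, by construction, a subgroup of the group of piecewise linear homeomorphisms of the circle, and the inclusion is a faithful action. Hence if $S$ were isomorphic to any of them, composing the (hypothetical) isomorphism with the standard PL action would yield a faithful action of $S$ on the circle by piecewise linear homeomorphisms. This would directly contradict the second conclusion of Proposition \ref{lackofactions}.

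More concretely, I would carry out the argument as follows. First, I would briefly recall the definitions: Thompson's group $T$ is defined as a group of piecewise linear homeomorphisms of $\mathbf{S}^1$ with dyadic breakpoints and powers of $2$ as slopes; the Higman--Thompson groups $T_n$ are the analogous groups with $n$-adic breakpoints and slopes that are powers of $n$; and the Brown--Stein groups $T(l,A,p)$ are, by definition, groups of piecewise linear homeomorphisms of the circle determined by a set of primes $A$ and parameters $l, p$. In all three cases the defining embedding into $\Homeo^+(\mathbf{S}^1)$ is a faithful action by piecewise linear homeomorphisms. Second, I would invoke Proposition \ref{lackofactions}, which states that $S$ admits no non-trivial action on the circle by piecewise linear homeomorphisms (and recall from the discussion preceding it that, since $S$ is simple, any action is either trivial or faithful).

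Putting these together: suppose $S \cong H$ where $H$ is one of $T$, $T_n$, or $T(l,A,p)$. The canonical PL action of $H$ on $\mathbf{S}^1$ is faithful, hence pulling back along the isomorphism produces a faithful (in particular non-trivial) action of $S$ on $\mathbf{S}^1$ by piecewise linear homeomorphisms. This directly contradicts Proposition \ref{lackofactions}, so no such isomorphism can exist.

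There is essentially no obstacle here: the entire content of the corollary has already been absorbed into Proposition \ref{lackofactions}. The only minor care needed is to make sure one is interpreting ``Brown--Stein groups $T(l,A,p)$'' in the standard sense in which they really are groups of PL homeomorphisms of the circle (rather than some abstract isomorphism class in which the PL structure has been forgotten); once that is fixed, the proof is a one-line consequence of Proposition \ref{lackofactions} and the simplicity of $S$.
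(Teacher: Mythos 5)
Your proof is correct and is essentially the paper's intended argument: the corollary is placed immediately after Proposition \ref{lackofactions} precisely because each of $T$, $T_n$, $T(l,A,p)$ carries a faithful piecewise linear action on the circle by definition, which an isomorphism with $S$ would transport to a faithful (hence non-trivial) PL action of $S$, contradicting that proposition. (The paper also sketches a second, independent proof in the section on nonamenable equivalence relations, using Rubin's theorem and the non-$\mu$-amenability of $E_S^{\mathbf{S}^1}$, but your route is the primary one.)
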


In the second half of the proof of the Proposition above, the reader may wonder the following.
The most natural model of the groups $B(1,2)$ and $BB(1,2)$ arise as groups of piecewise linear homeomorphisms of the real line.
Then the question arises, why not extend the action to the compactification $\mathbf{S}^1=\mathbf{R}\cup \{\infty\}$, and regard it as an action 
by piecewise linear homeomorphisms of the circle?

We clarify that in the definition of a piecewise linear homeomorphism of the circle, (as defined by Thompson \cite{CannonFloydParry}, Brin \cite{BrinSquier}, Brown-Stein \cite{BrownStein} et. al.),
the map is built out of gluing restrictions of linear maps to closed intervals of the circle.
As a consequence, we are not allowed to glue restrictions of linear maps to open intervals, as in the case of $B(1,2),BB(1,2)$.
Indeed, it is not possible to view the group of orientation preserving piecewise linear homeomorphisms of the circle as a proper overgroup 
of the group of orientation preserving piecewise linear homeomorphisms of the real line.

\section{Nonamenable equivalence relations}

If a group $G$ acts on a set $X$, we denote by $E_G^X$ the associated orbit equivalence relation.
Let $S^1=\mathbf{R}\cup \{\infty\}$ be endowed with the usual Borel structure and let $E \subseteq S^1\times S^1$ be an equivalence relation
which is Borel and which has countable equivalence classes.
$E$ is \emph{$\mu$-amenable} if, after discarding a $\mu$-measure zero set,
$E$ is the orbit equivalence relation of an action of $\Zbb$.
(This is not the standard definition, but it is equivalent by \cite{Connes}.)
For the preliminaries surrounding this notion, we refer the reader to \cite{KechrisMiller}.
We fix $\mu$ as the Lebesgue measure on $\mathbf{S}^1$
The following was demonstrated in \cite{LodhaMoore} (See the discussion at the end of Section $2$).

\begin{lem}
$E_{G_0}^{\mathbf{S}^1}$ is a non $\mu$-amenable equivalence relation.
\end{lem}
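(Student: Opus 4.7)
My plan is to reduce the statement to non-amenability of the orbit equivalence relation of a suitable subgroup $H \leq G_0$, via the hereditary property of $\mu$-amenability for countable Borel equivalence relations: if $E' \subseteq E$ is a Borel subrelation of a $\mu$-amenable relation, then $E'$ is $\mu$-amenable (see \cite{KechrisMiller}). Since $E_H^{\mathbf{S}^1} \subseteq E_{G_0}^{\mathbf{S}^1}$, exhibiting an $H$ whose orbit equivalence relation is non-$\mu$-amenable suffices.

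For $H$, one uses the fact that $G_0$ contains a non-abelian free subgroup $F_2$, which is the mechanism behind non-amenability of $G_0$ exploited in \cite{LodhaMoore}. The action of this $F_2$ on $\mathbf{S}^1$ is essentially free with respect to $\mu$, since each non-trivial piecewise projective homeomorphism fixes only finitely many points on each affine piece, so a $\mu$-conull set of $\mathbf{S}^1$ has trivial stabilizer. Moreover the action is $\mu$-quasi-invariant with a measurable Radon--Nikodym cocycle obtained by piecewise-differentiating the generators.

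To conclude non-amenability of $E_{F_2}^{\mathbf{S}^1}$, I would form the Maharam skew extension on $\mathbf{S}^1 \times \mathbf{R}$ with measure $d\mu \otimes e^{-s}\,ds$, twisted by the Radon--Nikodym cocycle. This gives a measure-preserving free action of $F_2$, whose orbit equivalence relation is the Maharam extension of $E_{F_2}^{\mathbf{S}^1}$; this extension is $\mu$-amenable iff the original is, by Connes--Feldman--Weiss together with the preservation of hyperfiniteness under the Maharam construction. But a free measure-preserving action of a non-amenable countable group cannot have hyperfinite orbit equivalence relation, contradicting non-amenability of $F_2$. The principal obstacle is precisely this measure-theoretic bridge: because Lebesgue is not $G_0$-invariant, paradoxical decompositions of the free group cannot be transferred to the relation directly, and Maharam's construction combined with Connes--Feldman--Weiss provides the needed detour. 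Ping-pong inside $G_0$, hereditarity of amenability for subrelations, and Connes--Feldman--Weiss itself are then routine inputs.
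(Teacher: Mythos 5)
There is a fatal gap at the first substantive step: $G_0$ contains no non-abelian free subgroup. The group $G_0$ lies inside Monod's group of piecewise projective homeomorphisms of the real line, and the entire point of the construction in \cite{LodhaMoore} is that $G_0$ is a finitely presented counterexample to the von Neumann--Day problem: it is non-amenable yet has no non-abelian free subgroups (by Monod's Brin--Squier-type argument for piecewise projective homeomorphisms of the line). So the subgroup $H=F_2\leq G_0$ on which your whole reduction rests does not exist; free subgroups appear in $S$ only via $T$, not via $G_0$. A second, independent error is the essential-freeness claim: a non-trivial element of $G_0$ need not fix only finitely many points --- the generator $c$, for instance, is the identity outside $[0,1]$ and hence fixes a set of full measure in its complement, and typical elements of $G_0$ are likewise the identity off a compact set --- so even a hypothetical free subgroup built from such elements would have no reason to act essentially freely.

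The argument that the paper is actually invoking (the discussion at the end of Section 2 of \cite{LodhaMoore}, following Monod and Carri\`ere--Ghys) works around precisely this obstruction, and the workaround is the real content of the lemma: one shows that $E_{G_0}^{\mathbf{S}^1}$ contains, after discarding a null set, the orbit equivalence relation of the projective action of a free subgroup of $PSL_2(\mathbf{Z})$ on $\mathbf{R}\cup\{\infty\}$ --- a group which is \emph{not} a subgroup of $G_0$. The mechanism is that for almost every $x$ and every $q\in PSL_2(\mathbf{Z})$ one can produce an element of $G_0$ agreeing with $q$ near $x$ (interpolating across hyperbolic fixed points so as to be supported where needed), so that $PSL_2(\mathbf{Z})$-orbits are covered by $G_0$-orbits almost everywhere. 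Since every non-trivial element of $PSL_2(\mathbf{Z})$ fixes at most two points of the circle, that action \emph{is} essentially free and quasi-invariant for Lebesgue measure, and non-amenability of the resulting subrelation follows from Zimmer's theorem (an essentially free action of a non-amenable group cannot generate a $\mu$-amenable relation); your Maharam-extension detour is unnecessary for this, since Zimmer's criterion does not require an invariant measure. Combined with heredity of $\mu$-amenability under passing to subrelations --- the one ingredient of your proposal that is both correct and used in the paper --- this yields the lemma.
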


As an immediate consequence, we obtain the following:

\begin{cor}
$E_S^{\mathbf{S}^1}$ is a non $\mu$-amenable equivalence relation.
\end{cor}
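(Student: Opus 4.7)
The plan is to deduce this from the preceding lemma by means of the embedding $\phi\colon G_0 \hookrightarrow S$ constructed in Lemma \ref{G0subgroup}, together with two standard facts from the theory of countable Borel equivalence relations: (i) $\mu$-amenability of a countable Borel equivalence relation depends only on the measure class of $\mu$, and (ii) a Borel subrelation of a $\mu$-amenable equivalence relation is itself $\mu$-amenable (equivalently, passing to the restriction on a positive-measure set cannot destroy amenability).

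First I would fix the subinterval $I \subset \mathbf{S}^1$ whose preimage under the semiconjugacy $\Phi$ consists of binary sequences beginning with $10$. Examining the explicit formula $\phi(g y_{s_1}^{t_1} \cdots y_{s_n}^{t_n}) = y_{110}^{-t}\,(\mathbf{g}_{10}\, y_{10 s_1}^{t_1} \cdots y_{10 s_n}^{t_n})$, the factor $y_{110}^{-t}$ is supported off $I$, and the remaining factor acts on $I$ as the pushforward of the standard $G_0$-action on $\mathbf{S}^1$ under the piecewise projective homeomorphism $\Psi\colon \mathbf{S}^1 \to I$ corresponding to the Cantor-space map $\xi \mapsto 10\xi$. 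Consequently, the orbit equivalence relation $E_{\phi(G_0)}^{\mathbf{S}^1}$ restricted to $I$ is the image of $E_{G_0}^{\mathbf{S}^1}$ under $\Psi$.

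Next I would observe that $\Psi$, being piecewise projective with only finitely many breakpoints, preserves the Lebesgue measure class: on each of its affine-projective pieces its derivative is bounded above and bounded away from zero, so $\Psi_*\mu$ and $\mu|_I$ are mutually absolutely continuous. Combined with fact (i), this transports the non $\mu$-amenability of $E_{G_0}^{\mathbf{S}^1}$ supplied by the preceding lemma into non $(\mu|_I)$-amenability of $E_{\phi(G_0)}^{\mathbf{S}^1}|_I$.

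Finally, because $\phi(G_0) \leq S$ inside $\mathrm{Homeo}^+(\mathbf{S}^1)$, we have the inclusion $E_{\phi(G_0)}^{\mathbf{S}^1}|_I \subseteq E_S^{\mathbf{S}^1}|_I$. Applying fact (ii), $E_S^{\mathbf{S}^1}|_I$ is non $(\mu|_I)$-amenable, and since $\mu(I) > 0$ the entire relation $E_S^{\mathbf{S}^1}$ is non $\mu$-amenable. The only substantive point in the argument is the measure-class claim for $\Psi$, but this is the routine observation that piecewise projective maps have uniformly controlled derivatives away from the finite breakpoint set, which is a null set for Lebesgue measure.
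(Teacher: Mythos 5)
Your argument is correct, but it takes a more roundabout route than the paper. The paper's proof is two lines: it asserts the containment of relations $E_{G_0}^{\mathbf{S}^1}\subseteq E_S^{\mathbf{S}^1}$ directly and invokes the fact that a subrelation of a $\mu$-amenable relation is $\mu$-amenable. That containment holds for a concrete reason the paper leaves implicit: since $10$ and $110$ are independent, the supports of $y_{10}$ and $y_{110}$ are disjoint, so the generator $y_{10}$ of $G_0$ agrees with the generator $y_{10}y_{110}^{-1}$ of $S$ on the support of $y_{10}$ and is the identity elsewhere; hence every generator of $G_0$ (the others lying in $F\subset T\subset S$) moves each point within its $S$-orbit, and $E_{G_0}^{\mathbf{S}^1}\subseteq E_S^{\mathbf{S}^1}$ with no change of measure needed. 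You instead work with the conjugated copy $\phi(G_0)\leq S$ from Lemma \ref{G0subgroup} and transport non-amenability through the homeomorphism $\Psi$, which forces you to verify that $\Psi$ preserves the Lebesgue measure class. That extra step is sound in substance, but your justification is slightly off: the two projective pieces of $\Psi$ are, in the affine chart, $t\mapsto \frac{1+t}{2+t}$ on $[0,\infty]$ and $t\mapsto \frac{1}{2-t}$ on $[-\infty,0]$, and the derivative of the first tends to $0$ as $t\to\infty$, so it is \emph{not} bounded away from zero on its piece. The correct (and still routine) statement is that M\"obius maps are locally Lipschitz away from their single pole and so carry null sets to null sets in both directions, which gives the mutual absolute continuity you need. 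With that repair your proof is complete; its only real cost relative to the paper's is this measure-class bookkeeping, while its benefit is that it uses only the abstract embedding $\phi$ rather than the pointwise agreement of generators described above.
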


\begin{proof}
$E_{G_0}^{\mathbf{S}^1}\subset E_S^{\mathbf{S}^1}$ since $G_0<S$.
Non $\mu$-amenability of a subequivalence relation implies non $\mu$-amenability for the equivalence relation (See \cite{KechrisMiller} for instance).
\end{proof}

Compare this to the following:

\begin{lem}
Let $H$ be any finitely generated group of piecewise linear homeomorphisms of the circle.
Then $E_H^{\mathbf{S}^1}$ is an amenable equivalence relation.
\end{lem}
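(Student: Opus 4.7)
The plan is to deduce $\mu$-amenability of $E_H^{\mathbf{S}^1}$ from hyperfiniteness on a conull set, via the Connes--Feldman--Weiss theorem. The key structural observation is that, since $H$ is finitely generated by piecewise linear homeomorphisms, all slopes of all elements of $H$ lie in a finitely generated (hence abelian, hence amenable) multiplicative subgroup $\Lambda \leq \mathbf{R}_+^*$, generated by the slopes appearing in a finite generating set. Moreover, the set $B$ of break points of elements of $H$ equals $H \cdot B_0$ for the finite break set $B_0$ of that generating set, hence is countable and $\mu$-null.

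On the conull set $X = \mathbf{S}^1 \setminus B$, the derivative $c(h,x) := h'(x) \in \Lambda$ is everywhere defined, satisfies the cocycle identity $c(gh,x) = c(g, h \cdot x)\, c(h,x)$, and coincides with the Radon--Nikodym derivative of $h$ with respect to $\mu$. I would then form the Mackey skew product $Y = X \times \Lambda$ with $H$-action $h \cdot (x,t) = (h \cdot x,\, c(h,x)\, t)$, preserving the $\sigma$-finite measure $\nu = \mu \times m_\Lambda$, where $m_\Lambda$ is the counting measure on the countable group $\Lambda$. By standard reduction theory (see Kechris--Miller), $\mu$-amenability of $E_H^{\mathbf{S}^1}$ follows from $\nu$-amenability of the orbit equivalence relation $\widetilde{E}$ on $Y$, since the fiber group $\Lambda$ is itself amenable.

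Finally, I would verify hyperfiniteness of $\widetilde{E}$ by exhibiting it as an increasing union of finite subequivalence relations. Fixing a F{\o}lner exhaustion $F_n \nearrow \Lambda$ (available because $\Lambda$ is a finitely generated abelian group) and a word-length $|\cdot|$ on $H$, the relations $\widetilde{E}_n$ declaring $(x,t) \sim (h \cdot x,\, c(h,x)\, t)$ whenever $|h| \leq n$ and both $t, c(h,x)\, t$ lie in $F_n$ are finite, nested, and exhaust $\widetilde{E}$ modulo $\nu$-null sets. The main obstacle is the reduction step: one must carefully pass through the Mackey range construction to confirm that $\mu$-amenability of the base relation genuinely follows from $\nu$-amenability of the skew product when the fibers are amenable. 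An alternative, more direct route would be to invoke the theorem (in the spirit of Zimmer or Adams--Elliott--Giordano) that any countable quasi-invariant action whose Radon--Nikodym cocycle factors through an amenable group has amenable orbit equivalence relation, which applies immediately here with target $\Lambda$.
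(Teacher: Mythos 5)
Your argument has a genuine gap, and the underlying strategy cannot be repaired: amenability of an orbit equivalence relation is not controlled by the range of the Radon--Nikodym cocycle. The cleanest counterexample to your ``alternative route'' is a free measure-preserving action of a nonabelian free group: the Radon--Nikodym cocycle is identically $1$, so it factors through the trivial group, yet the orbit relation is nonamenable. The same example breaks the skew-product computation. With trivial cocycle the skew product relation is just $E\times\Delta_\Lambda$, and your relations $\widetilde{E}_n$ --- after passing to the generated equivalence relation, which you must do to get transitivity --- have classes containing $\{(x',t): x'\in[x]_{E_n}\}$, where $E_n$ is the relation generated by the ball of radius $n$ in $H$; for $n\geq 1$ this is already all of $E$, so the classes are infinite and $\widetilde{E}_n$ is not a finite equivalence relation. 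Since nothing in your construction uses the piecewise linear structure beyond the slopes, if it worked it would prove that every finitely generated nonsingular action whose derivative cocycle lands in an amenable group is hyperfinite, which is false.

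The property of PL homeomorphisms that actually does the work is not that the slopes form an amenable group, but that every element of $H$ agrees, off the countable set of breakpoints, piecewise with elements of the affine group. Hence $E_H^{\mathbf{S}^1}$ is a subequivalence relation of $E_A$, where $A$ is the subgroup of the affine group generated by the finitely many affine pieces of a finite generating set of $H$. This $A$ is a finitely generated subgroup of a metabelian group, hence amenable, so $E_A$ is $\mu$-amenable (orbit relations of nonsingular actions of amenable groups are amenable, by Connes--Feldman--Weiss), and subrelations of $\mu$-amenable relations are $\mu$-amenable. This is the paper's two-line proof. Your observation that the slopes lie in a finitely generated abelian subgroup of $\mathbf{R}_+^*$ is a shadow of this: it records the image of $A$ in the quotient of the affine group by the translations, but discards the translation part, and with it the containment $E_H^{\mathbf{S}^1}\subseteq E_A$ that makes the argument go.
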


\begin{proof}
This is an immediate consequence of the fact that the affine group is metabelian and 
hence the actions of its finitely generated subgroups produce an amenable equivalence relation.
The equivalence relation given by a finitely generated group of piecewise linear homeomorphisms of the circle
is a subequivalence relation of one given by a finitely generated affine group.
\end{proof}

We now provide another proof of Corollary \ref{nonisom}.
Every group in the family $$\{T,T_n,T(l,A,p), S\}$$
acts on the circle in a \emph{locally dense} fashion.
As a consequence, Rubin's theorem holds. 
(We refer the reader to Section $9$ of \cite{Brin} for the statement of this theorem, as well as for the definition of locally dense).
If $S$ were isomorphic to another group in the family,
then by the theorem, the two actions would be topologically conjugate.
This is impossible, since the $\mu$-amenability of the equivalence relation is preserved under topological conjugation.
\section{Open questions}

This example was originally intended as a test case for the following open question (see Question $7.5$ in \cite{BekkaHarpeValette}).

\begin{question}
Is there an infinite group of homeomorphisms of the circle with Property $(T)$?
\end{question}

Navas showed that an infinite group acting by $C^{1+\alpha}$-diffeomorphisms of the circle cannot have property (T),
if $\alpha>\frac{1}{2}$.
(See Theorem $5.2.14$ in \cite{Navas}, for instance.)
Since $S$ cannot act by $C^1$ diffeomorphisms of the circle, the criterion of Navas \cite{Navas} cannot apply.

However, Yves de Cornulier and Nicol\'{a}s Matte Bon have pointed out that $S$ cannot have property $(T)$.
They provide an elementary argument, which we only briefly sketch below.
For a piecewise $C^1$- homeomorphism $f:\mathbf{S}^1\to \mathbf{S}^1$ and $x\in \mathbf{S}^1$,
we denote by $f_+^{'}(x), f_-^{'}(x)$ as respectively the right and left derivatives of $f$ at $x$.
The map $$b:S\to l^2(\mathbf{Q}\cup \{\infty\})\qquad f\to (\textup{log}\frac{f_+^{'}(x)}{f_-^{'}(x)})_{x\in \mathbf{Q}\cup \{\infty\}} $$
is then a non trivial cocycle that provides an affine isometric action of $S$ on $l^2(\mathbf{Q}\cup \{\infty\})$ without a global fixed point.
It is interesting to note that the image of the subgroup $T$ under this cocycle is trivial, since the action of $T$ is $C^1$.
This gives rise to the related question.

\begin{question}
Does the pair $(S,T)$ satisfy relative property $(T)$?
\end{question}

Another direction of inquiry is a question concerning rank of finitely presented simple groups.
A consequence of the classification of finite simple groups is that finite simple groups are always generatable by two elements.
It is natural to inquire the following:

\begin{question}
Is there a finitely presented infinite simple group which cannot be generated by two elements?
\end{question}

Recall that $T$ is generatable by two elements.
It follows that $S$ is generatable by three elements, since $S\cong \langle T, {\bf s}\rangle$.
The question ``\emph{is there a finitely generated infinite simple group which cannot be generated by two elements?}'' was stated in the Kourkova notebook \cite{Kourkova},
and it was solved by Guba in the affirmative \cite{Guba}.
In this context, we ask the following:

\begin{question}
Is $S$ generatable by $2$ elements?
What is the smallest number of relations needed to provide a presentation for $S$? 
\end{question}

\end{document}